\newtheorem{theorem}{Theorem}
\numberwithin{theorem}{section}
\newtheorem{lemma}[theorem]{Lemma}
\newtheorem*{claim*}{Claim}
\newtheorem{proposition}[theorem]{Proposition}
\newtheorem*{question*}{Question}
\newtheorem*{theorem*}{Theorem}
\theoremstyle{remark}
\newtheorem{remark}[theorem]{Remark}
\newtheorem*{remark*}{Remark}
\theoremstyle{definition}
\newtheorem{definition}[theorem]{Definition}
\newtheorem{notation}[theorem]{Notation}
\newtheorem*{warning*}{Warning}
\newtheorem*{convention*}{Convention}
\newtheorem*{example*}{Example}
\newcommand{\Id}{\text{Id}}
\newcommand{\Aut}{\text{Aut}}
\newcommand{\id}{\text{id}}
\newcommand{\reduc}{\mathrm{red}}
\newcommand{\pre}[2]{\prescript{}{#1}{ #2}}
\DeclareMathOperator{\Stab}{Stab}
\DeclareMathOperator{\CAT}{CAT(0)}
\newcounter{mcomments}
\title{The fundamentals of cubical isometry groups}
\author{Merlin Incerti--Medici}
\address{Universit\"at Wien, Austria}
\email{merlin.medici@gmail.com}
\begin{document}
\maketitle

\begin{abstract}
We develop the fundamental theory to study cubical isometry groups as totally disconnected, locally compact groups. We show how cubical isometries are determined by their local actions and how this can be applied in explicit constructions. These results are closely related to some of the authors recent work on cubical isometries. We reformulate and generalize these previous results in a way that is necessary and more suited for upcoming applications.



\end{abstract}

\setcounter{tocdepth}{1}
\tableofcontents

\section{Introduction} \label{sec:introduction}

\subsection{History and motivation}

Let $S$ be a compact, special, non-positively curved cube complex and denote its universal covering by $X$. 
Let $Y$ be the universal covering of a compact, special, non-positively curved cube complex as well. We will refer to spaces like $X$ and $Y$ as $\CAT$ cube complexes with special, compact quotient. 
In this article, we develop the basic techniques and results used to study cubical isometries $X \rightarrow Y$. To do so, we also cover the basic theory of isomorphisms between rooted, self-similar trees. Putting $X = Y$, these techniques can be applied to the group $\Aut(X)$ of automorphisms on $X$, that is, the group of cubical isometries on $X$. We equip $\Aut(X)$ with the compact-open topology that stems from the $\CAT$-metric on $X$.

There are many different classes of locally finite combinatorial geometric objects whose automorphism groups have been a subject of interest over the last few decades. These classes include regular trees \cite{Tits70}, negatively curved polyhedral complexes \cite{HaglundPaulin98}, right-angled buildings \cite{Caprace14, MedtsSilvaStruyve18, CapracedeMedts24}, self-similar trees \cite{HartnickMedici23a}, and $\CAT$ cube complexes with special, compact quotient \cite{HartnickMedici23b}. These classes all share that their automorphism groups, when equipped with the compact-open topology, are totally disconnected, locally compact (tdlc) groups. Furthermore, if the combinatorial object is sufficiently symmetric, then its automorphism group is frequently non-discrete and can be described rather explicitly. This is of interest for several reasons:

Let $\Lambda$ denote the fundamental group of $S$. Our assumptions imply that $\Lambda$ is special in the sense of Haglund and Wise {\cite[Lemma 2.6]{HaglundWise07}} (see Section \ref{sec:cubical.labeling.equivalent.to.special}). Special groups have been of significant importance in geometric group theory and low-dimensional topology in recent decades because, among other reasons, they have been involved in the solution of conjectures concerning $3$-manifolds and one-relator groups \cite{Agol13, BaumslagFineRosenberger19, Wise21}. By definition, $X$ admits a free, cocompact, cubical action by $\Lambda$ and thus $\Lambda$ embeds as a discrete and cocompact subgroup into $\Aut(X)$. In other words, for any special group, $\Aut(X)$ is a lattice envelope, that is, a locally compact group that contains $\Lambda$ as a lattice. The concept of a lattice envelope was introduced by Furstenberg \cite{Furstenberg67} when he was studying to what extent a lattice envelope is determined by the lattice. A central question in the study of lattice envelopes is the following: Given a group $\Lambda$, can we describe all its lattice envelopes up to an appropriate equivalence? In particular, when does a group $\Lambda$ have a unique lattice envelope? A famous example of unique lattice envelopes is known as Mostow's Strong Rigidity (see \cite{Mostow73}). For a detailed discussion on lattice envelopes and the problem of finding all lattice envelopes for a given group $\Lambda$, see \cite{BaderFurmanSauer20}. With these questions in mind, the study of $\Aut(X)$ becomes interesting, as it provides a natural way to embed a special group $\Lambda$ into a, frequently non-discrete, tdlc group, providing a natural lattice envelope whose relationship to $\Lambda$ we can investigate.

A second motivation is that non-discrete tdlc groups form one of three important classes of locally compact groups: Lie groups, discrete groups, and non-discrete tdlc groups. The theory of locally compact groups can be broken down into the study of these three classes (see \cite{Castellano23}) which have all been studied extensively with varying results. When it comes to non-discrete tdlc groups, a lot of focus was put on the study of certain subclasses like linear groups over local fields and automorphism groups of certain buildings. Regarding general theory, after a period of stagnation, recent years have brought some interesting developments, like the introduction of scale-functions and the structure lattice. These developments suggest that a general structure theory about non-discrete tdlc groups may be attainable \cite{CapraceReidWillis17a, CapraceReidWillis17b}. The automorphism groups of $\CAT$ cube complexes with special, compact quotient are, as we will see, a class of examples that can be described very explicitly. Apart from being interesting in their own right, this makes them excellent examples to consider under the lens of this new general theory of non-discrete tdlc groups.\\

In \cite{HartnickMedici23a, HartnickMedici23b}, Hartnick and the author developed a theory for the study of automorphism groups of $\CAT$ cube complexes with special, compact quotient. Our focus in those works was on normal cube paths and the analogy to self-similar groups. When the author of this paper continued his work on cubical isometries with an outlook towards tdlc groups, it turned out that the setup of the theory was not ideal and some of the established results were not sufficiently general to deal with key questions the author was working on. This paper exists to remedy this. Its objective is to compile the fundamental technical results in a systematic framework that will be useful for several upcoming works of the author. Furthermore, the author uses the opportunity to create a self-contained write-up of the basic results that can serve as a point of reference for anyone intending to study cubical isometry groups.

While there are many small changes to the setup and formulation of the theory compared to the previous work, there are two changes and one generalisation that are at the core of this paper. We briefly describe these three points and their consequences. First, the previous papers were reliant on the notion of normal cube paths, a type of paths in $\CAT$ cube complexes. In this paper, we develop the theory using edge-paths instead of normal cube paths. This cuts the length and complexity of every geometric argument in half, as we no longer have to deal with diagonals. It also allows us to rely more on established results about cube complexes, as edge-paths are far more commonly studied than paths that include diagonals. In addition, it simplifies the notation for two important trees that we will introduce below.

Second, the definition of cubical portraits (Definition \ref{def:cubical.portrait.on.subtree}) is less convoluted than its counterpart in \cite{HartnickMedici23b}. This change also simplifies the formulation of Theorems \ref{thm:characterization.of.cubical.portraits} and \ref{thm:canonical.portrait.extension} and allows us to restructure the proofs of these two theorems to extract useful substeps. In particular, we obtain Lemma \ref{lem:moves.are.carried.over}, which is useful for upcoming work of the author concerning the extension of partially defined isometries. We also highlight that the definition of cubical portraits has been generalised to subtrees, a situation that has not been considered at all in \cite{HartnickMedici23b} and that is again highly relevant for upcoming work concerning the extension of partially defined isometries.

Finally, 
we generalise the results obtained in \cite{HartnickMedici23a, HartnickMedici23b} in the following way: Several key technical results of the previous papers concern cubical isometries from $X$ to itself that have a fixed point. In this paper, we generalise most of these results to cubical isometries $X \rightarrow Y$ between cube complexes. In particular, the requirement of having a fixed point is removed. Similarly, we generalise the previous theory of rooted tree-automorphism on self-similar trees to a theory of rooted tree-isomorphisms. We do this because of a third motivation that stems from universal groups in the sense of Burger and Mozes. Intuitively, given a subgroup $F$ of the permutation group of $N$ elements, the universal group of $F$ in the sense of Burger and Mozes is the group of automorphisms on the $N$-regular tree whose local action at every vertex has to lie in $F$ {\cite[Section 3]{BurgerMozes00}}. As we will see, the approach to cubical isometries presented in this paper points very clearly at a notion of universal groups that has been adapted to the context of cubical isometries. However, such a notion requires describing cubical isometries in terms of their local actions regardless of whether they have a fixed point or not (since elements of universal groups generally do not have a fixed point). To prepare for future work in this direction, we generalise the theory to cubical isometries that may not have fixed points in this paper.\\ 


The approach to $\Aut(X)$ that we present in this article relies on encoding cubical isometries as isomorphisms between rooted trees. More specifically, let $g \in \Aut(X)$ and choose a vertex $o \in X$. We may now consider the space of edge-paths in $X$ that start at $o$. These edge-paths naturally carry the structure of a tree that we denote by $T_o^{edge}$. The automorphism $g$ thus induces an isomorphism of rooted trees $T_o^{edge} \rightarrow T_{g(o)}^{edge}$. Since cubical isometries are uniquely determined by their action on the $1$-skeleton, this isomorphism of rooted trees fully determines the cubical isometry. We will encode elements of $\Aut(X)$ in this way. To do so, we need to establish some foundations of rooted tree isomorphisms, which we briefly discuss in the next subsection.

\subsection{Rooted tree isomorphisms of self-similar trees}

Let $\Gamma$ be a finite, oriented graph and $o$ a marked vertex in $\Gamma$. The set of oriented edge-paths in $\Gamma$ that start at $o$ carries the structure of a rooted tree $T_{\Gamma, o}$, where the root is the path that stays at $o$. A tree is called {\it self-similar} if it is the path-space of such a pair $(\Gamma, o)$. Self-similar trees have appeared in several subjects, ranging from probability theory over spectral graph theory to geometric group theory. They have also appeared under several names, notably as periodic trees \cite{Lyons90}, as trees with finitely many cone types \cite{NagnibedaWoess02, KellerLenzWarzel12b, KellerLenzWarzel14}, and as self-similar trees \cite{BelkBleakMatucci21, HartnickMedici23a}. We primarily need a theory of isomorphisms between self-similar trees, whose basics have been developed in \cite{HartnickMedici23a}.

Let $\Sigma$ be a finite set and let $\Sigma^*$ denote the set of all word over the alphabet $\Sigma$. Given a labeling of the edges in $\Gamma$ by elements of $\Sigma$, we can lift this labeling to an edge-labeling on the tree $T_{\Gamma,o}$. This edge-labeling allows us to identify every path $\gamma$ in $\Gamma$ that starts at $o$ with the word $v \in \Sigma^*$ that is spelled out by the labels along the trajectory of $\gamma$. In particular, we can identify vertices of $T_{\Gamma,o}$ with certain words over $\Sigma$.

The edge-labeling induced on $T_{\Gamma,o}$ has the property that for any vertex $v$ in $T_{\Gamma,o}$, now seen as a word over $\Sigma$, the path from the root to $v$ spells out the word $v$ (see Section \ref{sec:trees} for details). We may thus think of a word $v$ both as a vertex in $T_{\Gamma,o}$ and as the shortest edge-path in $T_{\Gamma,o}$ from the root to $v$. The path that stays at $o$ is represented by the empty word, denoted $\epsilon$. We thus denote the root of $T_{\Gamma, o}$ by $\epsilon$.

\begin{definition}
	Let $T$ and $T'$ be two self-similar trees with edge-labelings induced as described above and let $\Sigma$ and $\Sigma'$ denote the respective set of labels. A {\it portrait on $T$ to $T'$} is a family $(\pre{v}{\sigma})_{v \in V(T)}$ of injective maps $\pre{v}{\sigma} : \Sigma_v \hookrightarrow \Sigma'$, (spoken ``$\sigma$ at $v$'') where $V(T)$ denotes the set of vertices of $T$.
\end{definition}

Given a portrait on $T$ to $T'$, we define the map
\[ \sigma: V(T) \rightarrow \Sigma'^* \]
\begin{equation} \label{eq:intro.equation}
	s_1 \dots s_n \mapsto \pre{\epsilon}{\sigma}(s_1) \dots \pre{s_1 \dots s_{i-1}}{\sigma}(s_i) \dots \pre{s_1 \dots s_{n-1}}{\sigma}(s_n).
\end{equation}

The map $\sigma$ induces an injective morphism of rooted trees if and only if its image is contained in the set $V(T')$. Theorem \ref{thm:characterising.tree.isomorphisms} provides a characterisation of such portraits in terms of a semi-local condition at every vertex $v \in V(T)$. 
%
%
%
This characterisation allows us to construct rooted tree isomorphisms between self-similar trees by constructing portraits that satisfy this semi-local condition. It serves as a basis to study automorphism groups of self-similar trees and generalises ideas used in the study of automorphisms of regular rooted trees. It is an interesting open question to what extent the theory of automorphisms of regular rooted trees -- especially the theory of self-similar groups -- can be expanded to self-similar trees.

\subsection{Cubical isometries}

We return to a $\CAT$ cube complex $X$ with a special, free, and cocompact action $\Lambda \curvearrowright X$. Our first step is to relate cubical isometries on $X$ to rooted tree isomorphisms between self-similar trees. For any vertex $o$ in $X$, consider the set of all edge-paths in $X$ that start at $o$. This set carries the structure of a tree that we denote by $T_o^{edge}$. As we will see in Lemma \ref{lem:regular.language}, this tree is self-similar. Since any cubical isometry $g \in \Aut(X)$ is determined by its action on the vertices, it is certainly determined by the rooted tree isomorphism $T_o^{edge} \rightarrow T_{g(o)}^{edge}$ that it induces. Note that the vertex $o$ can be chosen arbitrarily here. If $g$ has a fixed point, it is generally useful to pick $o$ to be such a fixed point as $g$ then induces an automorphism $T_o^{edge} \rightarrow T_o^{edge}$.

An oriented edge in $X$ is an edge together with a chosen orientation. We denote the set of oriented edges in $X$ by $\overrightarrow{\mathcal{E}}(X)$ and the set of oriented edges starting at a vertex $o$ by $\overrightarrow{\mathcal{E}}(X)_o$. Given an oriented edge $e$, we write $e^{-1}$ to denote the edge obtained by reversing orientation. Two oriented edges are called parallel, if they cross the same hyperplane ``in the same direction'' (see Section \ref{subsec:edge.paths}). We now label the oriented edges of a cube complex in a way that is sensitive to the geometry of the cube complex. This labeling induces a labeling on the tree $T_o^{edge}$ with which we will describe our rooted tree isomorphisms. The following definition describes the type of edge-labeling that we will work with.

\begin{definition}
	A {\it cubical edge-labeling} on $X$ is a surjective map $\ell : \overrightarrow{\mathcal{E}}(X) \rightarrow \Sigma$ satisfying the following properties.
	\begin{enumerate}
		\item For every vertex $o \in X^{(0)}$, the restriction $\ell \vert_{\overrightarrow{\mathcal{E}}(X)_o}$ is injective.
		
		\item If $e, e' \in \overrightarrow{\mathcal{E}}(X)$ are parallel oriented edges, then $\ell(e) = \ell(e')$.
				
		\item Let $o, o' \in X^{(0)}$, let $e, f \in \overrightarrow{\mathcal{E}}(X)_o$, let $e', f' \in \overrightarrow{\mathcal{E}}(X)_{o'}$ such that $\ell(e) = \ell(e')$ and $\ell(f) = \ell(f')$. Then $e$ and $f$ span a square if and only if $e'$ and $f'$ do.
		
		\item There exists a fixed-point free involution $\cdot^{-1} : \Sigma \rightarrow \Sigma$ such that for every $e \in \overrightarrow{\mathcal{E}}(X)$, $\ell(e^{-1}) = \ell(e)^{-1}$.

	\end{enumerate}
	
	We call a cubical edge-labeling {\it $\Lambda$-invariant} if for all $g \in \Lambda$ and all $e \in \overrightarrow{\mathcal{E}}(X)$, we have $\ell(e) = \ell(g(e))$.
\end{definition}

Given a cubical edge-labeling and a vertex $v \in T_o^{edge}$, we write $\Sigma_v^{edge}$ for the set of labels appearing on the outwards pointing edges at $v$ in the tree $T_o^{edge}$. The main result about the existence of cubical edge-labelings is Theorem \ref{thm:characterising.existence.of.cubical.edge.labelings} which states that $\Lambda$-invariant cubical edge-labelings exist if and only if $X$ is the universal covering of a compact, non-positively curved, special cube complex with fundamental group $\Lambda$.


Cubical edge-labelings have some history. They appeared under the name {\it special colorings} in \cite{Genevois21} where their relationship to special cube complexes is discussed(cf.\,{\cite[Theorem 4.1]{Genevois21}}). Another application of cubical edge-labelings can be found in \cite{Fioravanti24} which is concerned with the automorphism groups of right-angled Artin groups. While it is important to distinguish the automorphism group of a RAAG from the automorphism group of its Salvetti-complex (neither of which contains the other in general), it is perhaps not surprising that cubical edge-labelings are a useful tool to study either.

Theorem \ref{thm:characterising.existence.of.cubical.edge.labelings} can also be thought of as a continuation of the work in \cite{CrispWiest04}, where similar ideas of edge-labelings were used to embed graph braid groups into right-angled Artin groups. The idea of such embeddings leads directly to the original definition of special cube complexes \cite{HaglundWise07}. Theorem \ref{thm:characterising.existence.of.cubical.edge.labelings} shows that the relation between cubical edge-labelings and specialness persists into current terminology.
\\

Suppose now we have two locally finite $\CAT$ cube complexes $X$ and $Y$ with cocompact cubical actions $\Lambda \curvearrowright X$ and $\Lambda' \curvearrowright Y$ such that there exist $\Lambda$- and $\Lambda'$-invariant cubical edge-labelings on $X$ and $Y$ respectively. Let $\Sigma$ and $\Sigma'$ denote the set of labels appearing in $X$ and $Y$ respectively. We define the set $\Sigma'^{edge}_{v}$ in analogy to the set $\Sigma^{edge}_v$ above.

Let $g : X \rightarrow Y$ be a cubical isometry and consider the rooted tree isomorphism $T_o^{edge} \rightarrow T'^{edge}_{g(o)}$ that it induces. This rooted tree isomorphism can be encoded as a portrait $(\pre{v}{\sigma(g)})_{v \in V(T_o^{edge})}$. We say that a portrait {\it induces a cubical isometry} if it is induced by a cubical isometry. The fundamental result about cubical isometries and portraits is Theorem \ref{thm:characterization.of.cubical.portraits} which gives a characterisation of portraits that induce cubical isometries. To state this result, we need some additional definitions. We give simplified definitions here in the introduction. They can be found in their full generality in Section \ref{sec:describing.cubical.isometries}.

\begin{definition} 
	Let $s, t \in \Sigma$. We say that $s$ and $t$ {\it commute} if there exists a vertex $o$ such that $s$ and $t$ are labels of outgoing edges at $o$ and these edges span a square. We write $[s,t] = 1$ if $s$ and $t$ commute and $[s,t] \neq 1$ if they do not commute. 
\end{definition}

By property (3) of cubical edge-labelings, commutation does not depend on the vertex $o$. For our next definition, we need the following notation: Given a vertex $v$ in $T_o^{edge}$, let $v^{\circ}$ denote the endpoint of the edge-path in $X$ corresponding to $v$.

\begin{definition}
	
	A portrait $(\pre{v}{\sigma})_{v \in V(T_o^{edge})}$ on $T_o^{edge}$ is called a {\it cubical portrait on $T_o^{edge}$} if it satisfies the following conditions:
	\begin{enumerate}
		\item[$tree$] For all $v \in V(T_o^{edge})$, we have
		\[ \pre{v}{\sigma}( \Sigma_v^{edge} ) = \Sigma'^{edge}_{ \sigma(v) },  \]
		
		\item[$comm$] For all $v \in V(T_o^{edge})$, the bijection $\pre{v}{\sigma} : \Sigma_v^{edge} \rightarrow \Sigma'^{edge}_{\sigma(v)}$ preserves commutation, that is $[s,t] = 1 \Leftrightarrow [\pre{v}{\sigma}(s), \pre{v}{\sigma}(t)] = 1$ for all $s, t \in \Sigma_v^{edge}$. (We write $[s,t] = [\pre{v}{\sigma}(s), \pre{v}{\sigma}(t)]$.)
		
		\item[$par$] For all $v \in V(T_o^{edge})$ and all $s, t \in \Sigma_v^{edge}$ such that $[s,t] = 1$, we have
		\[ \pre{v}{\sigma}(t) = \pre{vs}{\sigma}(t). \]
		
		\item[$inv$] For all $v \in V(T_o^{edge})$ and all $s \in \Sigma_v^{edge}$, we have
		\[ \pre{vs}{\sigma}(s^{-1}) = \pre{v}{\sigma}(s)^{-1}. \]
		
		\item[$end$] For all $v, w \in V(T_o^{edge})$ such that $v^{\circ} = w^{\circ}$, we have $\sigma_v = \sigma_w$.
	\end{enumerate}
\end{definition}

Our fundamental result, Theorem \ref{thm:characterization.of.cubical.portraits}, states that a portrait on $T_o^{edge}$ induces a cubical isometry if and only if it is cubical. Therefore, we can construct cubical isometries by producing cubical portraits on $T_o^{edge}$. In fact, as we will see in Theorem \ref{thm:canonical.portrait.extension}, there is a self-similar subtree $T_o^{red} \subset T_o^{edge}$ such that it is sufficient to define a cubical portrait on the vertices of $T_o^{red}$. This allows us to do some very explicit constructions that have been used to prove a variety of results. We see one application of this in section \ref{sec:topological.generators.for.stabilizers}, where we construct a topologically generating set of the stabilizer group $\Stab_{\Aut(X)}(o)$ and a finite topologically generating set of $\Aut(X)$ (see Theorem \ref{thm:choice.of.A.generates.stabilizer} and Theorem \ref{thm:fin.top.gen.set}).

	

\subsection{Related works}

There are various classes of polyhedral complexes for which more or less expansive theories of their automorphism groups have been developed. There are three that are important to mention here in relation to $\CAT$ cube complexes with special, compact quotient. First, there is the theory of automorphisms on `sufficiently symmetric' trees. (Sufficiently symmetric usually means that the tree can be described in terms of a finite graph, perhaps with some additional information.) Some of the earliest work on this is due to Tits in \cite{Tits70}, which serves as the basis for many of the works on automorphisms of combinatorial complexes. More recently, \cite{ReidSmith22} studied tree-automorphisms from the viewpoint of local actions and developed a theory closely related to the one developed in \cite{HartnickMedici23b} for the study of cubical isometries. The key difference between the two theories is that Reid and Smith work with trees obtained as universal coverings of finite unoriented graphs, while Hartnick and the author work with self-similar trees, which can be viewed as universal coverings of finite {\bf oriented} graphs. The author suspects that the two theories can be unified. However, since the automorphisms of cube complexes require the tracking of oriented edges, the author has decided to focus on self-similar trees in this article.

Second, there is the theory of automorphisms of buildings. Like the theory on cubical isometries, this theory has grown out of the work of Tits. The two theories overlap on right-angled buildings, which are both buildings and $\CAT$ cube complexes (and automorphisms with respect to either structure are automorphisms with respect to both). There is a significant body of literature on the subject with the following two being perhaps the most illuminating regarding the connection to the theory of cubical isometries \cite{Caprace14, CapracedeMedts24}. In particular, a comparison of \cite{CapracedeMedts24} with \cite{HartnickMedici23b} raises some interesting questions about simple normal subgroups.

Third, there has been an investigation of automorphisms of graphs that admit a geometric action by a graphically discrete group \cite{MSSW23}. Since any graph-automorphism on the $1$-skeleton of a $\CAT$ cube complex has a unique extension to a cubical automorphisms, this work interacts with the work presented here whenever we consider a graph that is the $1$-skeleton of a $\CAT$ cube complex with special, compact quotient. In particular, the theories of automorphisms of buildings, graphs with a geometric action, and $\CAT$ cube complexes with special, compact quotient all overlap on the case of Salvetti-complexes. Indeed, if $X$ is a Salvetti-complex, all three theories describe the same automorphism group and address closely related questions, sometimes even recovering exactly the same results.




\subsection*{Structure of paper}

Section \ref{sec:trees} discusses self-similar trees and their isomorphisms. Section \ref{sec:describing.cubical.isometries} introduces cubical edge-labelings and cubical portraits and explains how they encode cubical isometries. In section \ref{sec:cubical.labeling.equivalent.to.special}, we show that $\Lambda$-invariant cubical edge-labelings exist if and only if the action of $\Lambda$ on $X$ is special, free, and cocompact. In section \ref{sec:topological.generators.for.stabilizers}, we prove an important result on how to obtain a set of cubical isometries that generates a dense subgroup of the stabilizer of a vertex. We then use this result to obtain a finite topologically generating set of $\Aut(X)$. Finally, some of the longer and technical proofs are postponed to the appendix.

\subsection*{Acknowledgments}

The author was supported by the FWF grant 10.55776 /ESP124. The author thanks Pierre-Emmanuel Caprace and Anthony Genevois for their comments on an earlier version of this paper.




\section{Trees} \label{sec:trees}




\subsection{Self-similar trees} \label{subsec:self.similar.trees}

Let $A$ be a finite, oriented graph, that is, all its edges have an orientation. We allow edges from a vertex to itself and multiple edges between the same vertices. Let $o$ be a marked vertex in $A$ that we call the {\it starting vertex}. We denote the set of edges in $A$ by $\mathcal{E}(A)$ and the set of outgoing edges at a vertex $v$ by $\mathcal{E}(A)_v$. Let $\Sigma$ be a finite set and let $\ell : \mathcal{E}(A) \rightarrow \Sigma$ be a surjective map such that for every vertex $v$, the restriction $\ell \vert_{\mathcal{E}(A)_v}$ is injective.

Let $\Sigma^*$ denote the set of all words over the alphabet $\Sigma$, including the empty word $\epsilon$. We define $\mathcal{L}_A \subset \Sigma^*$ to be the language of all words that are spelled out by some path of oriented edges that starts at $o$. We set the convention that the path that simply stays at $o$ spells out the empty word $\epsilon$, which is thus contained in $\mathcal{L}_A$. (In other words, we view the triple $(A, o, \ell)$ as a finite automaton where all vertices are accepting states. The language of all words accepted by this automaton is $\mathcal{L}_A$.) Since the restrictions $\ell \vert_{\mathcal{E}(A)_v}$ are all injective, distinct edge-paths starting at $o$ spell out distinct words. In particular, there is a canonical bijection between the set of edge-paths that start at $o$ and the language $\mathcal{L}_A$.

The language $\mathcal{L}_A$ carries a natural structure of a rooted tree $T_A$ which can be obtained as follows: The vertices of $T_A$ are the elements of $\mathcal{L}_A$. Its root is the vertex $\epsilon \in \mathcal{L}_A$. Finally, two vertices $v, w \in \mathcal{L}_A$ are connected by an edge if and only if there exists $s \in \Sigma$ such that either $v = ws$ or $w = vs$. We call $v$ the {\it parent vertex} of $vs$ (and, following this terminology, we obtain {\it ancestor, descendant} and {\it child vertices}). One easily verifies that the resulting graph is a locally finite tree. This tree is also called the {\it path language tree}. The following definition is from \cite{BelkBleakMatucci21}.

\begin{definition}
	A rooted tree $T$ is called {\it self-similar} if there exists a triple $(A, o, \ell)$ such that $T = T_A$.
\end{definition}

The triple $(A, o, \ell)$ induces an edge-labeling on $T_A$, where we label the edge between two vertices $v$ and $vs$ by $s$. We denote this edge-labeling by $\ell$ as well. Note that different triples $(A, o, \ell)$ may induce the same tree, but they might induce different edge labelings. Given a self-similar tree $T$, we usually choose some triple $(A, o, \ell)$ such that $T = T_A$ and use the edge-labeling on $T_A$ as a technical tool.

There is a natural projection map $p : T_A \rightarrow A$ that is obtained as follows: Identify a vertex $v$ in $T_A$ with the corresponding edge-path in $A$ that starts at $o$. We send $v$ to the endpoint of that path. One easily checks that this map on vertices extends to a map between graphs.

For every $v \in \mathcal{L}_A$, we define
\[ \Sigma_v := \{ s \in \Sigma \vert vs \in \mathcal{L}_A \}. \]
Equivalently, $\Sigma_v$ is the set of all labels in $\Sigma$ that appear on an outgoing edge at $p(v)$. For any $v \in \Sigma^* \setminus \mathcal{L}_A$, we define $\Sigma_v := \emptyset$. Similarly, we define
\[ \mathcal{L}_{A, v} := \{ u \in \Sigma^* \vert vu \in \mathcal{L}_A \}. \]
It is an easy exercise that, if $p(v) = p(w)$, the $\Sigma_v = \Sigma_w$ and $\mathcal{L}_{A,v} = \mathcal{L}_{A,w}$. Thus, there are only finitely many languages $\mathcal{L}_{A,v}$.




\subsection{Isomorphisms between self-similar trees} \label{subsec:isomorphisms.between.self.similar.trees}

Let $T_A$ and $T'_A$ be two self-similar trees induced by triples $(A, o, \ell)$ and $(A', o', \ell')$. Let $\Sigma$ and $\Sigma'$ denote the set of labels of $\ell$ and $\ell'$ respectively. (We also define $\Sigma'_v$ etc.\,in analogy to the notation defined before.) The goal of this section is to encode rooted tree-isomorphisms $T_A \rightarrow T'_A$ in terms of a family of bijections between finite sets. {\bf We highlight that, while $T_A$ and $T'_A$ both come with an edge-labeling, we are considering isomorphisms of rooted trees, that is, they do not need to be compatible with the edge-labelings on $T_A$ and $T'_A$ in any way.} Let $g : T_A \rightarrow T'_A$ be an injective rooted tree-morphism. Since $T_A$ and $T'_A$ are trees, $g$ is fully determined by its restriction to the vertices of these trees. Therefore, we may identify $g$ with an injection $g : \mathcal{L}_A \rightarrow \mathcal{L}_{A'}$ that determines $g$ uniquely.

We now define injections $\pre{v}{\sigma(g)} : \Sigma_v \rightarrow \Sigma'_{g(v)}$ by the equation
\[ g(vs) = g(v) \pre{v}{\sigma(g)}(s), \quad \forall v \in \mathcal{L}_A, \quad \forall s \in \Sigma_v. \]
Since $g$ is injective, these maps are injective and they provide us with the following formula: For every $s_1 \dots s_n \in \mathcal{L}_A$, we have
\begin{equation} \label{eq:main.formula}
	g(s_1 \dots s_n) = \pre{\epsilon}{\sigma(g)}(s_1) \dots \pre{s_1 \dots s_{i-1}}{\sigma(g)}(s_i) \dots \pre{s_1 \dots s_{n-1}}{\sigma(g)}(s_n).
\end{equation}
Furthermore, if $u_1 \in \mathcal{L}_A$ and $u_2 = t_1 \dots t_l \in \mathcal{L}_{A,u_1}$, then we define
\[ \pre{u_1}{\sigma(g)}(u_2) := \pre{u_1}{\sigma(g)}(t_1) \dots \pre{u_1 t_1 \dots t_{i-1}}{\sigma(g)}(t_i) \dots \pre{u_1 t_1 \dots t_{l-1}}{\sigma(g)}(t_l) \]
and obtain
\begin{equation} \label{eq:main.formula.extended}
	g(u_1 u_2) = g(u_1) \pre{u_1}{\sigma(g)}(u_2).
\end{equation}

Equation (\ref{eq:main.formula}) shows that we can recover the map $g : \mathcal{L}_A \rightarrow \mathcal{L}_{A'}$ from the family of bijections $(\pre{v}{\sigma(g)})_{v \in \mathcal{L}_A}$. We thus want to understand which families of bijections arise from tree-isomorphisms $T_A \rightarrow T_{A'}$.

\begin{definition}
	Let $T_A$ and $T'_A$ be two self-similar trees induced by triples $(A, o, \ell)$ and $(A', o', \ell')$. Let $\Sigma$ and $\Sigma'$ denote the set of labels of $\ell$ and $\ell'$ respectively. A {\it portrait on $T_A$ to $T'_A$} is a family $(\pre{v}{\sigma})_{v \in \mathcal{L}_A}$ of injective maps $\pre{v}{\sigma} : \Sigma_v \hookrightarrow \Sigma'$.
\end{definition}

Given a portrait $(\pre{v}{\sigma})_{v \in \mathcal{L}_A}$, we can define a map
\[ \sigma : \mathcal{L}_A \rightarrow \Sigma'^* \]
\begin{equation} \label{eq:definition.of.sigma}
	\sigma(s_1 \dots s_n ) := \pre{\epsilon}{\sigma}(s_1) \dots \pre{s_1 \dots s_{i-1}}{\sigma} (s_i) \dots \pre{s_1 \dots s_{n-1}}{\sigma} (s_n).
\end{equation}
Note that the image of $\sigma$ is a-priori not contained in $\mathcal{L}_{A'}$. The following theorem tells us when $\sigma$ induces a rooted tree-isomorphism.

\begin{theorem} \label{thm:characterising.tree.isomorphisms}
	Let $(\pre{v}{\sigma})_{v \in \mathcal{L}_A}$ be a portrait. The map $\sigma$ induces an injective morphism of rooted trees $T_A \rightarrow T'_A$ if and only if for every $v \in \Sigma_v$, we have that $\pre{v}{\sigma}( \Sigma_v ) \subseteq \Sigma'_{\sigma(v)}$.
	
	Furthermore, $\sigma$ is an isomorphism if and only if the inclusions $\pre{v}{\sigma}( \Sigma_v ) \subseteq \Sigma'_{\sigma(v)}$ are all equalities.
\end{theorem}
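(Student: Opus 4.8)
The plan is to analyze when the map $\sigma$ defined by equation (\ref{eq:definition.of.sigma}) actually lands inside $\mathcal{L}_{A'}$, and to do this by induction on word length. First I would observe that $\sigma(\epsilon) = \epsilon$ trivially, so the root is mapped correctly, and that $\sigma$ is automatically length-preserving and ``prefix-respecting'': for any $s_1 \dots s_n \in \mathcal{L}_A$, the word $\sigma(s_1 \dots s_n)$ extends the word $\sigma(s_1 \dots s_{n-1})$ by exactly one letter, namely $\pre{s_1 \dots s_{n-1}}{\sigma}(s_n)$. This is immediate from the definition. Consequently, once we know $\sigma$ maps $\mathcal{L}_A$ into $\mathcal{L}_{A'}$, it automatically sends parent vertices to parent vertices and hence defines a morphism of rooted trees; injectivity then follows from injectivity of each $\pre{v}{\sigma}$ together with the prefix structure (two distinct words in $\mathcal{L}_A$ first differ at some position $i$, and there $\pre{s_1\dots s_{i-1}}{\sigma}$ is injective, so their images differ). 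So the whole content is the containment $\sigma(\mathcal{L}_A) \subseteq \mathcal{L}_{A'}$.

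For the forward direction of the first equivalence, suppose $\sigma$ induces a tree-morphism. Then for any $v \in \mathcal{L}_A$ and any $s \in \Sigma_v$, the vertex $vs$ lies in $\mathcal{L}_A$, so $\sigma(vs) \in \mathcal{L}_{A'}$; but $\sigma(vs) = \sigma(v)\,\pre{v}{\sigma}(s)$, and membership of a word in $\mathcal{L}_{A'}$ means each successive letter is a legal outgoing label, so in particular $\pre{v}{\sigma}(s) \in \Sigma'_{\sigma(v)}$. This gives $\pre{v}{\sigma}(\Sigma_v) \subseteq \Sigma'_{\sigma(v)}$. For the converse, assume this local condition holds for every $v$, and prove by induction on $n$ that every $s_1 \dots s_n \in \mathcal{L}_A$ satisfies $\sigma(s_1 \dots s_n) \in \mathcal{L}_{A'}$. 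The base case $n = 0$ is $\sigma(\epsilon) = \epsilon \in \mathcal{L}_{A'}$. For the inductive step, write $v = s_1 \dots s_{n-1} \in \mathcal{L}_A$; by induction $\sigma(v) \in \mathcal{L}_{A'}$, and by hypothesis $\pre{v}{\sigma}(s_n) \in \Sigma'_{\sigma(v)}$, which is precisely the statement that $\sigma(v)\,\pre{v}{\sigma}(s_n) = \sigma(v s_n) \in \mathcal{L}_{A'}$ (appending an outgoing label at the endpoint keeps us in the language). Then combine with the prefix/injectivity observations above to conclude $\sigma$ induces an injective rooted tree-morphism.

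For the second equivalence, I would note that once $\sigma$ is a morphism, surjectivity onto $\mathcal{L}_{A'}$ is equivalent to $\sigma$ being an isomorphism of rooted trees (a bijective rooted tree-morphism that preserves adjacency both ways — which the prefix structure guarantees once it is onto). Surjectivity is again checked level by level: I claim $\sigma$ maps $\mathcal{L}_A$ onto $\mathcal{L}_{A'}$ if and only if for each $v$ the inclusion $\pre{v}{\sigma}(\Sigma_v) \subseteq \Sigma'_{\sigma(v)}$ is an equality. If some inclusion is strict, pick $s' \in \Sigma'_{\sigma(v)} \setminus \pre{v}{\sigma}(\Sigma_v)$; then $\sigma(v)s' \in \mathcal{L}_{A'}$ is not in the image of $\sigma$, since any preimage would have to be of the form $vs$ with $\pre{v}{\sigma}(s) = s'$, using that $\sigma$ is length-preserving and prefix-respecting and injective so that the preimage of $\sigma(v)$ is exactly $v$. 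Conversely, if all inclusions are equalities, induct on $n$ to show every word of length $n$ in $\mathcal{L}_{A'}$ is hit: given $w' t' \in \mathcal{L}_{A'}$ with $w' \in \mathcal{L}_{A'}$ of length $n-1$, by induction $w' = \sigma(v)$ for some $v \in \mathcal{L}_A$, and since $t' \in \Sigma'_{w'} = \pre{v}{\sigma}(\Sigma_v)$ there is $s \in \Sigma_v$ with $\pre{v}{\sigma}(s) = t'$, whence $\sigma(vs) = w't'$.

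I do not expect a serious obstacle here; the only mild subtlety is being careful that the local conditions are stated and used at the correct vertex, namely $\sigma(v)$ in $T'_{A'}$ rather than anything in $T_A$, and that the two inductions (one for containment in $\mathcal{L}_{A'}$, one for surjectivity) are set up cleanly. I would also take care to remark that the definition of $\Sigma_v'$ for words $v \notin \mathcal{L}_{A'}$ is $\emptyset$, so the hypothesis $\pre{v}{\sigma}(\Sigma_v) \subseteq \Sigma'_{\sigma(v)}$ already forces $\Sigma_v = \emptyset$ whenever $\sigma(v) \notin \mathcal{L}_{A'}$, which is exactly what makes the induction consistent. (There appears to be a typo in the statement — ``for every $v \in \Sigma_v$'' should read ``for every $v \in \mathcal{L}_A$'' — which I would silently correct.)
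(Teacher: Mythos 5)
Your proposal is correct and follows essentially the same route as the paper: both directions are handled by induction on word length (containment in $\mathcal{L}_{A'}$ for the morphism statement, level-by-level surjectivity for the isomorphism statement), with injectivity deduced from injectivity of the local maps $\pre{v}{\sigma}$; your extra detail on the prefix structure and on why an isomorphism forces equalities only spells out steps the paper asserts as clear. You are also right that ``for every $v \in \Sigma_v$'' in the statement is a typo for ``for every $v \in \mathcal{L}_A$''.
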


We recall that we defined $\Sigma'_{w} = \emptyset$ for all $w \in \Sigma'^* \setminus \mathcal{L}_{A'}$, making the condition in the theorem well-defined even if $\sigma(v) \notin \mathcal{L}_{A'}$.

\begin{proof}
	Let $(\pre{v}{\sigma})_{v \in \mathcal{L}_A}$ be a portrait. Following the same construction as in the definition of $\pre{v}{\sigma(g)}$ for a rooted tree-morphism, it is clear that, if $\sigma$ is an injective morphism of rooted trees, then $\pre{v}{\sigma}( \Sigma_v ) \subseteq \Sigma'_{\sigma(v)}$. (It is also clear that, if $\sigma$ is an isomorphism, then these inclusions are equalities.)
	
	For the other direction, suppose that $\pre{v}{\sigma}( \Sigma_v ) \subseteq \Sigma'_{\sigma(v)}$ for every $v \in \mathcal{L}_A$. We need to show three things: First that for every $v \in \mathcal{L}_A$, $\sigma(v) \in \mathcal{L}_{A'}$, second that $\sigma$ sends the root of $T_A$ to the root of $T'_A$, and third that $\sigma$ preserves adjacency of vertices. (The third point follows trivially from equation (\ref{eq:definition.of.sigma}) once we have proven the first.)
	
	Let $v \in \mathcal{L}_A$. We use induction over the length of $v$. By definition, $\sigma(\epsilon) = \epsilon \in \mathcal{L}_{A'}$ and thus $\sigma$ sends the root of $T_A$ to the root of $T'_A$. Now suppose $\sigma(w) \in \mathcal{L}_{A'}$ for every $w$ of length at most $n-1$ and let $v = s_1 \dots s_n \in \mathcal{L}_A$. Then
\[ \sigma(v) = \sigma(s_1 \dots s_{n-1}) \pre{s_1 \dots s_{n-1}}{\sigma}(s_n). \]
By induction assumption, $\sigma(s_1 \dots s_{n-1}) \in \mathcal{L}_{A'}$. By assumption on the portrait, $\pre{s_1 \dots s_{n-1}}{\sigma}(s_n) \in \Sigma'_{\sigma(s_1 \dots s_{n-1})}$. Thus, $\sigma(v) \in \mathcal{L}_{A'}$, which finishes the induction. Injectivity of $\sigma$ follows from injectivity of all the $\pre{v}{\sigma}$.\\

	For the final statement of the theorem, suppose that $\pre{v}{\sigma}( \Sigma_v ) = \Sigma'_{\sigma(v)}$ for all $v \in \mathcal{L}_A$. We need to show that $\sigma$ is surjective on vertices. We do so by induction. Suppose all vertices in $T'_A$ of distance $\leq n-1$ from the root $\epsilon$ lie in the image of $\sigma$ (for $n-1 = 0$, this is clear as $\sigma(\epsilon) = \epsilon$). Now let $w = t_1 \dots t_n \in \mathcal{L}_{A'}$. By induction-assumption, there exists $v_{-} = s_1 \dots s_{n-1}$ such that $\sigma(v_{-}) = t_1 \dots t_{n-1}$. Since $t_1 \dots t_{n} \in \mathcal{L}_{A'}$, we have that $t_n \in \Sigma'_{t_1 \dots t_{n-1}}$. By assumption,
	\[ \pre{v_{-}}{\sigma} ( \Sigma_{v_{-}}) = \Sigma'_{t_1 \dots t_{n-1}} \]
	and thus there exists $s_n \in \Sigma_{v_{-}}$ such that
	\[ \sigma(s_1 \dots s_n) = \sigma(v_{-}) \pre{v_{-}}{\sigma}(s_n) = t_1 \dots t_{n-1} t_n = w. \]
	By induction over $n$, it follows that $\sigma$ is surjective, which concludes the proof.
\end{proof}




\section{Describing cubical isometries as tree isomorphisms} \label{sec:describing.cubical.isometries}




\subsection{Edge paths and cubical edge-labelings} \label{subsec:edge.paths}

Let $X$ and $Y$ be locally finite $\CAT$ cube complexes and $\Lambda$ and $\Lambda'$ two finitely generated groups acting cocompactly by cubical isometries on $X$ and $Y$ respectively. For an introduction to the basic terminology and properties of $\CAT$ cube complexes, we refer to \cite{Wise21}. In this section, we show how cubical isometries $X \rightarrow Y$ can be understood as isomorphisms between self-similar trees, provided that $X$ and $Y$ admit a certain type of edge-labeling. This will culminate in a characterisation of those portraits that induce cubical isometries. We begin with the necessary terminology to turn cubical isometries into isomorphisms of self-similar trees.

\begin{remark}
When we considered the graph $A$ in the previous section, all edges came with a chosen orientation. In the $1$-skeleton of a cube complex, edges don't come with a fixed orientation. When we speak of an {\it oriented edge} in $X$, we mean an edge together with a chosen orientation. (In contrast to the situation in $A$, either choice of orientation is allowed here.) Let $\overrightarrow{\mathcal{E}}(X)$ denote the collection of oriented edges in $X$. For any vertex $o \in X^{(0)}$, we write $\overrightarrow{\mathcal{E}}(X)_o$ for the set of all oriented edges starting at $o$. Given an oriented edge $e$, we denote the edge with opposite orientation by $e^{-1}$.
\end{remark}

Let $S$ be a square in $X$. If $e$ and $e'$ are two unoriented edges in $S$, we call them {\it parallel in $S$} if they don't share a vertex. If $e$ and $e'$ are two oriented edges in $S$, we call them {\it parallel in $S$} if their underlying unoriented edges are parallel and their starting points are connected by an edge in $S$. These two notions generate an equivalence relation on $\mathcal{E}(X)$ and $\overrightarrow{\mathcal{E}}(X)$ respectively. We say two (un)oriented edges are parallel, if they are equivalent under this equivalence relation. We call the equivalence classes {\it (oriented) parallel classes}.

\begin{definition} \label{def:cubical.edge.labeling}
	A {\it cubical edge-labeling} on $X$ is a surjective map $\ell : \overrightarrow{\mathcal{E}}(X) \rightarrow \Sigma$, where $\Sigma$ is a finite set, satisfying the following properties:
	\begin{enumerate}
		\item For every vertex $o \in X^{(0)}$, the restriction $\ell \vert_{\overrightarrow{\mathcal{E}}(X)_o}$ is injective.
		
		\item If $e, e' \in \overrightarrow{\mathcal{E}}(X)$ are parallel oriented edges, then $\ell(e) = \ell(e')$.
				
		\item Let $o, o' \in X^{(0)}$, let $e, f \in \overrightarrow{\mathcal{E}}(X)_o$, and let $e', f' \in \overrightarrow{\mathcal{E}}(X)_{o'}$ such that $\ell(e) = \ell(e')$ and $\ell(f) = \ell(f')$. Then $e$ and $f$ span a square if and only if $e'$ and $f'$ do.
		
		\item There exists a fixed-point free involution $\cdot^{-1} : \Sigma \rightarrow \Sigma$ such that for every $e \in \overrightarrow{\mathcal{E}}(X)$, $\ell(e^{-1}) = \ell(e)^{-1}$.

	\end{enumerate}
	
	We call a cubical edge-labeling {\it $\Lambda$-invariant} if for all $g \in \Lambda$ and all $e \in \overrightarrow{\mathcal{E}}(X)$, we have $\ell(e) = \ell(g(e))$.
\end{definition}

We will characterise the cube complexes that admit a $\Lambda$-invariant cubical edge-labeling in Theorem \ref{thm:characterising.existence.of.cubical.edge.labelings}. For now, we simply work under the assumption that $X$ admits such a labeling.

We fix once and for all a $\Lambda$-invariant cubical edge-labeling $\ell$ on $X$ and write $\Sigma$ for the set of labels. This labeling allows us to associate to every edge path in $X$ a word over the alphabet $\Sigma$ by consecutively reading the labels of the oriented edges in the edge path. Given an edge path $\gamma$ in $X$, we write $\ell(\gamma)$ for this associated word. Due to property (1) of cubical edge-labelings, one easily sees that an edge path $\gamma$ is uniquely determined by its starting vertex and the word $\ell(\gamma)$. We obtain an injective map
\[ \ell : \{ \text{edge paths that start at } o \} \rightarrow \Sigma^*. \]
We denote the image of this map by $\mathcal{L}_o^{edge}$. Since $\ell$ is a bijection onto $\mathcal{L}_o^{edge}$, we will identify edge paths in $X$ that start at $o$ with their corresponding word in $\mathcal{L}_o^{edge}$. In particular, we will treat elements of $\mathcal{L}_o^{edge}$ both as words over $\Sigma$ and as edge paths that have a starting point and an endpoint.

\begin{notation} \label{not:edge}
	For every $v \in \mathcal{L}_o^{edge}$, we denote its endpoint by $v^{\circ} \in X^{(0)}$. In addition, we write
	\[ \mathcal{L}_v^{edge} := \mathcal{L}_{v^{\circ}}^{edge} = \{ u \in \Sigma^* \vert vu \in \mathcal{L}_o^{edge} \}, \]
	and we write
	\[ \Sigma_v^{edge} := \Sigma_{v^{\circ}}^{edge} := \ell( \overrightarrow{ \mathcal{E} }(X)_{v^{\circ}} ) \]
	for the collection of of labels on outgoing edges at the endpoint of $v$. If $v \in \Sigma^* \setminus \mathcal{L}_o^{edge}$, we define $\Sigma_v^{edge} := \emptyset$. It is an easy exercise that $\mathcal{L}^{edge}_{v}$ depends only on the endpoint of $v$.
\end{notation}

\begin{definition} \label{def:commuting.labels}
	Let $s, t \in \Sigma$. We say that $s$ and $t$ {\it commute} if there exists a vertex $p$ such that $s, t \in \Sigma_p^{edge}$ and the outgoing edges at $p$ labeled by $s$ and $t$ span a square. We write $[s,t] = 1$ if $s$ and $t$ commute and $[s,t] \neq 1$ if they do not commute.
\end{definition}

By condition (3) of a cubical edge-labeling, if there are outgoing edges labeled $s$ and $t$ at two different vertices $p$ and $p'$, then the edges at $p$ span a square if and only if the edges at $p'$ do. Thus, commutation of labels does not depend on the vertex $p$. We highlight that for any $s \in \Sigma$, we have $[s,s] \neq 1$ and $[s, s^{-1}] \neq 1$. (If not, one could produce a square with a vertex that has two distinct outgoing edges that are both labeled by $s$, a contradiction to condition (1) of a cubical edge-labeling.) 
Given two pairs of labels $(s,t)$ and $(s', t')$, we write $[s,t] = [s',t']$ whenever $s$ and $t$ commute if and only if $s'$ and $t'$ commute; for example, $[s,t] = [s,t^{-1}]$ for all $s, t \in \Sigma$, which follows from the properties of cubical edge-labelings.

We highlight that, if $s, t \in \Sigma_v^{edge}$ and $[s,t] = 1$, then $t \in \Sigma_{vs}^{edge}$ and $s \in \Sigma_{vt}^{edge}$ by condition (2) of cubical isometries. This becomes essential when we have, for example, a label $s \in \Sigma_o^{edge}$ and a word $tt' \in \mathcal{L}_o^{edge}$ such that $[s,t] = [s,t'] = 1$. In that case, the words $stt'$, $tst'$, and $tt's$ all lie in $\mathcal{L}_o^{edge}$ and $stt'^{\circ} = tst'^{\circ} = tt's^{\circ}$. More formally, we have the following definition.

\begin{definition} \label{def:commuting.words}
	Let $o$ be a vertex and let $v, w \in \mathcal{L}_o^{edge}$. We say that $v$ and $w$ commute if and only if for any letter $s$ in $v$ and any letter $t$ in $w$, we have $[s,t] = 1$. We write $[v,w] = 1$ and $[v,w] \neq 1$ if $v$ and $w$ commute or do not commute respectively.
\end{definition}

As indicated above, when $[v,w] = 1$, then we can splice together the words $v$ and $w$ in any way and obtain an element in $\mathcal{L}_o^{edge}$.

\begin{definition} \label{def:reduced.words}
	An edge-path is called {\it reduced} it and only if it crosses no hyperplane more than once. (Such paths are also called {\it combinatorial geodesics}). A word $v \in \mathcal{L}_o^{edge}$ is called {\it reduced} if and only if its corresponding edge path is reduced. We call $v$ {\it reducible} if and only if it is not reduced.
\end{definition}

\begin{notation} \label{not:red}
	We denote the image under $\ell$ of all reduced edge paths starting at $o$ by $\mathcal{L}_o^{red} \subset \mathcal{L}_o^{edge}$. Furthermore, we write for every $v \in \mathcal{L}^{red}_o$
	\[ \mathcal{L}_v^{red} := \{ u \in \Sigma^* \vert vu \in \mathcal{L}_o^{red} \}, \]
	\[ \Sigma_v^{red} := \{ s \in \Sigma_v^{edge} \vert vs \in \mathcal{L}_o^{red} \}. \]
	In addition, we write
	\[ \reduc(v) := \Sigma_v^{edge}Ê\setminus \Sigma_v^{red}, \]
	\[Ê\langle \reduc(v) \rangle := \reduc(v) \cup \{Êt \in \Sigma_v^{edge} \vert \exists s \in \reduc(v) : [s,t] = 1 \}. \]
	It is a standard exercise about $\CAT$ cube complexes that all elements in $\reduc(v)$ commute pairwise. (Hint: Every $s \in \reduc(v)$ corresponds to an edge at $v^{\circ}$ that crosses a hyperplane separating $o$ from $v^{\circ}$. Show that these hyperplanes intersect pairwise.)
\end{notation}

\begin{definition} \label{def:innermost.cancellation}
	Let $s_1 \dots s_n \in \mathcal{L}_o^{edge}$ and let $1 \leq i < j \leq n$. We say the pair $s_i, s_j$ is an {\it innermost cancellation} if $s_i \dots s_j$ is a reducible word while $s_i \dots s_{j-1}$ and $s_{i+1} \dots s_j$ are both reduced.
\end{definition}

One immediately sees that, if $s_i$, $s_j$ is an innermost cancellation, then the $i$-th and $j$-th edge in the edge-path that starts at $o$ and corresponds to $s_1 \dots s_n$ cross the same hyperplane in opposite directions. Furthermore, any reducible word contains an innermost cancellation. The following Lemma is a standard fact about reducible edge-paths in $\CAT$ cube complexes. A proof using the language of labels can be found in {\cite[Lemma 2.21]{HartnickMedici23b}} and a visualisation is given in Figure \ref{fig:innermost.cancellation}.

\begin{figure}
	\centering
    	\def\svgwidth{5in}
	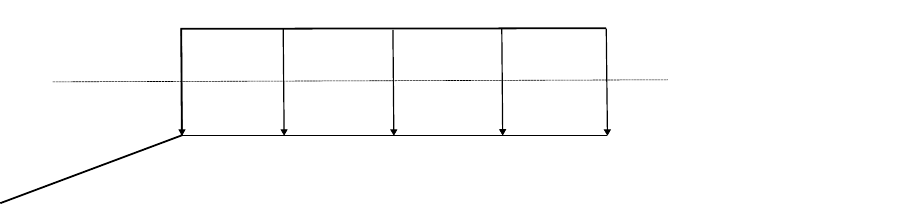
	\caption{The pair $s_i, s_j$ depicted here is an innermost cancellation of the word/edge-path $s_1 \dots s_n$. One can inductively show that the hyperplanes $\hat{h}_k$ for $i < k < j$ all intersect with the hyperplane $\hat{h}_i$. This yields a chain of squares as depicted in the figure. From this chain of squares, we can conclude that $s_j = s_i^{-1}$ and that $[s_i, s_k] = 1$ for all $i < k < j$.}
	\label{fig:innermost.cancellation}
\end{figure}

\begin{lemma} \label{lem:innermost.cancellation.reduction}
	Let $s_1 \dots s_n \in \mathcal{L}_o^{edge}$ such that $s_i$ and $s_j$ are an innermost cancellation. Then $[s_i, s_k] = [s_k, s_j] = 1$ for every $i < k < j$. Furthermore,
	\[ s_1 \dots s_n^{\circ} =  s_1 \dots \hat{s_i} \dots \hat{s_j} \dots s_n^{\circ}, \]
	where $\hat{s_i}$ indicates that we skipped the letter $s_i$.
\end{lemma}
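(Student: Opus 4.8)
The plan is to follow the strategy sketched in Figure~\ref{fig:innermost.cancellation}: record, for each edge of the path, the hyperplane it crosses; show that the two edges forming the innermost cancellation cross a common hyperplane $\hat{h}$ and that every edge strictly between them crosses a hyperplane crossing $\hat{h}$; and then use this to delete the cancelling pair through a sequence of square-moves. Throughout, write $p_m := s_1\dots s_m^{\circ}$ for $0\le m\le n$ (so $p_0=o$), let $e_m$ be the $m$-th edge of the path $s_1\dots s_n$, going from $p_{m-1}$ to $p_m$ with $\ell(e_m)=s_m$, and let $\hat{h}_m$ be the hyperplane crossed by $e_m$.

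First I would identify the cancelling hyperplane. Since $s_i\dots s_j$ is reducible, some hyperplane is crossed by at least two of $e_i,\dots,e_j$; as $s_i\dots s_{j-1}$ and $s_{i+1}\dots s_j$ are reduced, no hyperplane is crossed twice within $\{e_i,\dots,e_{j-1}\}$ or within $\{e_{i+1},\dots,e_j\}$, so those two edges must be exactly $e_i$ and $e_j$. Hence $\hat{h}_i=\hat{h}_j=:\hat{h}$, and, again by reducedness, none of $e_{i+1},\dots,e_{j-1}$ crosses $\hat{h}$. Consequently $p_i,\dots,p_{j-1}$ lie on one side of $\hat{h}$ while $p_{i-1}$ and $p_j$ lie on the other; in particular $e_i$ and $e_j$ cross $\hat{h}$ in opposite directions, so $e_j$ is parallel to $e_i^{-1}$, and thus $s_j=\ell(e_j)=\ell(e_i^{-1})=s_i^{-1}$ by properties (2) and (4) of a cubical edge-labeling together with the fact that two oriented edges crossing the same hyperplane in the same direction are parallel.

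Next I would prove that $\hat{h}$ crosses $\hat{h}_k$ for every $i<k<j$. Along the path from $p_{i-1}$ to $p_j$ the hyperplane $\hat{h}$ is crossed only at $e_i$ and $e_j$, and $\hat{h}_k$ is crossed only at $e_k$ (because $s_i\dots s_{j-1}$ is reduced, so $e_i,\dots,e_{j-1}$ cross pairwise distinct hyperplanes). Reading off which side each vertex lies on, the four vertices $p_{i-1},p_i,p_{j-1},p_j$ then lie in the four distinct intersections of a half-space of $\hat{h}$ with a half-space of $\hat{h}_k$; since in a $\CAT$ cube complex two hyperplanes cross precisely when all four such intersections are nonempty, $\hat{h}$ and $\hat{h}_k$ cross. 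It is standard that two edges sharing a vertex span a square exactly when the hyperplanes they cross cross each other, and that an oriented edge crossing $\hat{h}_i$ (resp.\ $\hat{h}_k$) carries the label $s_i^{\pm1}$ (resp.\ $s_k^{\pm1}$); so there is a square witnessing $[s_i,s_k]=1$, and since commutation of labels is insensitive to inverses and $s_j=s_i^{-1}$ we also get $[s_k,s_j]=[s_k,s_i]=1$. This gives the first assertion of the lemma.

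Finally, for the endpoint identity I would remove the pair $e_i,e_j$ by square-moves. Since $\hat{h}_i$ crosses $\hat{h}_{i+1}$, the edges $e_i^{-1}$ and $e_{i+1}$ span a square at $p_i$; replacing the segment $p_{i-1}\xrightarrow{s_i}p_i\xrightarrow{s_{i+1}}p_{i+1}$ by the other two sides of this square produces an edge-path with the same endpoints reading $s_1,\dots,s_{i-1},s_{i+1},s_i,s_{i+2},\dots,s_n$, in which the edge now labeled $s_i$ still crosses $\hat{h}$ and still ends at $p_{i+1}$. Iterating this move — legal at each step exactly because $\hat{h}_i$ crosses $\hat{h}_k$ for all $i<k<j$ — migrates the $s_i$-labeled edge rightward past $s_{i+1},\dots,s_{j-1}$, the vertex immediately after it always remaining the corresponding original vertex $p_m$ and the part of the path beyond $p_j$ never changing. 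After $j-1-i$ moves the path reads $s_1,\dots,s_{i-1},s_{i+1},\dots,s_{j-1},s_i,s_j,s_{j+1},\dots,s_n$; as $s_j=s_i^{-1}$, the two consecutive edges labeled $s_i$ and $s_i^{-1}$ now backtrack across $\hat{h}$, so the vertex preceding them — the endpoint of the edge-path $s_{i+1}\dots s_{j-1}$ started at $p_{i-1}$ — equals the vertex following them, namely $p_j$. Hence $s_1\dots\hat{s_i}\dots\hat{s_j}\dots s_n$ is a legitimate edge-path from $o$ with endpoint $p_n=s_1\dots s_n^{\circ}$, which is the desired equality. The part I expect to require the most care is the bookkeeping in this last iteration: verifying that each flip is legal (precisely the crossings established in the third step), that it leaves the tail of the path untouched, and that the vertex reached right after the migrating $s_i$-labeled edge stays equal to the corresponding original $p_m$; everything else rests on standard facts about $\CAT$ cube complexes cited above.
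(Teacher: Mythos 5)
Your argument is correct, and it is essentially the approach the paper itself only sketches (the paper defers the proof to \cite[Lemma 2.21]{HartnickMedici23b} and Figure \ref{fig:innermost.cancellation}): identify the common hyperplane $\hat h_i=\hat h_j$, show it crosses each $\hat h_k$ for $i<k<j$, deduce $s_j=s_i^{-1}$ and $[s_i,s_k]=1$, and then cancel the pair by pushing the $s_i$-edge through the resulting chain of squares. The only cosmetic difference is that you obtain the crossings $\hat h_i\pitchfork\hat h_k$ directly from the four-quadrant criterion for halfspaces rather than by the inductive square-chaining indicated in the figure caption, which is an equally standard route.
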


The following Lemma is a variation of {\cite[Proposition 2.15]{HartnickMedici23b}}.

\begin{lemma} \label{lem:regular.language}
	Both $\mathcal{L}_o^{edge}$ and $\mathcal{L}_o^{red}$ admit a triple $(A^{edge}, o, \ell)$ and $(A^{red}, o, \ell)$ respectively such that they are the respective language induced by this triple.
\end{lemma}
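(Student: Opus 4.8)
The plan is to build the finite oriented labeled graphs $A^{edge}$ and $A^{red}$ directly from the combinatorial structure of $X$, using the fact that the $\Lambda$-action is cocompact and the cubical edge-labeling is $\Lambda$-invariant, so that there are only finitely many ``local pictures'' around vertices of $X$. For $A^{edge}$: let the vertex set of $A^{edge}$ be the quotient $X^{(0)}/\Lambda$, which is finite by cocompactness, with $o$ (more precisely its $\Lambda$-orbit) as the starting vertex. For each vertex $p \in X^{(0)}$ and each oriented edge $e \in \overrightarrow{\mathcal{E}}(X)_p$ with endpoint $q$, put an oriented edge in $A^{edge}$ from $[p]$ to $[q]$ labeled by $\ell(e) \in \Sigma$; this is well-defined on $\Lambda$-orbits precisely because $\ell$ is $\Lambda$-invariant and $\Lambda$ acts by cubical isometries. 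Property (1) of a cubical edge-labeling says exactly that $\ell|_{\overrightarrow{\mathcal{E}}(X)_p}$ is injective, which gives the injectivity of $\ell|_{\mathcal{E}(A^{edge})_{[p]}}$ required in the definition of a triple $(A^{edge}, o, \ell)$.

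Next I would verify that the path language of $(A^{edge}, o, \ell)$ is exactly $\mathcal{L}_o^{edge}$. One direction is immediate: any edge-path in $X$ starting at $o$ projects to an edge-path in $A^{edge}$ starting at $[o]$ spelling the same word, so $\mathcal{L}_o^{edge} \subseteq \mathcal{L}_{A^{edge}}$. For the converse, given an edge-path in $A^{edge}$ starting at $[o]$ spelling a word $s_1 \dots s_n$, I would lift it step by step to an edge-path in $X$ starting at $o$: at each stage, having reached a vertex $p \in X^{(0)}$, the next letter $s_{i+1}$ is the label of some oriented edge out of some vertex in the $\Lambda$-orbit $[p]$, hence — translating by an appropriate element of $\Lambda$ and using $\Lambda$-invariance of $\ell$ together with surjectivity of $\ell$ onto the labels at each orbit — it is the label of a (unique, by property (1)) oriented edge out of $p$ itself. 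This produces the required lift, so $\mathcal{L}_{A^{edge}} \subseteq \mathcal{L}_o^{edge}$, and we get equality.

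For $A^{red}$ the vertex set must remember enough of the past to detect reducibility, i.e. whether the current edge-path crosses a hyperplane it has already crossed. Using Lemma \ref{lem:innermost.cancellation.reduction}, reducibility of $vs$ for a reduced $v$ is detected by whether $s \in \reduc(v)$, and $\reduc(v)$ depends only on the set of hyperplanes separating $o$ from $v^\circ$ that are dual to edges at $v^\circ$; equivalently, up to the $\Lambda$-action, on the pair consisting of the orbit $[v^\circ]$ together with the subset $\reduc(v) \subseteq \Sigma_{v^\circ}^{edge}$ (a subset of commuting labels, by the exercise in Notation \ref{not:red}). So I would take the vertex set of $A^{red}$ to be the (finite, by cocompactness and finiteness of $\Sigma$) set of pairs $([p], R)$ with $R$ a set of pairwise-commuting labels in $\Sigma_p^{edge}$ that actually arises as $\reduc(v)$ for some reduced $v$ with $v^\circ \in [p]$; the starting vertex is $([o], \emptyset)$. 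From $([p], R)$ put an edge labeled $s$ to $([q], R')$ whenever $s \in \Sigma_p^{edge} \setminus R$, where $q$ is the endpoint of the $s$-edge at $p$ and $R'$ is the updated reducible-set, which one computes from $R$, $s$, and the commutation relations among labels at $q$. Injectivity of the label restriction at each vertex again follows from property (1). Then the same lifting argument as before, now tracking the invariant ``$R$ = the set of labels at the current endpoint whose edge crosses a hyperplane already crossed,'' shows the path language of $(A^{red}, o, \ell)$ equals $\mathcal{L}_o^{red}$.

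The main obstacle is the bookkeeping in the $A^{red}$ construction: making precise how $R$ transforms under appending a reduced letter $s$ — specifically showing that the new reducible set $R'$ at $q$ is determined by $([p], R)$ and $s$ alone (independent of which $v$ realized the state), and that appending $s \notin R$ indeed keeps the path reduced. This is exactly where Lemma \ref{lem:innermost.cancellation.reduction} and the ``hyperplanes intersect pairwise'' fact from Notation \ref{not:red} do the work: an innermost cancellation in $vs$ would force $s = s_i^{-1}$ for a letter $s_i$ of $v$ with $[s_i, s_k]=1$ for all intermediate $k$, which translates into $s^{-1}$ lying in (the set generating) $R'$ before $s$ is appended. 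Once the transition rule is shown to be well-defined on states, the equality of languages is routine, and the lemma follows.
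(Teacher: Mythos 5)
Your proposal is correct and follows essentially the same route as the paper: for $A^{edge}$ the paper also takes the quotient (the $1$-skeleton of $S = X/\Lambda$ with each edge doubled into two orientations), and for $A^{red}$ the paper's vertices are equivalence classes of reduced words under ``same $\Lambda$-orbit of endpoint and same $\reduc$-set,'' which is exactly your state space of pairs $([p],R)$. The well-definedness of the transition that you flag as the main obstacle is handled in the paper by the same update formula $\reduc(vs) = \{s^{-1}\} \cup \{t \in \reduc(v) \mid [s,t]=1\}$.
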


\begin{proof}
	We start with $\mathcal{L}_o^{edge}$. Consider the quotient of $X$ by the action of $\Lambda$ and denote it by $S$. By assumption, this is a compact cube complex. Let $o'$ be the image of $o$ under the projection to the quotient. We create $A^{edge}$ by taking the $1$-skeleton of $S$ and replacing every unoriented edge by two oriented edges that connect the endpoints of the edge in both directions. The vertex $o'$ will be the root. Since the cubical edge-labeling on $X$ is $\Lambda$-invariant, it descends to $S$ and induces an edge-labeling (again denoted by $\ell$) on the oriented edges in the new graph. One easily verifies that the triple $(A^{edge}, o', \ell)$ induces the language $\mathcal{L}_o^{edge}$.
	
	The construction of $A^{red}$ is analogous to constructions in \cite{NibloReeves98, HartnickMedici23b}. Let $v, w \in \mathcal{L}_o^{red}$. We write $v \sim w$ if and only if $v^{\circ} \in \Lambda \cdot w^{\circ}$ and $\reduc(v) = \reduc(w)$. This defines an equivalence relation. Since $\Lambda$ acts cocompactly on $X$ and $\Sigma$ is finite, there are only finitely many equivalence classes. We denote the equivalence class of a word $v$ by $[v]$. Note that the empty word $\epsilon$ is the unique word such that $\reduc(\epsilon) = \emptyset$ and thus it forms its own equivalence class. We define the graph $A^{red}$ as follows: Its vertices are the equivalence classes $[v]$. For every $s \in \Sigma_{v}^{red}$, we introduce an edge from $[v]$ to $[vs]$ labeled by $s$.
	
	We need to show that $[vs]$ does not depend on the choice of representative of $[v]$. Suppose $v \sim v'$. Then there exists $g \in \Lambda$ such that $g(v^{\circ}) = v'^{\circ}$ and $\reduc(v) = \reduc(v')$. Therefore, $\Sigma_v^{red} = \Sigma_{v'}^{red}$ and thus $v's \in \mathcal{L}_o^{red}$. Furthermore, since the labeling on $X$ is $\Lambda$-invariant, we conclude that $g$ sends the edge from $v^{\circ}$ to $vs^{\circ}$ to the unique outgoing edge at $v'^{\circ}$ that is labeled by $s$. Therefore, $g$ sends $vs^{\circ}$ to $v's^{\circ}$. We are left to show that $\reduc(vs) = \reduc(v's)$. It is a standard exercise about cube complexes and cubical edge-labelings that
	\[ \reduc(vs) = \{ s^{-1} \} \cup \{ t \in \reduc(v) \vert [s,t] = 1 \}. \]
	Since $\reduc(v) = \reduc(v')$, this implies that $\reduc(vs) = \reduc(v's)$. This implies that $vs \sim v's$ and thus our edges are well-defined.
	
	This construction produces a triple $(A^{red}, [\epsilon], \ell)$ that forms an oriented, labeled graph with a marked vertex. By construction, the language induced by this graph is exactly $\mathcal{L}_o^{red}$.
\end{proof}

Since $\mathcal{L}_o^{edge}$ and $\mathcal{L}_o^{red}$ are defined by a rooted, oriented, labeled graph $(A^{edge}, o', \ell)$ and $(A^{red}, o', \ell)$ respectively, there exist self-similar trees $T_o^{edge}$ and $T_o^{red}$ as discussed in section \ref{subsec:self.similar.trees}. These trees inherit a labeling from the labeling on $A^{edge}$ and $A^{red}$ respectively. Since $\mathcal{L}_o^{red} \subset \mathcal{L}_o^{edge}$, there is a canonical, label-preserving embedding $T_o^{red} \hookrightarrow T_o^{edge}$.

Note that there is a canonical map $T_o^{edge} \rightarrow X^{(1)}$ that sends every vertex of $T_o^{red}$, which is a word $v \in \mathcal{L}_o^{red}$, to the vertex $v^{\circ}$ in $X$. Note that this map is not injective unless $X$ is a tree. It will frequently be useful to think of $T_o^{edge}$ as sitting inside of $X$ in this way as it provides valuable geometric intuition.

\begin{remark} \label{rem:root.of.subtree}
	Let $T'$ be a rooted tree and $T$ any subtree of $T'$. There exists a unique vertex $v$ in $T$ that is the closest to the root of $T'$. We view $v$ as the induced root of $T$ as a subtree of $T'$.
\end{remark}




\subsection{Cubical portraits} \label{subsec:cubical.portraits}

\begin{notation}
	Let $X$ and $Y$ be two locally finite $\CAT$ cube complexes with cocompact cubical actions $\Lambda \curvearrowright X$ and $\Lambda' \curvearrowright Y$. Throughout this section, we assume that there exist $\Lambda$- and $\Lambda'$-invariant cubical edge-labelings $\ell$ and $\ell'$ on $X$ and $Y$ respectively. Let $o$ and $o'$ be vertices in $X$ and $X'$ respectively. In $X$ we obtain the established notation of $\mathcal{L}_o^{edge}$, $\mathcal{L}_o^{red}$, $T_o^{edge}$, $T_o^{red}$, $\Sigma_v^{edge}$ etc. We analogously define notation $\mathcal{L}'^{edge}_{o'}$, $T'^{edge}_{o'}$, $\Sigma'^{edge}_w$ etc.\,for $Y$.
\end{notation}

\begin{definition} \label{def:portrait.on.subtree}
	Let $v_0 \in \mathcal{L}_o^{edge}$, $w_0 \in \mathcal{L}'^{edge}_{o'}$, and let $T$ be a subtree of $T_o^{edge}$ with root $v_0$. A {\it portrait on $T$ to $w_0$ in $T'^{edge}_{o'}$} is a collection $(\pre{v}{\sigma})_{v \in T}$ of injective maps $\pre{v}{\sigma} : \Sigma_v^{edge} \hookrightarrow \Sigma'$ such that $\pre{v_0}{\sigma}( \Sigma_{v_0}^{edge} ) = \Sigma'^{edge}_{w_0}$.
	
	If $w_0$ and $T'^{edge}_{o'}$ are clear from context, we simply speak of a portrait on $T$.
\end{definition}

\begin{notation} \label{not:portrait.formula}
	Let $(\pre{v}{\sigma})_{v \in T}$ be a portrait on $T$ to $w_0$. For every $v = v_0 s_1 \dots s_n \in \mathcal{L}_o^{edge}$, we define
	\[ \pre{v_0}{\sigma}(s_1 \dots s_n) := \pre{v_0}{\sigma}(s_1) \pre{v_0 s_1}{\sigma}(s_2) \dots \pre{v_0 s_1 \dots s_{i-1}}{\sigma}(s_i) \dots \pre{v_0 s_1 \dots s_{n-1}}{\sigma}(s_n),  \]
	\[ \sigma(v) = \sigma(v_0 s_1 \dots s_n) := w_0 \pre{v_0}{\sigma}(s_1 \dots s_n). \]
	In general, the expressions on the right-hand-side in these definitions are simply words over the alphabet $\Sigma'$. They may lie outside of $\mathcal{L}'^{red}_{o'}$ or $\mathcal{L}'^{edge}_{o'}$.
\end{notation}

\begin{definition} \label{def:cubical.portrait.on.subtree}	
	A portrait $(\pre{v}{\sigma})_{v \in T}$ on $T$ to $w_0$ in $T'^{edge}_{o'}$ is called a {\it cubical portrait on $T$ to $w_0$ in $T'^{edge}_{o'}$} if it satisfies the following conditions:
	\begin{enumerate}
		\item[$tree$] For all $v = v_0 s_1 \dots s_n \in T$, we have
		\[ \pre{v}{\sigma}( \Sigma_v^{edge} ) = \Sigma'^{edge}_{ \sigma(v) },  \]
		
		\item[$comm$] For all $v \in T$, the bijection $\pre{v}{\sigma} : \Sigma_v^{edge} \rightarrow \Sigma'^{edge}_{\sigma(v)}$ preserves commutation, that is $[s,t] = 1 \Leftrightarrow [\pre{v}{\sigma}(s), \pre{v}{\sigma}(t)] = 1$ for all $s, t \in \Sigma_v^{edge}$.
		
		\item[$par$] For all $v \in T$, all $s \in \Sigma_v^{edge}$ such that $vs \in T$, and all $t \in \Sigma_v^{edge}$ such that $[s,t] = 1$, we have
		\[ \pre{v}{\sigma}(t) = \pre{vs}{\sigma}(t). \]
		
		\item[$inv$] For all $v \in T$ and all $s \in \Sigma_v^{edge}$ such that $vs \in T$, we have
		\[ \pre{vs}{\sigma}(s^{-1}) = \pre{v}{\sigma}(s)^{-1}. \]
		
		\item[$end$] For all $v, w \in T$ such that $v^{\circ} = w^{\circ}$, we have $\pre{v}{\sigma} = \pre{w}{\sigma}$.
	\end{enumerate}
\end{definition}

Our main results concern the case where $T = T_o^{edge}$ or $T = T_o^{red}$. However, the generality of the definition above is useful when it comes to explicit constructions.\\

Let $g : X \rightarrow Y$ be a cubical isometry. This isometry induces an automorphism of rooted trees $T_o^{edge} \rightarrow T'^{edge}_{o'}$ that restricts to a rooted tree automorphism $T_o^{red} \rightarrow T'^{red}_{o'}$. This rooted tree automorphism can be encoded as the following portrait on $T_o^{edge}$ to $\epsilon$ in $T'^{edge}_{o'}$: At every vertex $w \in \mathcal{L}_o^{edge}$, we define $\pre{w}{\sigma(g)} : \Sigma_w^{edge} \rightarrow \Sigma'^{edge}_{g(w)}$ to be the unique bijection that satisfies
\[ g(ws) = g(w) \pre{w}{\sigma(g)}(s) \]
for every $s \in \Sigma_w^{edge}$. The family $(\pre{w}{\sigma(g)})_{w \in T_o^{edge} }$ defines a portrait on $T_o^{edge}$. We call the maps $\pre{w}{\sigma(g)}$ the {\it local actions} of $g$. We say that a portrait {\it defines a cubical isometry} on $X$ if it can be obtained from a cubical isometry $g \in \Aut(X)$ in this way. The following is a generalisation of {\cite[Theorem J]{HartnickMedici23b}}.

\begin{theorem} \label{thm:characterization.of.cubical.portraits}
	A portrait on $T_o^{edge}$ to $\epsilon$ in $T'^{edge}_{o'}$ is cubical if and only if it defines a cubical isometry from $X$ to $Y$ that sends $o$ to $o'$.
	
	In particular, the stabilizer group $\Stab_{\Aut(X)}(o)$ is in $1-1$ correspondence with the set of all cubical portraits on $T_o^{edge}$ to $\epsilon$ in $T_{o}^{edge}$.
\end{theorem}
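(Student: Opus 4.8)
The plan is to prove the two implications of the equivalence and then read off the ``in particular'' statement. The direction ``defines a cubical isometry $\Rightarrow$ cubical'' is a routine unwinding of definitions. If $g : X \to Y$ is a cubical isometry with $g(o) = o'$ and $(\pre{w}{\sigma(g)})_{w}$ is its portrait, then $tree$ holds because $g$ restricts, at each vertex $w^{\circ}$, to a bijection from the outgoing edges at $w^{\circ}$ onto those at $g(w^{\circ}) = \sigma(w)^{\circ}$; $comm$ holds because $g$ and $g^{-1}$ send squares to squares; $par$ holds because $g$ sends the square spanned by the $s$- and $t$-edges at $w^{\circ}$ to a square at $g(w^{\circ})$ and parallel edges carry the same label (property (2) of a cubical edge-labeling); $inv$ holds because $g$ commutes with orientation reversal and $\ell'(e^{-1}) = \ell'(e)^{-1}$ (property (4)); and $end$ holds because the bijection $\pre{w}{\sigma(g)}$ depends only on the action of $g$ on the edges incident to $w^{\circ}$.

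For the converse, let $(\pre{v}{\sigma})_{v \in T_o^{edge}}$ be a cubical portrait. By condition $tree$ and Theorem \ref{thm:characterising.tree.isomorphisms}, the associated map $\sigma$ is a rooted tree isomorphism $T_o^{edge} \to T'^{edge}_{o'}$; in particular $\sigma(v) \in \mathcal{L}'^{edge}_{o'}$ for every $v \in \mathcal{L}_o^{edge}$, so $\sigma(v)$ has a well-defined endpoint $\sigma(v)^{\circ}$ in $Y$. The heart of the proof is to show that $\sigma$ respects the two elementary moves relating edge-paths with common endpoints --- insertion/deletion of a backtrack $s s^{-1}$, and the square move $s t \leftrightarrow t s$ when $[s,t] = 1$ --- in the strong sense that if $w'$ is obtained from $w$ by such a move then $\sigma(w')$ is obtained from $\sigma(w)$ by a move of the same type. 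For a backtrack at $u_1 s s^{-1}$, condition $inv$ rewrites $\pre{u_1 s}{\sigma}(s^{-1})$ as $\pre{u_1}{\sigma}(s)^{-1}$, so $\sigma(w)$ literally contains the backtrack $\pre{u_1}{\sigma}(s)\,\pre{u_1}{\sigma}(s)^{-1}$; condition $end$, applied step by step along the common tail $u_2$ (using $(u_1 s s^{-1})^{\circ} = u_1^{\circ}$), identifies the remaining factors of $\sigma(w)$ with those of $\sigma(w')$. For a square move at $u_1 s t \leftrightarrow u_1 t s$, conditions $par$ and $comm$ show that $\sigma(w)$ and $\sigma(w')$ contain, at the corresponding position, the words $\pre{u_1}{\sigma}(s)\pre{u_1}{\sigma}(t)$ and $\pre{u_1}{\sigma}(t)\pre{u_1}{\sigma}(s)$ with $[\pre{u_1}{\sigma}(s), \pre{u_1}{\sigma}(t)] = 1$, and $end$ again matches the tails. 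Since the $2$-skeleton of $X$ is simply connected, any two edge-paths of $X$ with the same endpoints are joined by a finite sequence of such moves; hence $v^{\circ} = w^{\circ}$ forces $\sigma(v)^{\circ} = \sigma(w)^{\circ}$, and we obtain a well-defined map $\phi : X^{(0)} \to Y^{(0)}$, $v^{\circ} \mapsto \sigma(v)^{\circ}$.

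It remains to promote $\phi$ to a cubical isomorphism whose portrait is $(\pre{v}{\sigma})_{v}$. By $end$, the rule sending the oriented edge at $v^{\circ}$ labeled $s$ to the oriented edge at $\phi(v^{\circ})$ labeled $\pre{v}{\sigma}(s)$ does not depend on the representative $v$; using $tree$ it is a bijection from the edges at each vertex onto the edges at its image, and using $inv$ it is compatible with orientation reversal, so $\phi$ extends to a morphism $X^{(1)} \to Y^{(1)}$ that is an isomorphism on every link. Condition $comm$ shows that $\phi$ maps the squares at a vertex bijectively onto the squares at its image, so, since $\CAT$ cube complexes have flag links, $\phi$ extends uniquely to a cubical map $\Phi : X \to Y$ with $\Phi(o) = o'$. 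A cubical map that restricts to an isomorphism on every link is a covering map onto its image, which here (as $Y$ is connected) is all of $Y$; a covering map onto a simply connected complex with connected domain is an isomorphism, so $\Phi$ is a cubical isomorphism, and by construction its portrait is $(\pre{v}{\sigma})_{v}$. Finally the ``in particular'' statement is the case $X = Y$, $o = o'$: by the equivalence, cubical portraits on $T_o^{edge}$ to $\epsilon$ in $T_o^{edge}$ correspond exactly to cubical isometries $X \to X$ fixing $o$, i.e. to elements of $\Stab_{\Aut(X)}(o)$, and the correspondence is bijective since an isometry is determined by its action on the $1$-skeleton, which the portrait records through formula (\ref{eq:main.formula}).

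I expect the main obstacle to be the converse direction, and within it the passage from the combinatorial portrait axioms to a genuine geometric map. The subtlety is that the auxiliary tree $T_o^{edge}$ maps onto $X^{(1)}$ in a highly non-injective way, and one must check that the axioms $inv$, $par$, $comm$ and --- most importantly --- $end$ are precisely what is needed for $\sigma$ to be compatible with this collapse, so that it descends to a well-defined cubical morphism; the subsequent verification that this morphism is an isomorphism, and the bookkeeping for the ``in particular'' statement, are comparatively routine.
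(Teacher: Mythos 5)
Your argument is correct, and its first half coincides with the paper's: the forward implication is the paper's Lemma \ref{lem:first.direction.of.main.theorem}, and your verification that the backtrack move and the square move are carried over by a portrait satisfying $inv$, $par$, $comm$, $end$ is exactly the forward direction of the paper's Lemma \ref{lem:moves.are.carried.over}, combined with the standard moves lemma (Lemma \ref{lem:moves.for.edge.paths}). Where you genuinely diverge is the endgame. The paper proves the move-correspondence as an \emph{equivalence} (using injectivity of $\sigma$ for the reverse direction), which yields Proposition \ref{prop:cubical.isometries.are.well.defined.on.vertices} as an ``if and only if''; this gives, purely combinatorially, injectivity of the induced vertex map and preservation of adjacency in both directions (Lemma \ref{lem:converse.of.main.theorem}), and then the cubical isometry is obtained from the known fact that a graph isomorphism of $1$-skeletons of $\CAT$ cube complexes extends cubically. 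You instead prove only the one-way move invariance, build a label-defined map on stars that is bijective on links (via $tree$, $end$, $inv$), check it preserves squares (via $comm$, $par$), extend over cubes using flagness of links, and conclude by a covering-space argument: a locally isomorphic cubical map has open and closed image, hence is a covering of the connected complex $Y$, and since $Y$ is $\CAT$, hence simply connected, it is an isomorphism. Both routes are sound; yours is somewhat shorter and avoids the reverse move-correspondence, at the price of invoking topological machinery (simple connectivity, covering theory, the star/link extension facts), whereas the paper's two-sided Lemma \ref{lem:moves.are.carried.over} and Proposition \ref{prop:cubical.isometries.are.well.defined.on.vertices} are stated in a reusable form that the paper needs again (e.g.\ in the proof of Theorem \ref{thm:canonical.portrait.extension}) and, per the introduction, in forthcoming work on extending partially defined isometries, so the combinatorial detour is deliberate rather than redundant.
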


\begin{remark} \label{rem:basics.on.cubical.conditions}
	The following are some properties of the five conditions $tree$, $comm$, $par$, $inv$, $end$ that are useful when studying cubical portraits on $T_o^{edge}$ and its subtrees.
	\begin{itemize}
		\item	The condition $comm$ can be checked for every vertex $v \in T$ individually. It is a purely local condition.
		
		\item The conditions $par$ and $inv$ are statements about two vertices in $T$ that are connected by an edge. They can be checked locally by considering pairs $v, vs$ of reduced words.
		
		\item In condition $tree$, the set $\Sigma'^{edge}_{\sigma(v)}$ is empty if and only if $\sigma(v) \notin \mathcal{L}_{o'}^{edge}$. Thus, the equalities in $tree$ can only hold if $\sigma(v) \in \mathcal{L}_{o'}^{edge}$, making this an implicit part of condition $tree$. In other words, $tree$ is equivalent to stating that the portrait $(\sigma_v)_{v \in T_o^{edge}}$ induces a rooted tree automorphism $T_o^{edge} \rightarrow T'^{edge}_{o'}$ (see Theorem \ref{thm:characterising.tree.isomorphisms}).
		
		\item The condition $tree$ is not a local condition because of the term $\sigma(v)$. Writing $v = v_0 s_1 \dots s_n$, the expression $\sigma(v)$ depends on the maps $\sigma_{v_0}, \dots \sigma_{v_0 s_1 \dots s_{n-1}}$. In practice, constructions of cubical portraits are frequently designed so that one can show condition $tree$ for words of the form $v_0 s_1 \dots s_n \in T$ via induction on $n$.
				
		\item Condition $end$ is not a local condition. It guarantees that no incompatibilities occur within the portrait when pushing it forward to $X$ under the canonical map $T_o^{edge} \rightarrow X^{(1)}$. Condition $end$ also implies that, if a portrait satisfies $end$ and $(v, vs)$ and $(w, ws)$ are two pairs such that $v^{\circ} = w^{\circ}$, then we need to check $par$ and $inv$ only for one of these two pairs. Similarly, $comm$ only needs to be checked at either $v$ or $w$.
	\end{itemize}
\end{remark}

We postpone the proof of Theorem \ref{thm:characterization.of.cubical.portraits} to the appendix. For now, we just point out a basic property of cubical portraits on $T_o^{edge}$: It is clear from condition $tree$ that a cubical portrait on $T_o^{edge}$ to $\epsilon$ in $T'^{edge}_{o'}$ induces an rooted tree-isomorphism $T_o^{edge} \rightarrow T'^{edge}_{o'}$. The conditions of a cubical portrait imply that this map restricts to a rooted tree-isomorphism $T_o^{red} \rightarrow T'^{red}_{o'}$.

\begin{lemma} \label{lem:cubical.implies.reduction.preserving}
	Let $(\pre{v}{\sigma})_{v \in \mathcal{L}_o^{edge}}$ be a cubical portrait on $T^{edge}_o$ to $\epsilon$. Then for every $v \in \mathcal{L}_o^{red}$, we have that
	\[ \pre{v}{\sigma}( \Sigma_v^{red} ) = \Sigma'^{red}_{\sigma(v)}. \]
\end{lemma}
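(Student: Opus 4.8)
The plan is to show that a word $v \in \mathcal{L}_o^{edge}$ is reduced precisely when $\sigma(v)$ is reduced, and then to combine this with condition $tree$. The key observation is the characterisation of reducibility via innermost cancellations (Definition \ref{def:innermost.cancellation} and Lemma \ref{lem:innermost.cancellation.reduction}): a word is reducible if and only if it contains an innermost cancellation, and such a cancellation at positions $i < j$ forces $s_j = s_i^{-1}$ together with $[s_i, s_k] = 1$ for all $i < k < j$. The strategy is to track how the portrait transports this combinatorial pattern.

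First I would establish the following claim by induction on word length: for every $v = s_1 \dots s_n \in \mathcal{L}_o^{edge}$, the word $v$ is reduced if and only if $\sigma(v)$ is reduced (where we use that $\sigma(v) \in \mathcal{L}_{o'}^{edge}$ by condition $tree$, as noted in Remark \ref{rem:basics.on.cubical.conditions}). The base case is trivial. For the inductive step, the crucial lemma is that the portrait maps innermost cancellations to innermost cancellations and vice versa: if $s_i, s_j$ is an innermost cancellation in $v$, then using conditions $inv$ and $par$ one computes that the corresponding letters $\pre{s_1 \dots s_{i-1}}{\sigma}(s_i)$ and $\pre{s_1 \dots s_{j-1}}{\sigma}(s_j)$ of $\sigma(v)$ form an innermost cancellation as well. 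Concretely, condition $par$ (applied repeatedly across the commuting letters $s_i, s_{i+1}, \dots, s_{j-1}$, using that $[s_i,s_k]=1$ for $i<k<j$ so that one may slide $s_i$ rightward) shows $\pre{s_1 \dots s_{i-1}}{\sigma}(s_i) = \pre{s_1 \dots s_{j-1}}{\sigma}(s_i)$, and condition $inv$ then gives $\pre{s_1 \dots s_{j-1}}{\sigma}(s_j) = \pre{s_1 \dots s_{j-1}}{\sigma}(s_i^{-1}) = \pre{s_1 \dots s_{j-1}}{\sigma}(s_i)^{-1}$, i.e. the two images are inverse to each other; condition $comm$ transports the commutation relations $[s_i,s_k]=1$ to the images, so the intervening letters commute with this pair, which is exactly what is needed for an innermost cancellation of $\sigma(v)$. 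Since the argument is symmetric — $\sigma$ is a rooted tree isomorphism, so one may run the same reasoning with $\sigma^{-1}$, whose inverse portrait also satisfies the cubical conditions — one gets the converse direction too. Hence $v$ reducible $\iff$ $\sigma(v)$ reducible, which is the claim.

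Granting the claim, the lemma follows quickly. Let $v \in \mathcal{L}_o^{red}$. By condition $tree$, $\pre{v}{\sigma}(\Sigma_v^{edge}) = \Sigma'^{edge}_{\sigma(v)}$, so it suffices to check which labels stay in the reduced language. For $s \in \Sigma_v^{edge}$ we have $vs \in \mathcal{L}_o^{red}$ iff $vs$ is reduced iff (by the claim applied to $vs$) $\sigma(vs) = \sigma(v)\pre{v}{\sigma}(s)$ is reduced iff $\pre{v}{\sigma}(s) \in \Sigma'^{red}_{\sigma(v)}$; here I also use the claim applied to $v$ itself to know $\sigma(v)$ is reduced, so that $\sigma(v)\pre{v}{\sigma}(s)$ being reduced is equivalent to $\pre{v}{\sigma}(s) \in \Sigma'^{red}_{\sigma(v)}$. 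Therefore $\pre{v}{\sigma}(\Sigma_v^{red}) = \Sigma'^{red}_{\sigma(v)}$, as desired.

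The main obstacle I anticipate is the bookkeeping in the inductive claim — specifically, verifying that when one "slides" the cancelling letter $s_i$ past the commuting block $s_{i+1} \dots s_{j-1}$ using condition $par$, all the intermediate words indeed lie in $\mathcal{L}_o^{edge}$ (so that the portrait is defined there) and that the repeated applications of $par$ and $comm$ compose correctly. This is the kind of step where condition $end$ is also needed, to ensure that local actions at words with the same endpoint agree, so that sliding does not change the relevant map. I would isolate the statement "the portrait carries innermost cancellations to innermost cancellations" as a sublemma and prove it carefully; everything else is then formal. (One could alternatively cite the analogous manipulation from the proof of Theorem \ref{thm:characterization.of.cubical.portraits}, or Lemma \ref{lem:moves.are.carried.over}, if those are set up to handle exactly this; but since the present lemma is stated and used before that proof, a direct argument from the five conditions is cleanest.)
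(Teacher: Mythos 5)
Your forward direction is sound and is essentially the paper's argument: the paper also proves the lemma by transporting innermost cancellations through the portrait using $comm$, $par$, $inv$, $end$ together with Lemma \ref{lem:innermost.cancellation.reduction}. The genuine gap is in your converse direction ($\sigma(v)$ reducible $\Rightarrow v$ reducible). You dispose of it in one clause by saying that the inverse portrait of $\sigma$ ``also satisfies the cubical conditions'' and running the same sublemma for $\sigma^{-1}$. Conditions $tree$, $comm$, $par$, $inv$ for the inverse portrait are indeed formal consequences of those for $\sigma$, but condition $end$ is not: $end$ for $\sigma^{-1}$ says that whenever $\sigma(v)^{\circ} = \sigma(w)^{\circ}$ the local maps $\pre{v}{\sigma}$ and $\pre{w}{\sigma}$ agree, and the only visible route to this is to show $\sigma(v)^{\circ} = \sigma(w)^{\circ} \Rightarrow v^{\circ} = w^{\circ}$. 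That implication is exactly the hard direction of Proposition \ref{prop:cubical.isometries.are.well.defined.on.vertices}, whose proof (via the ``$\Leftarrow$'' half of Lemma \ref{lem:moves.are.carried.over}) is the substantial content of the appendix. So the symmetry you invoke is true but not free, and it is precisely the kind of statement you said you wanted to avoid citing; as written, your proof has a hole at this point.

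The fix does not require the inverse portrait at all, and this is how the paper argues. Work inside your induction, where $v$ is reduced and $\sigma(v)$ is already known to be reduced, and suppose $\sigma(v)\,\pre{v}{\sigma}(s)$ is reducible. Then the innermost cancellation in the image must involve the last letter, pairing $\pre{s_1\dots s_{j-1}}{\sigma}(s_j)$ with $\pre{v}{\sigma}(s)$. Lemma \ref{lem:innermost.cancellation.reduction} gives commutation of $\pre{s_1\dots s_{j-1}}{\sigma}(s_j)$ with the later image letters; since $comm$ is a biconditional, these commutations pull back to $[s_j,s_k]=1$ for $k>j$, producing $v' = s_1\dots\hat{s_j}\dots s_n$ with $v^{\circ} = v's_j^{\circ}$; then $par$, $end$ (for $\sigma$ itself), $inv$, and injectivity of $\pre{v's_j}{\sigma}$ force $s = s_j^{-1}$, so $vs$ is reducible, a contradiction. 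This uses only the five conditions on $\sigma$ and the same sliding bookkeeping you already set up for the forward direction. (Alternatively you could first prove Proposition \ref{prop:cubical.isometries.are.well.defined.on.vertices} and then your inverse-portrait argument goes through without circularity, since the appendix does not use the present lemma, but that is heavier than needed.)
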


\begin{proof}
	We first show that $\pre{v}{\sigma}( \Sigma^{red}_v) \subseteq \Sigma'^{red}_{\sigma(v)}$. Let $v \in \mathcal{L}_o^{red}$ and $s \in \Sigma_v^{red}$. Our goal is to show that $\sigma(v) \sigma_v(s)$ is reduced. We do so by showing that $\sigma$ sends reduced paths to reduced paths. We show this by induction over the length of the path. The induction start is clear, since $\sigma(\epsilon) = \epsilon$. Now let $v = s_1 \dots s_n$ and $s \in \Sigma_v^{red}$ and assume by induction that $\sigma(v)$ is reduced. Suppose $\sigma(v) \sigma_v(s)$ is reducible. Then there exists $1 \leq j \leq n$ such that the pair $\sigma_{s_1 \dots s_{j-1}}(s_j)$ and $\sigma_v(s)$ form an innermost cancellation in $\sigma(vs)$. By Lemma \ref{lem:innermost.cancellation.reduction}, we conclude that $\sigma_{s_1 \dots s_{j-1}}(s_j)$ commutes with all letters of the word $\sigma(v)$ that appear after $\sigma_{s_1 \dots s_{j-1}}(s_j)$. By $comm$, this implies that $s_j$ commutes with $s_k$ for every $k > j$ and we obtain a word $v' = s_1 \dots \hat{s_j} \dots s_n$ such that $v^{\circ} = v's_j^{\circ}$. By $par$, we obtain that $\sigma_{s_1 \dots s_{j-1}}(s_j) = \sigma_{v'}(s_j)$. By $end$, we obtain that $\sigma_v(s) = \sigma_{v' s_j}(s)$.
	
	By construction, $\sigma_{v'}(s_j)$ and $\sigma_{v' s_j}(s)$ form an innermost cancellation in the word $\sigma(v' s_j s)$ and thus $\sigma_{v' s_j}(s) = \sigma_{v'}(s_j)^{-1}$. On the other hand, $inv$ implies that $\sigma_{v'}(s_j)^{-1} = \sigma_{v' s_j}(s_j^{-1})$. Since $\sigma_{v's_j}$ is injective, this implies that $s = s_j^{-1}$ and thus $v' s_j s$ is reducible. Therefore, $v s$ is reducible with the pair $(s_j, s)$ being an innermost cancellation. This contradicts the assumption that $s \in \Sigma_{v}^{red}$. We conclude that $\sigma_v( \Sigma_v^{red} ) \subseteq \Sigma'^{red}_{\sigma(v)}$.
	
	Suppose now that $vs$ is reducible, but $\sigma(v) \sigma_v(s)$ is reduced. We find $1 \leq j \leq n$ such that $s_j$ and $s$ form an innermost cancellation of the word $vs$. By an analogous argument as above, we conclude that $\sigma_{s_1 \dots s_{j-1}}(s_j)$ and $\sigma_v(s)$ form an innermost cancellation of the word $\sigma(v) \sigma_v(s)$, contradicting the fact that $\sigma(v) \sigma_v(s)$ is reduced. We conclude that $\sigma_v(\Sigma_v^{red}) = \Sigma'^{red}_{\sigma(v)}$.
\end{proof}

In fact, any cubical portrait on the subtree $T^{red}_o$ has a unique extension to a cubical portrait on $T^{edge}_o$ which thus induces a cubical isometry.

\begin{theorem} \label{thm:canonical.portrait.extension}
	Any cubical portrait $(\pre{v}{\sigma})_{v \in \mathcal{L}_o^{red}}$ on $T_o^{red}$ to $\epsilon$ in $T'^{edge}_{o'}$ has a unique extension to a cubical portrait on $T_o^{edge}$ to $\epsilon$ in $T'^{edge}_{o'}$. In particular, any cubical portrait on $T_o^{red}$ to $\epsilon$ in $T'^{edge}_{o'}$ induces a cubical isometry $X \rightarrow Y$ that sends $o$ to $o'$.
\end{theorem}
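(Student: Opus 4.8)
The strategy is to build the extension explicitly and prove it is cubical by induction on the distance of a vertex of $T_o^{edge}$ from $T_o^{red}$ (equivalently, on how reducible the corresponding edge-path is). Concretely, given $v \in \mathcal{L}_o^{edge}$ which is reducible, pick an innermost cancellation $(s_i, s_j)$ inside $v = s_1 \dots s_n$; by Lemma \ref{lem:innermost.cancellation.reduction} we have $s_j = s_i^{-1}$, the letters $s_i, \dots, s_j$ all commute with $s_i$, and $v^{\circ} = (s_1 \dots \hat{s_i} \dots \hat{s_j} \dots s_n)^{\circ}$. Write $v' := s_1 \dots \hat{s_i} \dots \hat{s_j} \dots s_n$, which is strictly closer to $T_o^{red}$ (has one fewer hyperplane-crossing). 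We are then \emph{forced} to define $\pre{v}{\sigma} := \pre{v'}{\sigma}$ by condition $end$, since $v^{\circ} = v'^{\circ}$. This simultaneously establishes uniqueness of the extension. The first thing I would check is that this definition is well-posed, i.e. independent of which innermost cancellation we chose and of the order in which we peel cancellations off: two such reductions $v \to v'$ and $v \to v''$ land on words with $v'^{\circ} = v''^{\circ} = v^{\circ}$ which are each closer to $T_o^{red}$, so by the inductive hypothesis (condition $end$ already verified for shorter/less-reducible words) $\pre{v'}{\sigma} = \pre{v''}{\sigma}$; a confluence argument handles the general case. Throughout, $\sigma(v) = \sigma(v')$ as well (using Lemma \ref{lem:innermost.cancellation.reduction} on the target side together with $par$, $inv$, $comm$, exactly as in the proof of Lemma \ref{lem:cubical.implies.reduction.preserving}), so the extension also reduces reducible target words in the matching way.

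\textbf{Verifying the five conditions.} With the extension defined, I would check $tree$, $comm$, $par$, $inv$, $end$ for arbitrary vertices of $T_o^{edge}$ by induction on reducibility, reducing each to the already-known statement on $T_o^{red}$. For $end$: if $v^{\circ} = w^{\circ}$ with $v$, say, reducible, reduce $v$ to $v'$ with $v'^{\circ} = v^{\circ} = w^{\circ}$; by construction $\pre{v}{\sigma} = \pre{v'}{\sigma}$ and by induction $\pre{v'}{\sigma} = \pre{w}{\sigma}$ (reducing $w$ too if necessary, down to the base case of reduced words, where it holds because on $\mathcal{L}_o^{red}$ the given portrait is assumed cubical, hence satisfies $end$). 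Condition $comm$ at a reducible $v$ is immediate: $\pre{v}{\sigma} = \pre{v'}{\sigma}$ and $\Sigma_v^{edge} = \Sigma_{v'}^{edge}$ since $v^{\circ} = v'^{\circ}$, so it transports from $v'$. Condition $tree$ at $v$ likewise follows since $\sigma(v) = \sigma(v')$ (established above) and $\Sigma_v^{edge} = \Sigma_{v'}^{edge}$, so $\pre{v}{\sigma}(\Sigma_v^{edge}) = \pre{v'}{\sigma}(\Sigma_{v'}^{edge}) = \Sigma'^{edge}_{\sigma(v')} = \Sigma'^{edge}_{\sigma(v)}$. Conditions $par$ and $inv$ concern an edge $(v, vs)$; when $vs$ is reducible, I reduce both endpoints and use $v^{\circ} = v'^{\circ}$, $(vs)^{\circ} = (v's)^{\circ}$ together with the corresponding statement for the reduced representatives, invoking $end$ to match up the local actions — this is the bookkeeping-heavy part.

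\textbf{Main obstacle.} The genuine difficulty is the well-definedness of the extension: showing that the value $\pre{v}{\sigma}$ does not depend on the chosen innermost cancellation, and that peeling off cancellations in different orders yields the same reduced data. I expect to handle this with a diamond/confluence lemma for innermost cancellations — if $v$ admits two innermost cancellations, either they are ``disjoint'' (can be removed in either order, obviously confluent) or they interact, in which case Lemma \ref{lem:innermost.cancellation.reduction} forces enough commutation relations among the intervening letters to push one reduction past the other. Since every reducible word has strictly smaller reducibility after one such step and there are only finitely many steps to a reduced word, Newman's lemma (local confluence plus termination implies confluence) then gives a canonical reduced representative and a canonical induced local action, after which all five conditions follow by the routine inductions sketched above. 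Once uniqueness-and-well-definedness is in hand, the ``in particular'' clause is immediate: a cubical portrait on $T_o^{edge}$ to $\epsilon$ in $T'^{edge}_{o'}$ induces a cubical isometry $X \to Y$ sending $o$ to $o'$ by Theorem \ref{thm:characterization.of.cubical.portraits}.
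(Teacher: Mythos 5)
Your overall architecture matches the paper's (the extension is forced by $end$, one verifies the five conditions, and Theorem \ref{thm:characterization.of.cubical.portraits} finishes), but you misplace the difficulty. Well-definedness of the extension needs no diamond lemma or Newman-style confluence argument: every vertex of $X$ is the endpoint of a reduced edge-path, and the given portrait on $T_o^{red}$ is assumed to satisfy $end$, so any two reduced words with the same endpoint already carry equal local maps. Hence one may simply define $\pre{v}{\sigma} := \pre{v_{red}}{\sigma}$ for an arbitrary reduced $v_{red}$ with $v_{red}^{\circ} = v^{\circ}$; your innermost-cancellation peeling terminates at such a representative, so independence of all choices is immediate. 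The machinery you flag as the ``main obstacle'' is thus unnecessary (though not wrong).

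The genuine gap is in your verification of $tree$. The assertion ``$\sigma(v) = \sigma(v')$'' is false as an identity of words: removing an innermost cancellation shortens the word by two letters, so $\sigma(v)$ and $\sigma(v')$ have different lengths. What is true is that they are related by commutation- and cancellation-moves, and what you actually need is that $\sigma(v) \in \mathcal{L}'^{edge}_{o'}$ and $\sigma(v)^{\circ} = \sigma(v')^{\circ}$; recall that $\Sigma'^{edge}_{\sigma(v)} = \emptyset$ by convention unless $\sigma(v) \in \mathcal{L}'^{edge}_{o'}$, so this membership \emph{is} the content of $tree$ and cannot be read off from $\Sigma_v^{edge} = \Sigma_{v'}^{edge}$. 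You also cannot outsource it to Lemma \ref{lem:cubical.implies.reduction.preserving}, since that lemma is proved for cubical portraits on all of $T_o^{edge}$, i.e.\ it already assumes $tree$ globally -- exactly what you are trying to establish. The correct tool is Lemma \ref{lem:moves.are.carried.over}, which requires only $comm$, $par$, $inv$, and $end$; the paper combines it with an induction over the length of $v$ (compare the prefix with a reduced representative, deduce that the two images have the same endpoint in $Y$, then use $tree$ on $T_o^{red}$ to place the next letter in the correct $\Sigma'^{edge}$-set), and this is where the real work of the theorem sits. Your induction on reducibility can be repaired along the same lines, but as written the key membership statement is assumed rather than proved; similarly, the $par$/$inv$ check at a pair $(v,vs)$ with $v_{red}s$ reducible (where one must route a reduced word through $(vs)^{\circ}$, or use $inv$ in reverse) is the bookkeeping you acknowledge but do not carry out.
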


We postpone the proof of this theorem to the appendix.




\section{Characterising $\CAT$ cube complexes with cubical edge-labelings} \label{sec:cubical.labeling.equivalent.to.special}

In this section, we characterise the $\CAT$ cube complexes that admit a $\Lambda$-invariant cubical edge-labeling. This is the content of the following theorem, which is a reformulation of {\cite[Theorem A]{HartnickMedici23b}}.

\begin{theorem} \label{thm:characterising.existence.of.cubical.edge.labelings}
	Let $X$ be a $\CAT$ cube complex, $\Lambda$ a group acting cocompactly by cubical isometries on $X$, and $S$ the quotient of $X$ under the action of $\Lambda$. Then $X$ admits a $\Lambda$-invariant cubical edge-labeling if and only if the action $\Lambda \curvearrowright X$ is free and $S$ is special.
\end{theorem}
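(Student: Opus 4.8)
The plan is to prove the two implications separately, the bridge being a dictionary between properties (1)--(4) of a cubical edge-labeling and the hyperplane conditions of Haglund--Wise. For the direction ``a $\Lambda$-invariant cubical edge-labeling $\ell$ exists $\Rightarrow$ the action is free'': if $g \in \Lambda$ fixes a point $p$, it stabilises the unique cube $C \cong [0,1]^n$ containing $p$ in its interior (as $C$ is determined by $p$ and $g(p) = p$) and acts on $C$ as a combinatorial automorphism, in particular permuting the vertices of $C$. Property (2) forces all edges of $C$ in a fixed coordinate direction $i$ to be parallel, hence to carry a common label $s_i$ (with $s_i^{-1}$ for the opposite orientation), and evaluating property (1) at the vertices of $C$ shows the $2n$ labels $s_i^{\pm 1}$ are pairwise distinct. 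Since $g$ carries the direction-$i$ edge of $C$ at a vertex $v$ to the direction-$\tau(i)$ edge at $g(v)$ for a fixed permutation $\tau$, $\Lambda$-invariance of $\ell$ combined with this distinctness forces $\tau = \id$ and $g(v) = v$ for every vertex $v$ of $C$. Once $g$ fixes a vertex $o$, the identity $\ell(g(e)) = \ell(e)$ and injectivity in (1) force $g(e) = e$ for all $e \in \overrightarrow{\mathcal{E}}(X)_o$, and by connectedness of $X^{(1)}$ we get $g = \id$. Hence $\Lambda \curvearrowright X$ is free, $S = X/\Lambda$ is a compact non-positively curved cube complex, and $X$ is its $\CAT$ universal cover.

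To show $S$ is special, I would exhibit a combinatorial local isometry $\phi \colon S \to \mathcal{S}_\Gamma$ into the Salvetti complex of a right-angled Artin group; since any cube complex admitting a local isometry to a Salvetti complex is special \cite{HaglundWise07}, this completes the implication. The labeling descends to $\bar\ell$ on $\overrightarrow{\mathcal{E}}(S)$ with the analogues of (1)--(4). Let $\Gamma$ be the graph with a vertex for each pair $\{s, s^{-1}\}$, $s \in \Sigma$, and an edge between $\{s, s^{-1}\}$ and $\{t, t^{-1}\}$ precisely when $[s,t] = 1$ (well defined and loop-free by the remarks after Definition~\ref{def:commuting.labels}). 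Define $\phi$ by collapsing every vertex of $S$ to the single vertex of $\mathcal{S}_\Gamma$, sending each oriented edge to the generator loop given by $\bar\ell$ (in the matching direction), and extending uniquely over every cube --- this is legitimate because the midcubes of a cube pairwise cross, so the labels at a corner pairwise commute. The induced map on each link is injective by property (1), and its image is a full subcomplex: a set of pairwise-commuting, pairwise-non-inverse labels occurring at a vertex $\bar v$ of $S$ corresponds, via property (3), to edges at $\bar v$ that pairwise span squares, and those span a cube because $S$, being non-positively curved, has flag links. So $\phi$ is a local isometry and $S$ is special.

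For the converse, assume the action is free (so that $X \to S$ is a covering) and $S$ is special. Two-sidedness lets me fix a transverse orientation of every hyperplane $\hat h$ of $S$; set $\Sigma = \{(\hat h, +), (\hat h, -)\}$ with its evident fixed-point-free involution, label each oriented edge of $S$ by its dual hyperplane and the direction in which it crosses it, and pull this back along $X \to S$ to a $\Lambda$-invariant map $\ell$. Property (4) and $\Lambda$-invariance are built in. Property (1) holds because two distinct oriented edges at a vertex of $X$ with equal labels would project to two distinct edges at a vertex of $S$ dual to a common hyperplane, which contradicts the absence of self-crossing and self-osculation. Property (2) holds because parallel oriented edges of $X$ project to parallel oriented edges of $S$, and these cross a common hyperplane in a common direction by two-sidedness. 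For property (3): given $e, f$ at $o$ and $e', f'$ at $o'$ with matching labels, if $e, f$ span a square then the image square in $S$ shows that the two dual hyperplanes cross in $S$, so no-inter-osculation forces $\bar e', \bar f'$ to span a square in $S$, which lifts back to a square through $o'$ by uniqueness of edge-lifts together with property (1); the converse is symmetric, and the case where both labels involve the same hyperplane cannot occur, since $S$ has no two distinct edges at a vertex dual to a common hyperplane.

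I expect the first implication to be the main obstacle, and within it the passage from the local data (1)--(4) to the global hyperplane conditions; packaging that passage as a local isometry to a Salvetti complex is the cleanest route, and its one non-formal ingredient is that a non-positively curved cube complex has flag links, which is exactly what upgrades ``the edges at a vertex pairwise span squares'' (read off from property (3)) to ``they span a cube'', giving the fullness of $\phi$ on links. Establishing freeness beforehand --- so that $S$ is a genuine non-positively curved cube complex with universal cover $X$ --- is a prerequisite, and is the other place where properties (1) and (2) do essential work; I anticipate the routine bookkeeping there (the cube labels $s_i^{\pm 1}$, parallelism, propagation to all of $X$) to absorb most of the writing.
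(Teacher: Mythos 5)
Your proposal is correct, and it splits into a part that mirrors the paper and a part that takes a genuinely different route. The freeness argument (a label-preserving element fixing a point fixes the minimal cube containing it vertexwise, then propagates over $X^{(1)}$ by property (1) and invariance) is in substance the paper's own argument in the second lemma of Section \ref{sec:cubical.labeling.equivalent.to.special}, except that the paper phrases the propagation step via trivial local actions and the portrait machinery rather than induction along the $1$-skeleton; likewise your converse construction (co-orient each hyperplane of $S$ using two-sidedness, label oriented edges by the dual hyperplane and crossing direction, pull back along the covering, and kill the pathologies one by one to verify (1)--(4)) is essentially Lemma \ref{lem:special.implies.cubical.edge.labeling}, since an oriented parallel class in $S$ is exactly a two-sided hyperplane with a chosen crossing direction. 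Where you genuinely diverge is the implication ``labeling $\Rightarrow$ $S$ special'': the paper (Lemma \ref{lem:labeling.implies.special}) checks directly that each of the four hyperplane pathologies in $S$ would lift to a violation of one of properties (1)--(4), whereas you build the commutation graph $\Gamma$ on the pairs $\{s,s^{-1}\}$, define a combinatorial map $S \to \mathcal{S}_\Gamma$, verify it is a local isometry (injectivity on links from descended property (1), fullness from property (3) plus flagness of links), and invoke the Haglund--Wise characterisation of specialness via local isometries to Salvetti complexes \cite{HaglundWise07}. The paper's check is self-contained and needs nothing beyond the definitions of the pathologies; your route is more conceptual, produces the canonical ``special coloring to RAAG'' map (close in spirit to Genevois' special colorings mentioned in the introduction), but outsources the hyperplane analysis to a nontrivial external theorem and requires the flag-link fullness step you correctly identify as the non-formal ingredient. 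Two small points to tidy up if you write this out: distinctness of the $2n$ labels $s_i^{\pm 1}$ on a cube needs property (4) (fixed-point-freeness of the involution) for the case $s_i \neq s_i^{-1}$, not only property (1) at the corners; and in the local-isometry construction you should record that opposite edges of a square carry the same label with consistent orientation (property (2)), so that square boundaries read $sts^{-1}t^{-1}$ and the map to $\mathcal{S}_\Gamma$ is well defined on $2$-cells before extending over higher cubes.
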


A cube complex is called special, if four pathologies are excluded from the behaviour of its hyperplanes. See {\cite[Section 6.b]{Wise21}} for a precise definition and a visualisation of the four pathologies. We break the proof down into several Lemmas.

\begin{lemma} \label{lem:special.implies.cubical.edge.labeling}
	If $S$ is special and $\Lambda \curvearrowright X$ is free, then $X$ admits a $\Lambda$-invariant cubical edge-labeling.
\end{lemma}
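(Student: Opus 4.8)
The plan is to construct the cubical edge-labeling explicitly from the hyperplane structure of the special cube complex $S$, and then verify the four defining properties using the four excluded pathologies.

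First I would set up the label set. Since $S$ is special, Haglund and Wise attach to every oriented edge of $S$ a "canonical form" recording the hyperplane it crosses together with a choice of side/direction; concretely one can let $\Sigma$ be the set of pairs $(\hat h, \text{side})$ where $\hat h$ ranges over hyperplanes of $S$ and the two sides of $\hat h$ give the two orientations, with the involution $\cdot^{-1}$ swapping the two sides of a hyperplane. Because $\Lambda \curvearrowright X$ is free, $X \to S$ is a covering map, and hyperplanes of $X$ map to hyperplanes of $S$; I define $\ell : \overrightarrow{\mathcal{E}}(X) \to \Sigma$ by sending an oriented edge $e$ to the pair consisting of the hyperplane of $S$ dual to the image of $e$ and the side of that hyperplane towards which $e$ points (this last notion makes sense using the orientation of $e$ together with the local structure of the hyperplane carrier). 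This is $\Lambda$-invariant by construction, since $\Lambda$ acts by deck transformations which commute with the projection $X \to S$. Surjectivity onto $\Sigma$ (after discarding labels not hit) and the existence of the fixed-point-free involution (property (4)) are immediate.

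Next I would verify properties (1)--(3), and this is where the four pathologies of specialness enter. Property (1), injectivity of $\ell$ on $\overrightarrow{\mathcal{E}}(X)_o$: if two distinct edges at $o$ got the same label, their dual hyperplanes in $S$ would coincide and be approached from the same side at the projected vertex, forcing either a self-osculation or two edges of a square sharing a corner with the same dual hyperplane (a self-intersecting hyperplane) — both excluded. Property (2), parallel edges get equal labels: parallel edges are dual to the same hyperplane of $X$, hence to the same hyperplane of $S$; and being connected by an edge of a square forces them to point to the same side, so the labels agree. Property (3), the "span a square" condition depends only on labels: if $e,f$ at $o$ span a square, their dual hyperplanes in $S$ cross; conversely if $e',f'$ at $o'$ carry the same labels but do not span a square, one obtains hyperplanes of $S$ that both cross (witnessed at $o$) and inter-osculate (witnessed at $o'$), which is the inter-osculation pathology, excluded by specialness. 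I would organize property (3) carefully as the two hyperplanes crossing at one vertex and osculating at another is precisely one of Wise's four forbidden configurations.

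The main obstacle I expect is \textbf{property (3)}: phrasing "span a square if and only if span a square" purely in terms of the hyperplane combinatorics of $S$, and deducing a clean contradiction with inter-osculation, requires some care, because one must rule out the degenerate possibility that the two relevant hyperplanes of $S$ are actually equal (handled by property (1)/no self-intersection and no self-osculation) before invoking the inter-osculation pathology for distinct hyperplanes. A secondary technical point is making precise the notion of "the side of the hyperplane that an oriented edge points toward" and checking it is well-defined and locally determined; this uses that the hyperplane carrier in a (special, hence in particular with embedded two-sided hyperplanes) cube complex is a product, so "side" is globally consistent along the carrier. Once these local-to-global bookkeeping issues are pinned down, the verification of all four properties is routine, and the lemma follows.
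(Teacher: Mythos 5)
Your proposal is correct and follows essentially the same route as the paper: you label oriented edges by the hyperplane of $S$ they cross together with the direction of crossing (the paper phrases this as labeling oriented parallel classes), lift the labeling through the covering map $X \rightarrow S$ obtained from freeness, and derive properties (1)--(4) from the excluded pathologies exactly as the paper does (self-osculation/self-intersection for (1), inter-osculation for (3), two-sidedness for the fixed-point-free involution in (4)). The only detail to make explicit in your property (1) argument is that the two distinct edges at $o$ have distinct images in $S$ (local injectivity of the covering projection), which is precisely where the paper invokes freeness at that step.
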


\begin{proof}
	Suppose $S$ is special. We label the oriented edges in $S$ by labeling every oriented parallel class of oriented edges by a unique label. Let $\Sigma$ be the set of these labels. We denote this labeling by $\ell_S : \overrightarrow{\mathcal{E}}(S) \rightarrow \Sigma$. We highlight that two oriented edges have the same label if and only if they are parallel. Let $\cdot^{-1} : \Sigma \rightarrow \Sigma$ be the map that sends the label of any oriented edge to the label of the edge obtained by reversing orientation. This map is well-defined since two oriented edges have the same label if and only if they are parallel and being parallel is preserved under reversal of orientation. By construction, $\cdot^{-1}$ is an involution. Since $S$ is special, every hyperplane $h$ in $S$ is two-sided. Therefore, $\cdot^{-1}$ has no fixed point. 
		
	Since $\Lambda$ acts freely on $X$ and $X$ is a cube complex, the projection map $\pi : X \rightarrow S$ is a covering map. In fact, since $X$ is simply connected, it is the universal covering of $S$. The oriented edge-labeling on $S$ lifts to $X$ and we denote this lift by $\ell : \overrightarrow{\mathcal{E}}(X) \rightarrow \Sigma$. By definition, this lift is $\Lambda$-invariant. We claim that $\ell$ is a cubical edge-labeling.
	
	We start by proving property (1) of cubical edge-labelings. Let $o \in X^{(0)}$ and suppose there are two distinct edges $e, e' \in \overrightarrow{\mathcal{E}}(X)_o$ such that $\ell(e) = \ell(e')$. By construction of $\ell$, this implies that $\ell_S( \pi(e) ) = \ell_S( \pi(e') )$. By construction of $\ell_S$, this implies that $\pi(e)$ and $\pi(e')$ are parallel oriented edges with the same starting point. Furthermore, $\pi(e) \neq \pi(e')$ since $\Lambda$ acts freely and $e$ and $e'$ are distinct edges that start at the same vertex. We conclude that the hyperplane crossed by both $\pi(e)$ and $\pi(e')$ self-osculates. However, since $S$ is special, it cannot contain a self-osculating hyperplane, a contradiction. This proves (1).
	
	Property (2) is an immediate consequence of the fact that parallel oriented edges in $S$ have the same label and $\ell$ is the lift of $\ell_S$.
	
	For property (3), consider vertices $o, o' \in X^{(0)}$ and edges $e, f \in \overrightarrow{\mathcal{E}}(X)_o$ and $e', f' \in \overrightarrow{\mathcal{E}}(X)_{o'}$ such that $\ell(e) = \ell(e')$ and $\ell(f) = \ell(f')$. Consider the projections $\pi(o), \pi(o')$ with outgoing edges $\pi(e), \pi(f)$ and $\pi(e'), \pi(f')$ respectively. Since $\ell$ is the lift of $\ell_S$, we have that $\ell_S(\pi(e)) = \ell_S(\pi(e'))$ and $\ell_S(\pi(f)) = \ell_S(\pi(f'))$. Since two oriented edges in $S$ have the same label if and only if they cross the same hyperplane, we find hyperplanes $\hat{h}$ and $\hat{k}$ such that $\pi(e)$ and $\pi(e')$ cross $\hat{h}$ in the same direction and $\pi(f)$ and $\pi(f')$ cross $\hat{k}$ in the same direction. If $\pi(e)$ and $\pi(f)$ span a square, while $\pi(e')$ and $\pi(f')$ do not, then $\hat{h}$ and $\hat{k}$ inter-osculate, which cannot happen in a special cube complex. Thus, $\pi(e)$ and $\pi(f)$ span a square if and only if $\pi(e')$ and $\pi(f')$ span a square. Since $\pi : X \rightarrow S$ is a covering map, this property lifts to $X$ and we conclude that $e$ and $f$ span a square if and only if $e'$ and $f'$ span a square. 
	
	For property (4), note that, by construction, $\ell_S(e^{-1}) = \ell_S(e)^{-1}$ where we use the definition of $\cdot^{-1}$ introduced in the initial construction. This property lifts to $\ell$ which proves property (4). We conclude that $\ell$ is a $\Lambda$-invariant cubical edge-labeling.
\end{proof}

\begin{lemma}
	If $X$ admits a $\Lambda$-invariant cubical edge-labeling, then $\Lambda$ acts freely on $X$.
\end{lemma}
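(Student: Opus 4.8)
The plan is to prove the contrapositive in spirit: assume $X$ admits a $\Lambda$-invariant cubical edge-labeling $\ell$ and show that any $g \in \Lambda$ fixing a point must be the identity. So suppose $g \in \Lambda$ and $g(o) = o$ for some vertex $o \in X^{(0)}$. The strategy is to show that $g$ fixes the entire $1$-skeleton of $X$ pointwise; since a cubical isometry is determined by its action on the $1$-skeleton, this forces $g = \id$. The key mechanism is that $\Lambda$-invariance of $\ell$ together with property (1) (local injectivity of $\ell$) pins down the image of any oriented edge at a fixed vertex.

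First I would argue that $g$ fixes every oriented edge starting at $o$. Let $e \in \overrightarrow{\mathcal{E}}(X)_o$. Then $g(e)$ is an oriented edge starting at $g(o) = o$, so $g(e) \in \overrightarrow{\mathcal{E}}(X)_o$. By $\Lambda$-invariance of $\ell$, we have $\ell(g(e)) = \ell(e)$. By property (1), $\ell$ restricted to $\overrightarrow{\mathcal{E}}(X)_o$ is injective, so $g(e) = e$. In particular $g$ fixes the endpoint of $e$; hence $g$ fixes every vertex adjacent to $o$, and every edge incident to $o$.

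Next I would propagate this by induction on combinatorial distance from $o$. Suppose $g$ fixes every vertex at distance $\le n$ from $o$ and every oriented edge incident to such a vertex. Let $o'$ be a vertex at distance $n$ and $e \in \overrightarrow{\mathcal{E}}(X)_{o'}$. Since $g$ fixes $o'$, the same argument applies: $g(e) \in \overrightarrow{\mathcal{E}}(X)_{o'}$, $\ell(g(e)) = \ell(e)$ by $\Lambda$-invariance, and property (1) gives $g(e) = e$. Since $X^{(1)}$ is connected, every vertex and every edge is reached this way, so $g$ fixes $X^{(1)}$ pointwise. A cubical isometry restricting to the identity on $X^{(1)}$ is the identity (it is determined by its action on the $1$-skeleton, as noted in the paper), so $g = \id$. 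Therefore $\Lambda$ acts freely.

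The proof is essentially routine; there is no serious obstacle. The one point worth stating carefully is the very first step — that local injectivity of the labeling at a \emph{fixed} vertex forces an isometry fixing that vertex to fix all incident oriented edges — since this is the entire content of the argument, and the rest is a standard connectedness/induction bootstrap. One could even phrase the whole thing without induction: the set of vertices fixed by $g$ together with all their incident edges is nonempty (contains $o$), and the argument above shows it is both open and closed in $X^{(1)}$, hence all of it by connectedness.
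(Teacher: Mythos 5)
Your argument for the vertex case is sound and is essentially the paper's own argument in different clothing: $\Lambda$-invariance of $\ell$ plus local injectivity (property (1)) forces the local action at any fixed vertex to be trivial, and this propagates along the connected $1$-skeleton (the paper phrases this as saying all local actions $\pre{v}{\sigma(\lambda)}$ are the identity, hence $\lambda = \Id_X$).

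However, there is a genuine gap: you only rule out nontrivial elements fixing a \emph{vertex}, but freeness of the action means no nontrivial element fixes \emph{any} point of $X$. A priori, an element $\lambda \in \Lambda$ could fix, say, the barycenter of a square or a higher-dimensional cube while permuting that cube's vertices, and then your bootstrap never gets started, since its very first step requires a fixed vertex $o \in X^{(0)}$. The paper treats this case separately: if $\lambda$ fixes a point $p$ that is not a vertex, let $C$ be the lowest-dimensional cube containing $p$; then $\lambda(C) = C$, and since $\lambda$ preserves the labeling and each vertex of $C$ is uniquely determined by the set of labels on its outgoing edges inside $C$ (these labels record on which side of each hyperplane of $C$ the vertex lies, using property (4) to distinguish the two orientations), $\lambda$ must fix every vertex of $C$, which reduces to the vertex case you handled. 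Adding this reduction would complete your proof.
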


\begin{proof}
	Let $\ell$ be a $\Lambda$-invariant cubical edge-labeling, $o \in X^{(0)}$, and $\lambda \in \Lambda$ such that $\lambda(o) = o$. Therefore, $\lambda \in G_o$ and we can describe $\lambda$ in terms of the family of local actions $(\pre{v}{\sigma(\lambda)})_{v \in \mathcal{L}^{edge}_o}$. Since $\ell$ is $\Lambda$-invariant, $\lambda$ preserves the edge-labeling. Thus, $\pre{v}{\sigma(\lambda)} = \id$ for every $v \in \mathcal{L}^{edge}_o$ and $\lambda = \Id_X$. We conclude that any $\lambda \in \Lambda$ that fixes a vertex is the identity.

	
	

	Now suppose $\lambda$ fixes some point $p \in X$ that may not be a vertex. Let $C$ be the lowest-dimensional cube that contains $p$. Since $\lambda$ fixes $p$, it has to send $C$ to itself while preserving the labels of the oriented edges of $C$. It is an easy exercise that every vertex $q$ in $C$ is uniquely determined by the set of labels carried by its outgoing edges in $C$ (because these labels indicate on which side of every hyperplane in $C$ the vertex $q$ lies). Therefore, since $\lambda$ preserves the labeling, it has to fix the vertices in $C$. The previous argument thus applies and $\lambda = \Id_X$.
\end{proof}

\begin{lemma} \label{lem:labeling.implies.special}
	If $X$ admits a $\Lambda$-invariant cubical edge-labeling, then $S$ is special.
\end{lemma}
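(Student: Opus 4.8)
The plan is to prove the contrapositive-style statement directly: assuming a $\Lambda$-invariant cubical edge-labeling $\ell$ on $X$ exists, I will rule out each of the four hyperplane pathologies in $S$ one at a time. Since $\Lambda$ acts freely on $X$ (by the previous lemma), the quotient map $\pi : X \to S$ is a covering map, and in fact the universal covering since $X$ is simply connected. The labeling $\ell$ descends to a labeling $\ell_S$ on $\overrightarrow{\mathcal{E}}(S)$ precisely because $\ell$ is $\Lambda$-invariant; this is the object I will work with, together with the observation that hyperplane pathologies in $S$ lift (locally, near any offending configuration) to corresponding configurations in $X$ where the properties of a cubical edge-labeling apply.

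The four pathologies to exclude are: (i) a self-intersecting hyperplane; (ii) a one-sided hyperplane; (iii) a self-osculating hyperplane; (iv) two inter-osculating hyperplanes. I would handle them as follows. For (ii): property (4) of a cubical edge-labeling gives a fixed-point-free involution $\cdot^{-1}$ on $\Sigma$ with $\ell(e^{-1}) = \ell(e)^{-1}$; a hyperplane $\hat h$ in $S$ is two-sided iff its two ``orientations'' carry distinct labels, and since $\cdot^{-1}$ has no fixed point, $\ell_S(e) \neq \ell_S(e^{-1})$ for every dual edge $e$, so no hyperplane is one-sided. For (i): if a hyperplane $\hat h$ in $S$ self-intersects, lift the intersecting square to $X$; there one gets a single square two of whose edges are dual to the same hyperplane of $X$, which forces two edges with the same label to span a square with themselves — but by properties (1) and (3), a label never commutes with itself (indeed $[s,s]\neq 1$, as already noted after Definition \ref{def:commuting.labels}), a contradiction. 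For (iii): a self-osculating hyperplane $\hat h$ in $S$ gives a vertex with two distinct outgoing edges dual to $\hat h$; lifting to $X$ and using that $\pi$ is a covering near that vertex, one obtains a vertex $o \in X^{(0)}$ with two distinct edges in $\overrightarrow{\mathcal{E}}(X)_o$ carrying the same label, contradicting property (1) — this is essentially the reverse of the argument in Lemma \ref{lem:special.implies.cubical.edge.labeling}. For (iv): if hyperplanes $\hat h, \hat k$ in $S$ inter-osculate, then at one vertex the dual edges span a square and at another vertex (or via an osculation) they do not; lift both configurations to $X$, where we get $o, o' \in X^{(0)}$, edges $e, f \in \overrightarrow{\mathcal{E}}(X)_o$ and $e', f' \in \overrightarrow{\mathcal{E}}(X)_{o'}$ with $\ell(e) = \ell(e')$, $\ell(f) = \ell(f')$, such that $e, f$ span a square while $e', f'$ do not — contradicting property (3).

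The main obstacle I anticipate is bookkeeping the lifts carefully: the pathologies are statements in $S$ about hyperplanes and their carriers, and I need to make sure that each offending local picture genuinely lifts to a local picture in $X$ to which the (strictly local, vertex-based) conditions (1)--(4) can be applied. Because $\pi$ is a covering map, any edge, square, or osculation/inter-osculation configuration in $S$ lifts to an isomorphic configuration in $X$ in a neighborhood of a chosen lift of a base vertex; the labels are preserved since $\ell$ is the pullback of $\ell_S$. Once the correct lifts are set up, each of the four cases reduces to a one-line contradiction with the corresponding property of a cubical edge-labeling, and, combined with Lemmas \ref{lem:special.implies.cubical.edge.labeling} and the freeness lemma, this completes the proof of Theorem \ref{thm:characterising.existence.of.cubical.edge.labelings}.
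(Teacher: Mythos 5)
Your proposal is correct and follows essentially the same route as the paper: since the labeling is $\Lambda$-invariant it descends to $S$, and each of the four pathologies is ruled out by lifting the offending local configuration back to $X$ and contradicting property (4) (one-sidedness), property (1) (self-intersection and self-osculation), and property (3) (inter-osculation). One small slip: in the self-intersection case the two lifted edges of the square in $X$ are dual to \emph{distinct} hyperplanes of $X$ (hyperplanes of a $\CAT$ cube complex do not self-intersect); what the argument actually needs, and what you in fact use, is that they carry the same label, which already contradicts property (1) because they share a vertex.
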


\begin{proof}
	Let $\ell$ be a $\Lambda$-invariant cubical edge-labeling on $X$. We have to check that the four pathologies that are excluded in special cube complexes do not occur in $S$. First, we note that, since $\ell$ is $\Lambda$-invariant, the edge-labeling on $X$ descends to an edge-labeling $\ell_S : \overrightarrow{\mathcal{E}}(S) \rightarrow \Sigma$ on $S$. Note that, if $e, e' \in \overrightarrow{\mathcal{E}}(S)$ are in the same oriented parallel class, then $\ell_S(e) = \ell_S(e')$ since property (2) of cubical edge-labelings descends from $X$ to $S$.
		
	Suppose there exists a hyperplane $\hat{h}$ in $S$ that self-intersects. Then there exist two oriented edges $e, e'$ in $S$ that are parallel and span a square. Since $e$ and $e'$ are parallel, we have that $\ell_S(e) = \ell_S(e')$. The square spanned by $e$ and $e'$ lifts to a square in $X$ spanned by two edges $f$ and $f'$ that satisfy $\ell(f) = \ell_S(e) = \ell_S(e') = \ell(f')$. This contradicts property (1) of a cubical edge-labeling. We thus conclude that $\hat{h}$ cannot exist.
	
	Suppose there exists a hyperplane $\hat{h}$ in $S$ that is not two-sided. Then there exists an oriented edge $e$ in $S$ such that $\ell_S(e) = \ell_S(e^{-1})$. We lift $e$ to an edge $f$ in $X$ which then satisfies $\ell(f) = \ell(f^{-1})$. However, by property (4) of cubical edge-labelings, $\ell(f^{-1}) = \ell(f)^{-1}$ and $\cdot^{-1}$ is fixed-point free. Thus, we arrive at a contradiction and every hyperplane in $S$ has to be two-sided.
	
	Suppose there exists a self-osculating hyperplane $\hat{h}$ in $S$. Then there exists a vertex in $S$ with two outgoing edges $e, e'$ that are parallel and thus satisfy $\ell_S(e) = \ell_S(e')$. This lifts to a vertex in $X$ with two outgoing edges that have the same label, contradicting property (1).
	
	Suppose there exist two hyperplanes $\hat{h}, \hat{k}$ in $S$ that inter-osculate. Then there exist two vertices $p, p'$ in $S$ with outgoing edges $e_1, e_2 \in \overrightarrow{\mathcal{E}}(S)_{p}$ and $e'_1, e'_2 \in \overrightarrow{\mathcal{E}}(S)_{p'}$ such that $e_1$ and $e_2$ span a square, $e'_1$ and $e'_2$ do not span a square, $\ell_S(e_1) = \ell_S(e'_1)$, and $\ell_S(e_2) = \ell_S(e'_2)$. This lifts to two vertices $q, q'$ in $X$ and edges $f_1, f_2 \in \overrightarrow{\mathcal{E}}(X)_{q}$ and $f'_1, f'_2 \in \overrightarrow{\mathcal{E}}(X)_{q'}$ such that $f_1$ and $f_2$ span a square, $f'_1$ and $f'_2$ do not span a square, $\ell(f_1) = \ell(f'_1)$, and $\ell(f_2) = \ell(f'_2)$. This contradicts property (3).
	
	We conclude that all four pathologies do not occur in $S$. Therefore $S$ is special.
\end{proof}

The three lemmas above together prove Theorem \ref{thm:characterising.existence.of.cubical.edge.labelings}. This result tells us that we can easily produce a $\CAT$ cube complex with an action $\Lambda \curvearrowright X$ and a $\Lambda$-invariant cubical edge-labeling by starting with a compact, special, non-positively curved cube complex $S$ and taking its universal covering. Furthermore, every cube complex to which our results apply can be obtained in this way. Theorem \ref{thm:characterising.existence.of.cubical.edge.labelings} implies in particular that, if $X$ admits a $\Lambda$-invariant cubical edge-labeling, then $\Lambda$ acts geometrically on a locally finite (and thus proper) $\CAT$ cube complex. By the Milnor-Schwartz-Lemma, this implies that $\Lambda$ is finitely generated.

\begin{remark} \label{rem:Salvetti.complexes}
	A key example that admits $\Lambda$-invariant cubical edge-labelings are Salvetti-complexes of right-angled Artin groups. These corresponds exactly to those pairs $(X, \Lambda)$ where $\Lambda$ acts vertex-transitively. The theory of $\Aut(X)$ becomes significantly simpler when we restrict to this subclass of $\CAT$ cube complexes with special, compact quotient, as can be seen for example in the form of the finite generating set described in Theorem \ref{thm:fin.top.gen.set}.
\end{remark}




\section{Topological generators for stabilizer subgroups} \label{sec:topological.generators.for.stabilizers}

Let $o \in X^{(0)}$ be a vertex. We denote the stabilizer of $o$ by $G_o < \Aut(X)$. In this section, we present a technique that produces a family of cubical isometries that generates a dense subgroup of $G_o$. We then use this family to show that $\Aut(X)$ is topologically finitely generated, that is, it has a finitely generated dense subgroup. This technique relies on an assumption about $X$. It is unclear whether this assumption is always true. However, we know it to be true for Salvetti-complexes of right-angled Artin groups (see \cite{HartnickMedici23b}). To formulate this assumption, we need some terminology.

\begin{definition} \label{def:vertex.types}
	Let $v, w \in \mathcal{L}_o^{red}$ such that $v \neq \epsilon$. Let $w'$ be a reduced word such that $vw'^{\circ} = w^{\circ}$. We say that $w$ is
	\begin{itemize}
		\item of {\it Type 1 relative to $(o,v)$} if $vw'$ is reducible.
		
		\item of {\it Type 2 relative to $(o,v)$} if $vw'$ is reduced and there exists $s \in \reduc(v)$ such that $[w', s] = 1$.
		
		\item of {\it Type 3 relative to $(o,v)$} if $vw'$ is reduced and for every $s \in \reduc(v)$, $[w',s] \neq 1$.
	\end{itemize}
\end{definition}

\begin{remark}	\label{rem:equivalent.definition.of.vertex.types}
	A more geometric definition of Types is the following: Let $\hat{h}_1, \dots, \hat{h}_n$ be the hyperplanes that cross edges incident to $v^{\circ}$ and that separate $o$ from $v^{\circ}$. Let $h_i$ be the halfspace bounded by $\hat{h}_i$ that contains $v^{\circ}$. Then $w$ is of Type 1 relative to $(o,v)$ if and only if $w^{\circ}$ is not contained in $\cap_{i=1}^n h_i$. It is of Type 2 if and only if $w^{\circ}$ lies in $\cap_{i=1}^n h_i$ and is contained in the carrier of one of the $\hat{h}_i$. It is of Type 3 if and only if $w^{\circ}$ lies in $\cap_{i=1}^n h_i$ and does not lie in the carrier of any $\hat{h}_i$.
	
	It follows from this equivalence that the Type of a word $w$ only depends on its endpoint. Thus, we may also speak of the Type of a vertex relative to $(o,v)$. Note that $o$ is of Type 1 and $v$ is of Type 2 relative to $(o,v)$ for every $v \in \mathcal{L}_o^{red} \setminus \{ \epsilon \}$.
\end{remark}

Throughout this section, we will work with the $\ell^1$-metric on $X$. This is the metric obtained by equipping every cube $[0,1]^m$ in $X$ with the $\ell^1$-metric and taking the resulting shortest-path-metric. The distance between two vertices in the $\ell^1$-metric is equal to the number of hyperplanes separating the vertices. Furthermore, for every $v \in \mathcal{L}^{red}_o$, we have that $d_{\ell^1}(o, v^{\circ}) = length(v)$.

\begin{definition} \label{def:permutation.appearing.at.v}
	Let $v \in \mathcal{L}_o^{red}$ and $\tau : \Sigma_v^{edge} \rightarrow \Sigma_v^{edge}$ a bijection. We say that $\tau$ {\it appears at $v$ in $G_o$} if there exists a cubical isometry $g$ that fixes an $\ell^1$-ball of radius $r = length(v)$ and that satisfies $\pre{v}{\sigma(g)} = \tau$.
\end{definition}

\begin{notation} \label{not:Avt}
Let $v \in \mathcal{L}_o^{red}$ and let $\tau : \Sigma_v^{edge} \rightarrow \Sigma_v^{edge}$ be a bijection. If $v \neq \epsilon$, then the expression $A_{v, \tau}$ always denotes a cubical isometry that fixes all vertices of Type 1 and 2 relative to $(o,v)$ and that satisfies $\pre{v}{\sigma(A_{v,\tau})} = \tau$. If $v = \epsilon$, then $A_{\epsilon, \tau}$ denotes any cubical isometry that fixes $o$ and satisfies $\pre{\epsilon}{\sigma(A_{\epsilon,\tau})} = \tau$.

The cubical isometry $A_{v, \tau}$ is not unique in general and may not exist. When we write $A_{v, \tau}$, this denotes a choice of such a cubical isometry and requires that such a choice exists.
\end{notation}

\begin{lemma} \label{lem:properties.of.tau.that.appears.at.v}
	If $A_{v, \tau}$ exists, then $\tau$ appears at $v$ in $G_o$. Furthermore, if $\tau$ appears at $v$ in $G_o$, then $\tau$ preserves commutation and fixes $\langle \reduc(v) \rangle$ pointwise.
\end{lemma}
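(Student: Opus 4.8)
The plan is to prove the two implications separately, relying throughout on the facts that a cubical isometry is determined by its action on the $1$-skeleton, that the $1$-skeleton of a $\CAT$ cube complex has at most one edge between any two of its vertices, and that $d_{\ell^1}(o,v^\circ)=length(v)$ for every $v\in\mathcal{L}_o^{red}$.

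For the first implication, write $n=length(v)$. The case $v=\epsilon$ is immediate: by Notation~\ref{not:Avt} the isometry $A_{\epsilon,\tau}$ fixes $o$, i.e.\ the $\ell^1$-ball of radius $0$ around $o$, and satisfies $\pre{\epsilon}{\sigma(A_{\epsilon,\tau})}=\tau$. For $v\neq\epsilon$ I would first show that no vertex of Type~$3$ relative to $(o,v)$ lies at $\ell^1$-distance $\le n$ from $o$: if $w$ has Type~$3$, choose a reduced word $w'$ with $(vw')^\circ=w^\circ$; by Definition~\ref{def:vertex.types} the word $vw'$ is reduced, and since $v^\circ$ has Type~$2$ (Remark~\ref{rem:equivalent.definition.of.vertex.types}) we have $w^\circ\neq v^\circ$, so $w'$ is non-empty and hence $d_{\ell^1}(o,w^\circ)=length(vw')=n+length(w')>n$. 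Thus every vertex at $\ell^1$-distance $\le n$ from $o$ has Type~$1$ or~$2$, so $A_{v,\tau}$, which by Notation~\ref{not:Avt} fixes all vertices of Types~$1$ and~$2$, fixes the $\ell^1$-ball of radius $n$ around $o$; together with $\pre{v}{\sigma(A_{v,\tau})}=\tau$ this shows that $\tau$ appears at $v$ in $G_o$.

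For the second implication, let $g$ be a cubical isometry fixing the $\ell^1$-ball $B$ of radius $n:=length(v)$ around $o$ with $\pre{v}{\sigma(g)}=\tau$; since $v^\circ\in B$ we have $g(v^\circ)=v^\circ$, and since $o\in B$ we have $g\in G_o$, so by Theorem~\ref{thm:characterization.of.cubical.portraits} the family $(\pre{w}{\sigma(g)})_w$ is a cubical portrait on $T_o^{edge}$. Applying condition $comm$ at the vertex $v$ shows that $\tau=\pre{v}{\sigma(g)}$ preserves commutation. To see that $\tau$ fixes $\langle\reduc(v)\rangle$ pointwise, first take $s\in\reduc(v)$: by the description in Notation~\ref{not:red}, the edge $e$ at $v^\circ$ labelled $s$ crosses a hyperplane separating $o$ from $v^\circ$, so its other endpoint $(vs)^\circ$ satisfies $d_{\ell^1}(o,(vs)^\circ)=n-1$; both endpoints of $e$ lie in $B$, hence $g$ fixes $e$ and $\tau(s)=\pre{v}{\sigma(g)}(s)=s$. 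Next take $t\in\Sigma_v^{edge}$ with $[s,t]=1$ for some $s\in\reduc(v)$; then $t\in\Sigma_{vs}^{edge}$, and the edge $e'$ at $(vs)^\circ$ labelled $t$ has its other endpoint $(vst)^\circ$ at $\ell^1$-distance at most $(n-1)+1=n$ from $o$, so both endpoints of $e'$ lie in $B$, whence $g$ fixes $e'$ and $\pre{vs}{\sigma(g)}(t)=t$. Condition $par$ of the cubical portrait then yields $\tau(t)=\pre{v}{\sigma(g)}(t)=\pre{vs}{\sigma(g)}(t)=t$. Since $\langle\reduc(v)\rangle$ is exactly $\reduc(v)$ together with all such $t$, this completes the argument.

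I expect the only mildly delicate step to be the second half of the $\langle\reduc(v)\rangle$ computation: a label $t$ commuting with some $s\in\reduc(v)$ need not label an edge at $v^\circ$ pointing back towards $o$, so one cannot directly argue that $g$ fixes both endpoints of a single edge at $v^\circ$; the trick is to pass first to the vertex $(vs)^\circ$, which lies one step closer to $o$, so that appending the $t$-edge still keeps us inside $B$, and then to transport the resulting triviality of $\pre{vs}{\sigma(g)}(t)$ back to $v$ using condition $par$. The other point requiring a short geometric argument is that Type~$3$ vertices are strictly farther than $length(v)$ from $o$; everything else is bookkeeping with the definitions of $\reduc(v)$, $\langle\reduc(v)\rangle$, and the cubical-portrait conditions $comm$ and $par$.
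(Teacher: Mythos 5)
Your proof is correct, and while the first implication follows the paper's argument essentially verbatim (no vertex of Type 3 relative to $(o,v)$ lies within $\ell^1$-distance $length(v)$ of $o$, so $A_{v,\tau}$ fixes the ball $B_{length(v)}(o)$ and witnesses appearance; commutation-preservation comes from condition $comm$ of the cubical portrait of $g$ via Theorem \ref{thm:characterization.of.cubical.portraits}), your treatment of the second implication is genuinely more careful than the paper's. The paper concludes that $\tau$ fixes $\langle\reduc(v)\rangle$ by asserting that $g$ ``has to fix all vertices of Type 1 or 2 relative to $(o,v)$'' and hence fixes $(vs)^{\circ}$ for all $s\in\langle\reduc(v)\rangle$; but fixing all Type 1 and 2 vertices is the defining property of $A_{v,\tau}$, not of a general witness $g$ of appearance, and for $t\in\langle\reduc(v)\rangle\setminus\reduc(v)$ the vertex $(vt)^{\circ}$ lies at distance $length(v)+1$ from $o$, outside the ball that $g$ is assumed to fix. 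Your detour --- observing that both endpoints of the $t$-labelled edge at $(vs)^{\circ}$ (with $s\in\reduc(v)$) lie in $B_{length(v)}(o)$, so $\pre{vs}{\sigma(g)}(t)=t$, and then transporting this to $v$ via condition $par$ --- is exactly the argument needed to establish the lemma under the weaker, ball-fixing hypothesis of Definition \ref{def:permutation.appearing.at.v}, which is the whole point of this ``slightly stronger version''; the paper's shortcut buys brevity but, as written, is only literally justified when the witness is an $A_{v,\tau}$. The only implicit ingredient you use beyond the paper's toolkit is that two vertices of a $\CAT$ cube complex are joined by at most one edge (so fixing both endpoints fixes the oriented edge and hence its label), which is standard and is used in the same way in the proof of Theorem \ref{thm:choice.of.A.generates.stabilizer}.
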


\begin{proof}
	By the definition of Types, any $w \in \mathcal{L}_o^{red}$ with $length(w) \leq length(v)$ is either of Type 1 relative to $(o,v)$, or $v^{\circ} = w^{\circ}$ and $w$ is of Type 2. This implies that, whenever $A_{v, \tau}$ exists, it fixes an $\ell^1$-ball of radius $r = length(v)$ centered at $o$. Thus, $\tau$ appears at $v$ in $G_o$ whenever $A_{v, \tau}$ exists.

	Let $g$ be a cubical isometry that fixes a ball of radius $length(v)$ centered at $o$ such that $\pre{v}{\sigma(g)} = \tau$. Since $g$ is a cubical isometry, all its local actions preserve commutation by Theorem \ref{thm:characterization.of.cubical.portraits}. By definition, for every $s \in \langle \reduc(v) \rangle$, we have that $vs$ is of Type 1 or 2 relative to $(o,v)$. Since $g$ has to fix all vertices of Type 1 or 2 relative to $(o,v)$, we see that $g$ fixes $v^{\circ}$ and $vs^{\circ}$ for all $s \in \langle \reduc(v) \rangle$. Thus $\tau = \pre{v}{\sigma(g)}$ has to fix $\langle \reduc(v) \rangle$ pointwise.
\end{proof}

For every $v \in \mathcal{L}_o^{red}$ and every bijection $\tau : \Sigma^{edge}_v \rightarrow \Sigma^{edge}_v$ that preserves commutation and fixes $\langle \reduc(v) \rangle$ pointwise, choose one element $A_{v, \tau}$, provided it exists, and denote the collection of these choices by $\mathcal{A}$. (Note that we also choose $A_{\epsilon, \tau}$ for all $\tau$ for which there exists one.) For the remainder of this section, $A_{v, \tau}$ denotes this specific choice.

The following is a slightly stronger Version of {\cite[Theorem B]{HartnickMedici23b}}. (The difference lies in Definition \ref{def:permutation.appearing.at.v}.)

\begin{theorem}
\label{thm:choice.of.A.generates.stabilizer}
	Suppose that for every $v \in \mathcal{L}_o^{red}$ and every $\tau$ that appears at $v$ in $G_o$, there exists a cubical isometry $A_{v, \tau}$. Then the collection $\mathcal{A}$ generates a dense subgroup of $G_o$.
\end{theorem}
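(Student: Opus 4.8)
The plan is to show that any cubical isometry $g \in G_o$ can be approximated arbitrarily well (in the compact-open topology) by elements of the subgroup generated by $\mathcal{A}$. Concretely, for each radius $r$ I would produce a finite product of elements of $\mathcal{A}$ that agrees with $g$ on the $\ell^1$-ball of radius $r$ about $o$; since the topology on $G_o$ is the compact-open topology coming from the $\CAT$-metric, and agreement on an $\ell^1$-ball of large radius forces agreement on a large $\CAT$-ball (the two metrics are bi-Lipschitz on the $1$-skeleton and cubical isometries are determined by their action on the $1$-skeleton), this gives density. So the whole argument is an induction on $r$: assuming we can match $g$ on the ball of radius $r-1$, I want to correct the discrepancy on the sphere of radius $r$ using generators of the form $A_{v,\tau}$ with $\mathrm{length}(v) = r-1$, together with possibly some generators indexed by shorter words to handle vertices that are not reduced-distance $r$ from $o$.

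In more detail: replacing $g$ by $h^{-1}g$ where $h$ is the product built so far, I may assume $g$ fixes the ball of radius $r-1$ pointwise. I then want to write $g$, up to something fixing the ball of radius $r$, as a product $\prod_{v} A_{v,\tau_v}$ over reduced words $v$ of length $r-1$ (one factor per endpoint $v^\circ$ at $\ell^1$-distance $r-1$), where $\tau_v = \pre{v}{\sigma(g)}$ is the local action of $g$ at $v$. The point is that each such $\tau_v$ is a bijection of $\Sigma_v^{edge}$ that preserves commutation (Theorem \ref{thm:characterization.of.cubical.portraits}) and, because $g$ fixes everything of Type 1 and 2 relative to $(o,v)$ (these are exactly the vertices at $\ell^1$-distance $\le r-1$, plus $v^\circ$ itself), it fixes $\langle \reduc(v) \rangle$ pointwise, so $\tau_v$ lies in the class of permutations for which, by hypothesis, a generator $A_{v,\tau_v} \in \mathcal{A}$ exists. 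One then checks that the local actions of the $A_{v,\tau_v}$ at the various length-$(r-1)$ words do not interfere with each other (the supports, in the appropriate sense, are disjoint because distinct endpoints give distinct vertices and each $A_{v,\tau}$ fixes all Type 1 and 2 vertices relative to its own $(o,v)$), so the product $\prod_v A_{v,\tau_v}$ has the same local action as $g$ at every length-$(r-1)$ reduced word, hence agrees with $g$ on the ball of radius $r$. Here I would lean on Theorem \ref{thm:canonical.portrait.extension} and Lemma \ref{lem:cubical.implies.reduction.preserving} to phrase everything in terms of the portrait on $T_o^{red}$: two cubical isometries that agree on $T_o^{red}$ up to depth $r$ agree on the ball of radius $r$.

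The main obstacle I anticipate is the bookkeeping around vertices that are at $\CAT$- or combinatorial distance $r$ from $o$ but at $\ell^1$-distance less than $r$, i.e. vertices reached only by reducible edge-paths, together with the commutation between edges at a fixed vertex $v^\circ$. The definitions of $\reduc(v)$ and $\langle \reduc(v)\rangle$ are precisely designed to handle this: a generator $A_{v,\tau}$ fixes all Type 1 and 2 vertices, which is what guarantees that multiplying by it does not destroy the agreement already achieved on smaller balls, and the constraint that $\tau$ fix $\langle \reduc(v)\rangle$ is exactly what makes $A_{v,\tau}$ well-behaved with respect to the hyperplanes already crossed. I would need to verify carefully that after correcting at all length-$(r-1)$ vertices the resulting isometry genuinely fixes the whole ball of radius $r$ and not just the sphere of reduced words of length $r$ — this is where Lemma \ref{lem:innermost.cancellation.reduction} and condition $end$ of a cubical portrait enter, ensuring consistency at vertices reachable by several edge-paths. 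The remaining steps (translating "agrees on a large $\ell^1$-ball" into "close in the compact-open topology", and concluding density) are routine.
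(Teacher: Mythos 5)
Your plan is the paper's proof in all essentials: induct on the radius $n$, arrange that the current error term $g_n$ fixes $B_n(o)$ pointwise, read off $\tau_i=\pre{v_i}{\sigma(g_n)}$ at one reduced word $v_i$ of length $n$ per vertex of the sphere, observe that each $\tau_i$ appears at $v_i$ in $G_o$ (with $g_n$ itself as the witness required by Definition \ref{def:permutation.appearing.at.v}, which is all the hypothesis needs), correct by the corresponding $A_{v_i,\tau_i}$, and let the partial products converge to $g$ in the compact-open topology.

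One step of your justification would fail as stated, and it is exactly the step the paper spends most of its proof on. The claim that the corrections do not interfere ``because the supports are disjoint: distinct endpoints give distinct vertices'' is not an argument: a vertex $p$ at distance $n+1$ from $o$ is in general adjacent to several vertices at distance $n$, so several of the $A_{v_i,\tau_i}$ could a priori move $p$, and nothing in the definition of $A_{v,\tau}$ rules this out. The paper's Step 1 resolves this not by disjointness but by triviality: if $v_is_i^\circ=p=v_js_j^\circ$ with $i\neq j$, then $s_i^{-1},s_j^{-1}\in\reduc(v_is_i)$, hence $[s_i,s_j]=1$; choosing a reduced word $v_0$ of length $n-1$ with $v_0s_j^\circ=v_i^\circ$, the induction hypothesis gives $\pre{v_0}{\sigma(g_n)}=\id$, and conditions $par$ and $end$ yield $\tau_i(s_i)=\pre{v_0}{\sigma(g_n)}(s_i)=s_i$, so every generator fixes such a $p$; only vertices reachable from a unique $(v_i,s_i)$ are moved, and by exactly one factor (the paper's Steps 2--3 then finish the computation of $g_{n+1}$ on $B_{n+1}(o)$). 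You do flag the multi-path consistency issue at the end with the right tools ($end$, commutation/innermost cancellations), but the argument itself is missing, and it is the heart of the proof. A second, smaller slip: the Type 1 and 2 vertices relative to $(o,v)$ are not ``exactly'' the ball of radius $\mathrm{length}(v)$ together with $v^\circ$; only the containment of that ball in the set of Type 1 and 2 vertices holds (and only that direction is used). In particular your intermediate claim that $g$ fixes all Type 1 and 2 vertices is both unjustified and unnecessary: to invoke the hypothesis you only need that $\tau_v$ appears at $v$, which is immediate from the definition, and the fact that such a $\tau_v$ preserves commutation and fixes $\langle\reduc(v)\rangle$ pointwise is Lemma \ref{lem:properties.of.tau.that.appears.at.v}. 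Likewise, no generators indexed by shorter words are needed in the correction step.
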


The assumption made in this theorem is satisfied for example when $\Lambda$ is a right-angled Artin group and $X$ is its Salvetti-complex (see {\cite[Theorem 5.31]{HartnickMedici23b}}).

\begin{proof}	
	Let $g \in G_o$. The strategy of the proof is to consider the closed $\ell^1$-balls $B_n(o)$ of radius $n$ centered at $o$ and to do induction over $n$. Specifically, if $g$ fixes $B_n(o)$ pointwise, we show that we can write it as a product of finitely many $A_{v, \tau} \in \mathcal{A}$, together with an element $g'$ that fixes $B_{n+1}(o)$ pointwise. By induction, this will imply that every $g \in G_o$ can be written as the limit of a sequence of elements in $\langle \mathcal{A} \rangle$.
	
	Suppose $g_{n} \in G_o$ fixes the ball $B_n(o)$ for some $n \geq 0$. For every vertex $p$ at distance $n$ from $o$, choose a reduced word $v \in \mathcal{L}_o^{red}$ such that $v^{\circ} = p$. Since $X$ is locally finite, this yields a finite collection of words $v_1, \dots, v_N$. For every $v_i$, define $\tau_i := \pre{v_i}{\sigma(g_n)}$. Since $g_n$ fixes $B_n(o)$ pointwise, $\tau_i$ appears at $v_i$ in $G_o$ for every $i$. By assumption, this implies that there exists a cubical isometry $A_{v_i, \tau_i} \in \mathcal{A}$. Define a cubical isometry
	\[ g_{n+1} := \left( \prod_{i=1}^N A_{v_i, \tau_i}^{-1}  \right) \circ g_n. \]
	
	As we will see, the restriction of $g_{n+1}$ to $B_{n+1}(o)$ does not depend on the ordering in the product, which is why we may choose any order. We will show that $g_{n+1}$ fixes $B_{n+1}(o)$ pointwise. Since every reduced word of length $\leq n$ is of Type 1 or 2 with respect to $(o,v_i)$ for every $i$, it is clear that the $A_{v_i, \tau_i}$ fix $B_n(o)$ pointwise. Therefore, $g_{n+1}$ fixes $B_n(o)$ pointwise. For the vertices at distance $n+1$ from $o$, we have to work more carefully. We require three steps.
	
	\subsubsection*{Step 1: Every vertex at distance $n+1$ from $o$ gets moved by at most one $A_{v_i, \tau_i}$.}
	
	Let $p$ be a vertex at distance $n+1$ from $o$. Let $j$ be an index such that there is no $t \in \Sigma^{edge}_{v_j}$ that satisfies $v_j t^{\circ} = p$. We start by showing that $p$ is fixed by $A_{v_j, \tau_j}$. Let $w' \in \mathcal{L}_{v_j}^{edge}$ be a reduced word such that $v_j w'^{\circ} = p$. We claim that the word $v_j w'$ is reducible. Indeed, if it was reduced, then its length would be equal to the distance from $o$ to $p$, which is $n+1$. However, if $v_j w'$ has length $n+1$, then $w'$ has length $1$ and the pair $(v_j, w')$ would be a pair $(v_j, t)$ such that $v_j t^{\circ}Ê= p$. But $j$ is an index for which this does not exist, a contradiction. Thus $v_j w'$ is reducible, which implies that $p$ is of Type 1 with respect to $(o,v_j)$. Therefore $A_{v_j, \tau_j}$ fixes $p$ whenever there exists no $t \in \Sigma^{edge}_{v_j}$ such that $v_j t^{\circ} = p$.
	
	Now let $i$ be an index such that there exists $s \in \Sigma^{edge}_{v_i}$ such that $v_i s^{\circ} = p$. If there exists only one such pair, then $A_{v_j, \tau_j}$ fixes $p$ for every $j \neq i$ by the argument above and we are done. Suppose thus that there are at least two pairs $(v_i, s_i)$, $(v_j, s_j)$ such that $v_i s_i^{\circ} = v_j s_j^{\circ}$. This implies that $s_i^{-1}, s_j^{-1} \in \reduc(v_i s_i)$ and thus $[s_i, s_j] = [s_i^{-1}, s_j^{-1}] = 1$. Let $v_0$ be a reduced word such that $v_0^{\circ} = \left( v_i s_i s_i^{-1} s_j^{-1} \right)^{\circ}$. This word is of length $n-1$, as it is reduced, and it satisfies $v_0 s_i s_j^{\circ} = p = v_i s_i^{\circ}$. By induction assumption, we know that $g_n$ fixes all edges incident to $v_0^{\circ}$, since these edges are all contained in $B_n(o)$. Therefore, $\pre{v_0}{\sigma(g_n)} = \id$. Using $par$ and $end$, we compute
	\begin{equation*}
		\begin{split}
			\tau_i(s_i) & = \pre{v_i}{\sigma(g_n)}(s_i)\\
			& = \pre{v_0 s_j}{\sigma(g_n)}(s_i)\\
			& = \pre{v_0}{\sigma(g_n)}(s_i)\\
			& = s_i. 
		\end{split}
	\end{equation*}
	By definition, $A_{v_i, \tau_i}(v_i s_i) = v_i \tau_i(s_i)$. The computation above thus shows that $A_{v_i, \tau_i}(v_i s_i) = v_i s_i$ and thus $A_{v_i, \tau_i}$ fixes the vertex $v_i s_i^{\circ} = p$. By an analogous argument, the same holds for $A_{v_j, \tau_j}$. We conclude that, whenever there are at least two pairs $(v_i, s_i)$, $(v_j, s_j)$ such that $v_i s_i^{\circ} = p = v_j s_j^{\circ}$, then all $A_{v_i, \tau_i}$ fix the vertex $p$. This proves Step 1.
	
	\subsubsection*{Step 2: Let $i \in \{ 1, \dots, N \}$ and let $s \in \Sigma^{edge}_{v_i}$. If $A_{v_i, \tau_i}(v_i s) \neq vs$, then 	for every $j \neq i$, $A_{v_j, \tau_j}$ fixes $A_{v_i, \tau_i}(v_i s)$.}
	
	Suppose $A_{v_i, \tau_i}(v_i s) \neq v_i s$. Since $A_{v_i, \tau_i}$ is bijective, it cannot send $v_i s$ and $A_{v_i, \tau_i}(v_i s)$ to the same word. Therefore, it cannot fix $A_{v_i, \tau_i}(v_i s)$. By Step 1, this implies that for every $j \neq i$, we have that $A_{v_j, \tau_j}$ fixes $A_{v_i, \tau_i}(v_i s) = v_i \tau_i(s)$.
	
	\subsubsection*{Step 3: Compute $g_{n+1}$ for reduced words of length $n+1$.}
	
	Let $w \in \mathcal{L}_o^{red}$ be a reduced word of length $n+1$. There exists $i \in \{ 1, \dots, N \}$ and $s \in \Sigma_{v_i}$ such that $w^{\circ} = v_i s^{\circ}$. If the pair $(v_i, s)$ is not unique, we conclude from Step 1 that $\tau_i(s) = s$ and thus we compute
	\[ g_{n}(v_i s) = v_i \tau_i(s) = v_i s, \]
	\[ A_{v_j, \tau_j}( v_i s ) = v_i s \]
	for every $j \in \{ 1, \dots, N \}$. We conclude that
	\[ g_{n+1}(v_i s) = \prod_{i=1}^N A_{v_j, \tau_j}^{-1} \circ g_n (v_i s) = v_i s \]
	and thus $g_{n+1}$ fixes $v_i s^{\circ}$.
	
	If the pair $(v_i, s)$ is unique, then $A_{v_j, \tau_j}$ fixes $v_i s$ and $A_{v_i, \tau_i}(v_i s)$ for every $j \neq i$. In particular $A_{v_j, \tau_i}^{-1}$ fixes these two words as well. Using that $g_n(v_i s) = v_i \tau_i(s) = A_{v_i, \tau_i}(v_i s)$, we compute that
	\begin{equation*}
		\begin{split}
			g_{n+1}(v_i s) & = \prod_{j=1}^N A_{v_j, \tau_j}^{-1} \circ g_n(v_i s)\\
			& = \prod_{j=1}^N A_{v_j, \tau_j}^{-1} (v_i \tau_i(s))\\
			& = A_{v_i, \tau_i}^{-1}( v_i \tau_i(s) )\\
			& = v_i s.
		\end{split}
	\end{equation*}
	We conclude that $g_{n+1}$ fixes $v_i s^{\circ}$ in this case as well. This implies that $g_{n+1}$ fixes the closed $\ell^1$-ball of radius $n+1$ around $o$. This finishes Step 3.\\
	
	Using induction over $n$, we see that for any $n \geq 0$ and any $g \in G_o$, we find an element $g_n \in G_o$ such that $g_n = \prod_{i=1}^{M_n} A_{v_i, \tau_i}^{-1} \circ g$ and $g_n$ fixes the $\ell^1$-ball of radius $n$ around $o$. We conclude that the sequence $(g_n)_n$ converges to $\Id_X$ in compact-open topology and thus the sequence of products $\left( \prod_{i=M_n}^{1} A_{v_i, \tau_i} \right)_n$ converges to $g$ in compact-open topology. Therefore, the group generated by the set $\mathcal{A} = \{ A_{v, \tau} \}$ is dense in $G_o$.
		
\end{proof}

We can use this topologically generating set of $G_o$ to produce a topologically generating set of $\Aut(X)$. To do so, we need some preliminary results. Let $g \in \Lambda$ and $u \in \mathcal{L}^{edge}_o$ a word such that $u^{\circ} = g(o)$. We define a map
\[ L_u : \mathcal{L}^{edge}_u \rightarrow \Sigma^* \]
\[ v \mapsto uv. \]

\begin{lemma} \label{lem:Lambda.as.left.multiplication}
	The map $L_u$ defines an automorphism on $X$. This automorphism coincides with $g$.	
\end{lemma}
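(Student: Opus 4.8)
The entire statement will follow from the single identity $(uv)^{\circ} = g(v^{\circ})$ for all $v \in \mathcal{L}_u^{edge}$, so the plan is to establish this and then read off both conclusions directly.

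First I would check that $L_u$ is genuinely a self-map of the vertex set of $T_o^{edge}$. Since $g \in \Lambda$ and $\ell$ is $\Lambda$-invariant, $g$ carries the edge-path from $o$ spelling a word $w$ to an edge-path from $g(o)$ spelling the \emph{same} word $w$; running this argument for $g$ and for $g^{-1}$ gives $\mathcal{L}_o^{edge} = \mathcal{L}_{g(o)}^{edge} = \mathcal{L}_{u^{\circ}}^{edge} = \mathcal{L}_u^{edge}$. By the definition of $\mathcal{L}_u^{edge}$, the condition $v \in \mathcal{L}_u^{edge}$ is equivalent to $uv \in \mathcal{L}_o^{edge}$, so $L_u$ does map $\mathcal{L}_o^{edge}$ into itself.

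For the identity, fix $v \in \mathcal{L}_u^{edge}$ and consider the edge-path in $X$ that starts at $o$ and spells $uv$. By property (1) of a cubical edge-labeling an edge-path is determined by its starting vertex together with its word, so this path is the concatenation of the edge-path from $o$ spelling $u$ (which ends at $u^{\circ} = g(o)$) with the edge-path from $g(o)$ spelling $v$. By the observation of the previous paragraph, the latter path is exactly the $g$-image of the edge-path from $o$ spelling $v$, so its endpoint is $g(v^{\circ})$; hence $(uv)^{\circ} = g(v^{\circ})$. In particular this also shows that $L_u$ does not depend on the choice of $u$ with $u^{\circ} = g(o)$.

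This identity says precisely that $L_u$ is compatible with the canonical map $T_o^{edge} \to X^{(1)}$, $w \mapsto w^{\circ}$: it descends to the self-map $v^{\circ} \mapsto (uv)^{\circ} = g(v^{\circ})$ of $X^{(0)}$, which is nothing but the restriction of the cubical automorphism $g$ to the $1$-skeleton. Since a cubical isometry is determined by its action on $X^{(1)}$, this simultaneously proves that $L_u$ defines a cubical automorphism of $X$ and that this automorphism coincides with $g$. I do not expect a genuine obstacle here: the only point that needs care is the bookkeeping between the three identifications in play (words, edge-paths, and vertices), and within it the step, governed by $\Lambda$-invariance of $\ell$, that $g$ sends the edge-path from $o$ spelling $v$ to the edge-path from $g(o)$ spelling the same word $v$.
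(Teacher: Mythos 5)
Your proof is correct and follows essentially the same route as the paper: both reduce everything to the identity $(uv)^{\circ} = g(v^{\circ})$, proved by using $\Lambda$-invariance of $\ell$ to see that $g$ sends the edge-path from $o$ spelling $v$ to the edge-path from $g(o)$ spelling the same word, and then concatenating it with the path spelling $u$. The extra preliminary check that $L_u$ maps $\mathcal{L}_o^{edge}$ into itself is harmless bookkeeping that the paper obtains as a byproduct of the same concatenation argument.
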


\begin{proof}
	We show that, for every $v \in \mathcal{L}^{edge}_o$, $uv^{\circ} = g(v^{\circ})$. (In particular, $uv \in \mathcal{L}^{edge}_o$.) This proves both statements of the Lemma.
	
	By construction, $L_u(\epsilon) = u$ and by definition, $u^{\circ} = g(o)$. Let $\gamma_0$ be the edge-path that starts at $o$ and corresponds to $u$. Its endpoint is $g(o)$. Let $v \in \mathcal{L}^{edge}_o$ and let $\gamma$ be the corresponding edge-path in $X$ that starts at $o$. The endpoint of $\gamma$ is $v^{\circ}$. Since $g \in \Lambda$, it preserves the edge-labeling and $g(\gamma)$ is an edge-path that starts at $g(o)$ and spells the same word as $\gamma$. Thus, the edge-paths $\gamma_0$ and $g(\gamma)$ can be concatenated to a path from $o$ to $g(v^{\circ})$ that spells the word $uv$. We conclude that $uv \in \mathcal{L}^{edge}_o$ and that $uv^{\circ} = g(v^{\circ})$. This implies that $L_u$ induces a map on vertices that coincides with $g$, which is the desired statement.
\end{proof}

We denote the isomorphism induced by $L_u$ by $L_u$ as well. We call the $L_u$ `left-multiplication maps'. We now look at conjugations of $A_{v, \tau}$ by elements in $\Lambda$. By the lemma above, we can treat this as conjugation by left-multiplication maps.

\begin{lemma} \label{lem:conjugations.of.Avt}
	Let $v, v' \in \mathcal{L}^{red}_o$ such that $\reduc(v) = \reduc(v')$ and there exists $g \in \Lambda$ such that $g(v^{\circ}) = v'^{\circ}$. Let $\tau : \Sigma^{edge}_v \rightarrow \Sigma^{edge}_v$ be a bijection that appears at $v$ in $G_o$. Then $\tau$ appears at $v'$ in $G_o$ and $g \cdot A_{v, \tau} \cdot g^{-1}$ is an $A_{v', \tau}$.
\end{lemma}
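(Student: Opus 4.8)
The plan is to set $h := g \circ A_{v,\tau} \circ g^{-1}$ (a cubical isometry, being a composition of elements of $\Aut(X)$) and verify directly that $h$ meets the two defining requirements of an $A_{v',\tau}$ in Notation \ref{not:Avt}: that $h$ fixes every vertex of Type 1 and Type 2 relative to $(o,v')$, and that $\pre{v'}{\sigma(h)} = \tau$. Granting this, $A_{v',\tau}$ exists, so $\tau$ appears at $v'$ in $G_o$ by the first assertion of Lemma \ref{lem:properties.of.tau.that.appears.at.v}; and $h \in G_o$ because $o$ is of Type 1 relative to $(o,v')$ and hence fixed by $h$. First I would dispose of the degenerate case $v = \epsilon$: then $\reduc(v') = \reduc(\epsilon) = \emptyset$, so $v' = \epsilon$ as well (the empty word being the only one with empty $\reduc$), and $g(o) = o$ forces $g = \Id_X$ since $\Lambda$ acts freely, so the claim is immediate. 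From now on $v \neq \epsilon$, hence $\reduc(v) \neq \emptyset$ (if $v$ ends with the letter $s$, then $s^{-1} \in \reduc(v)$), and therefore $v' \neq \epsilon$ as well.

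The geometric core is the transfer of Type data along $g$. Let $\hat h_1, \dots, \hat h_m$ be the hyperplanes incident to $v^\circ$ separating $o$ from $v^\circ$, and let $h_i$ be the halfspace bounded by $\hat h_i$ containing $v^\circ$. Under the bijection between hyperplanes incident to $v^\circ$ and $\Sigma_v^{edge}$ (each such hyperplane having a unique dual edge at $v^\circ$), the $\hat h_i$ correspond exactly to $\reduc(v)$. Since $g \in \Lambda$ preserves the edge-labeling and $g(v^\circ) = v'^\circ$, the images $g(\hat h_i)$ are precisely the hyperplanes incident to $v'^\circ$ whose dual labels at $v'^\circ$ lie in $\reduc(v)$; by the hypothesis $\reduc(v) = \reduc(v')$ these are exactly the hyperplanes incident to $v'^\circ$ that separate $o$ from $v'^\circ$, and $g(h_i)$ is the halfspace bounded by $g(\hat h_i)$ containing $v'^\circ$. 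Because $g$ sends halfspaces to halfspaces and carriers to carriers, the geometric description of Types in Remark \ref{rem:equivalent.definition.of.vertex.types} then gives: a vertex is of Type $k$ relative to $(o,v)$ if and only if its $g$-image is of Type $k$ relative to $(o,v')$, for each $k \in \{1,2,3\}$; by symmetry of the hypotheses the same holds with $g$ replaced by $g^{-1}$ and the roles of $v$ and $v'$ swapped.

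With this in hand the two verifications are short. If $p$ is a vertex of Type 1 or 2 relative to $(o,v')$, then $g^{-1}(p)$ is of Type 1 or 2 relative to $(o,v)$, hence fixed by $A_{v,\tau}$, so $h(p) = g\bigl(A_{v,\tau}(g^{-1}(p))\bigr) = g(g^{-1}(p)) = p$. For the local action, $g^{-1}(v'^\circ) = v^\circ$ is of Type 2 relative to $(o,v)$, hence fixed by $A_{v,\tau}$, so $h(v'^\circ) = v'^\circ$; and for $s \in \Sigma_{v'}^{edge} = \Sigma_v^{edge}$ I would chase the outgoing edge at $v'^\circ$ labeled $s$: it is sent by $g^{-1}$ to the edge at $v^\circ$ labeled $s$, then by $A_{v,\tau}$ to the edge at $v^\circ$ labeled $\pre{v}{\sigma(A_{v,\tau})}(s) = \tau(s)$, then by $g$ to the edge at $v'^\circ$ labeled $\tau(s)$; hence $\pre{v'}{\sigma(h)}(s) = \tau(s)$, as required.

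The hard part will be the bookkeeping in the transfer step: $g$ by itself only relates Types relative to $(g(o),v')$ to Types relative to $(o,v)$, and it is the hypothesis $\reduc(v) = \reduc(v')$ that forces the hyperplane set defining Types relative to $(o,v')$ to coincide with the one defining them relative to $(g(o),v')$ — without this hypothesis the statement is false. Everything else is a routine unwinding of the definitions of $A_{v,\tau}$, of the local actions $\pre{v}{\sigma(\cdot)}$, and of the $\Lambda$-invariance of the cubical edge-labeling.
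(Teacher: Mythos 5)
Your proposal is correct and follows the same skeleton as the paper's proof (verify that the conjugate fixes all Type~1 and Type~2 vertices relative to $(o,v')$ and has local action $\tau$ at $v'$, then deduce appearance via Lemma \ref{lem:properties.of.tau.that.appears.at.v}), but it implements the key transfer step differently. The paper writes $g$ as a left-multiplication map $L_u$ (Lemma \ref{lem:Lambda.as.left.multiplication}) and shows, purely in the word calculus of Definition \ref{def:vertex.types}, that $w$ and $uw$ have the same Type relative to $(o,v)$ and $(o,v')$ respectively, then computes $\pre{v'}{\sigma(g A_{v,\tau} g^{-1})} = \tau$ by a word manipulation using condition $end$ to identify $\pre{u^{-1}v'}{\sigma(A_{v,\tau})}$ with $\tau$. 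You instead use the geometric characterisation of Remark \ref{rem:equivalent.definition.of.vertex.types}: identifying the Type-defining hyperplanes at $v^{\circ}$ with $\reduc(v)$ via dual labels, you show $g$ carries them, with their halfspaces and carriers, onto the Type-defining hyperplanes for $(o,v')$ precisely because $\reduc(v) = \reduc(v')$ (correctly flagged as the crux), and you read off the local action by chasing labelled edges using $\Lambda$-invariance. Your route avoids the $L_u$ formalism at the price of leaning on the geometric equivalence stated without proof in Remark \ref{rem:equivalent.definition.of.vertex.types}, whereas the paper stays inside its word machinery; your explicit treatment of the degenerate case $v = \epsilon$, where Types are undefined and $A_{\epsilon,\tau}$ has a different defining condition, handles a point the paper's proof passes over silently.
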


\begin{proof}
	If we can show that $g \cdot A_{v, \tau} \cdot g^{-1}$ is an $A_{v', \tau}$, then $\tau$ appears at $v'$ in $G_o$.  Let $u \in \mathcal{L}^{red}_o$ be such that $g = L_u$ and denote $A := g \cdot A_{v, \tau} \cdot g^{-1}$. To show that $A$ is an $A_{v', \tau}$, we start by showing that it fixes all words of Type 1 and 2 relative to $(o,v')$.
	
	\subsubsection*{Step 1: Show that for any $w \in \mathcal{L}^{edge}_o$ that $w$ is of the same Type relative to $(o,v)$ as $uw$ relative to $(o,v')$.}
	
	Let $w'$ be a reduced word such that $vw'^{\circ}Ê= w^{\circ}$. Then $uw^{\circ} = uvw'^{\circ} = v'w'^{\circ}$. Since $\reduc(v) = \reduc(v')$, the word $vw'$ is reduced if and only if $v'w'$ is reduced. We immediately conclude that $w$ is of Type 1 relative to $(o,v)$ if and only if $uw$ is of Type 1 relative to $(o,v')$.
	
	Suppose now that $vw'$, and thus $v'w'$, is reduced. The word $w$ is of Type 2 relative to $(o,v)$ if and only if there exists $s \in \reduc(v)$ such that $[s,w'] = 1$. Since $\reduc(v) = \reduc(v')$, this happens if and only if there exists $s \in \reduc(v')$ such that $[s,w'] = 1$. We conclude that $w$ is of Type 2 relative to $(o,v)$ if and only if $uw$ is of Type 2 relative to $(o,v')$. Equivalence for Type 3 now follows since all other cases have been dealt with.
	
	\subsubsection*{Step 2: Show that $L_u \cdot A_{v, \tau} \cdot L_u^{-1}$ fixes all words of Type 1 and 2 relative to $(o,v')$.} Let $w$ be of Type 1 or 2 relative to $(o,v')$. Applying Step 1 with the word $u^{-1}$, we conclude that $u^{-1} w$ is of Type 1 or 2 relative to $(o,v)$. Therefore, $A_{v, \tau}$ fixes $u^{-1} w$ and we conclude that
	\[ L_u \cdot A_{v, \tau} \cdot L_u^{-1} (w)^{\circ} = L_u \cdot A_{v, \tau} (u^{-1} w)^{\circ} = L_u( u^{-1} w)^{\circ} = u u^{-1} w^{\circ} = w^{\circ}. \]
	Thus $L_u \cdot A_{v, \tau} \cdot L_u^{-1}$ fixes all vertices of Type 1 and 2 relative to $(o,v')$. (Recall that the Type of a word depends only on its endpoint.) This finishes Step 2.\\
	
	We are left to show that $\pre{ v' }{\sigma(A)} = \tau$. For any $s \in \Sigma^{edge}_{v'}$, we compute
	\begin{equation*}
		\begin{split}
			A(v's) & = L_u \cdot A_{v, \tau} ( u^{-1} v' s)\\
			& = L_u \left( A_{v,\tau}(u^{-1} v') \pre{u^{-1}v'}{ \sigma(A_{v, \tau})}(s) \right)\\
			& = L_u \left( u^{-1} v' \tau(s) \right)\\
			& = u u^{-1} v' \tau(s).
		\end{split}
	\end{equation*}
	The induced cubical isometry thus sends the outgoing edge at $v'^{\circ}$ labeled by $s$ to the outgoing edge at $u u^{-1} v'^{\circ} = v'^{\circ}$ labeled by $\tau(s)$. We conclude that $\pre{ v' }{\sigma(A)} = \tau$. Therefore, $A$ satisfies the conditions of an $A_{v', \tau}$, which finishes the proof.
\end{proof}

We now build a topologically generating set of $\Aut(X)$. Let $\mathcal{A}$ denote the same set as in Theorem \ref{thm:choice.of.A.generates.stabilizer}. Let $S$ be a finite generating set of $\Lambda$ such that $S^{-1} = S$. (Since $\Lambda$ acts geometrically on $X$, it is finitely generated.) Choose a vertex $o \in X^{(0)}$. The orbit $\Aut(X) \cdot o$ can be written as a finite union of $\Lambda$-orbits. Let $g_0, \dots, g_n \in \Aut(X)$ such that $g_0 = \Id_X$ and
\[ \coprod_{i=0}^n \Lambda \cdot g_i(o) = \Aut(X) \cdot o. \]
Finally, consider the equivalence relation on $\mathcal{L}^{red}_o$ under which two words $v, w \in \mathcal{L}^{red}_o$ are equivalent if and only if $\reduc(v) = \reduc(w)$ and there exists $g \in \Lambda$ such that $g(v^{\circ}) = w^{\circ}$. Since $\Lambda$ acts cocompactly and the sets $\reduc(v)$ are subsets of the finite set $\Sigma$, there are only finitely many equivalence classes. For each equivalence class, choose a representative $v_i$. This yields a finite sequence $v_0, \dots, v_m \in \mathcal{L}^{red}_o$. (Without loss of generality, $v_0 = \epsilon$.) We now consider the set
\[ \mathcal{A}_0 := S \cup \{ g_0, \dots, g_n \} \cup \{ A_{v_i, \tau} \vert 0 \leq i \leq m, \tau \text{ appears at $v_i$ in $G_o$ } \}. \]
We emphasize that this set is finite.

\begin{theorem} \label{thm:fin.top.gen.set}
	Suppose that for every $0 \leq i \leq m$ and every $\tau$ that appears at $v_i$ in $G_o$, there exists a cubical isometry $A_{v_i, \tau}$. Then the subgroup generated by $\mathcal{A}_0$ is dense in $\Aut(X)$. In particular, $\Aut(X)$ is generated by $\mathcal{A}_0 \cup G_o$ and is thus compactly generated.
\end{theorem}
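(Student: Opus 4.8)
The plan is to show that $\mathcal{A}_0$ generates a dense subgroup of $\Aut(X)$ by reducing the problem, one orbit at a time, to the case of the stabilizer $G_o$, which is handled by Theorem \ref{thm:choice.of.A.generates.stabilizer}. Write $H := \langle \mathcal{A}_0 \rangle$. First I would observe that since $\Lambda$ and the $g_i$ both lie in $H$, for every element $h \in \Aut(X)$ there is some $\gamma \in \langle S \rangle = \Lambda$ and some $i$ with $\gamma g_i(o) = h(o)$, because the $\Lambda$-orbits of $g_0(o), \dots, g_n(o)$ exhaust $\Aut(X) \cdot o$. Hence $(\gamma g_i)^{-1} h \in G_o$, so $h \in H \cdot G_o$; this gives the second displayed assertion $\Aut(X) = \mathcal{A}_0 \cup G_o$ (as a group generated by that set), and compact generation follows because $G_o$ is compact (it is closed in $\Aut(X)$ and $\Aut(X)$ is tdlc, with $G_o$ an open compact subgroup by local finiteness of $X$) and $\mathcal{A}_0$ is finite.

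The substance is density. By the previous paragraph it suffices to show $\overline{H} \supseteq G_o$. By Theorem \ref{thm:choice.of.A.generates.stabilizer}, $G_o = \overline{\langle \mathcal{A} \rangle}$ where $\mathcal{A} = \{ A_{v, \tau} \}$ ranges over \emph{all} reduced words $v \in \mathcal{L}_o^{red}$ and all commutation-preserving $\tau$ fixing $\langle \reduc(v) \rangle$; so it is enough to show that every $A_{v, \tau} \in \mathcal{A}$ lies in $\overline{H}$. Given such a $v$, let $v_i$ be the representative of its equivalence class, so $\reduc(v) = \reduc(v_i)$ and there is $g \in \Lambda \subseteq H$ with $g(v_i^{\circ}) = v^{\circ}$. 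If $\tau$ appears at $v_i$ in $G_o$, then $A_{v_i, \tau} \in \mathcal{A}_0 \subseteq H$, and Lemma \ref{lem:conjugations.of.Avt} says $g \cdot A_{v_i, \tau} \cdot g^{-1}$ is an $A_{v, \tau}$, hence lies in $H$. The point requiring care is that $\tau$, which appears at $v$, also appears at $v_i$: I would run the argument of Lemma \ref{lem:conjugations.of.Avt} in the reverse direction (conjugating by $g^{-1}$, equivalently applying Step 1 of that lemma with the roles of $v$ and $v'$ swapped), using $\reduc(v) = \reduc(v_i)$ to conclude that the Type of a word relative to $(o, v)$ matches the Type of its $g^{-1}$-translate relative to $(o, v_i)$; since $\tau$ preserves commutation and fixes $\langle \reduc(v) \rangle = \langle \reduc(v_i) \rangle$, and some $A_{v, \tau}$ exists, conjugating it by $g^{-1}$ produces an $A_{v_i, \tau}$, so $\tau$ appears at $v_i$. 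Finally, since $\Aut(X)$ acts on the tree $T_o^{edge}$ and the chosen $A_{v, \tau}$ differs from the one produced by conjugation by an element fixing the ball $B_{length(v)}(o)$, any choice of representative $A_{v, \tau} \in \mathcal{A}$ is approximated arbitrarily well by elements of $H$ — here one uses that two cubical isometries agreeing on a large $\ell^1$-ball are close in the compact-open topology (Theorem \ref{thm:characterization.of.cubical.portraits} shows an $A_{v, \tau}$ is determined up to such a discrepancy). Thus $\mathcal{A} \subseteq \overline{H}$, so $G_o \subseteq \overline{H}$, and combined with $\Aut(X) = H \cdot G_o$ we get $\overline{H} = \Aut(X)$.

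The main obstacle I anticipate is the bookkeeping in the previous paragraph: $A_{v, \tau}$ is only defined up to an element fixing a large ball, so one must be careful that "$g A_{v_i, \tau} g^{-1}$ is an $A_{v, \tau}$" (in the sense of Notation \ref{not:Avt}) is enough to conclude that it is close to \emph{the} chosen representative $A_{v, \tau}$ in $\mathcal{A}$, rather than literally equal to it — and then feed this into the density statement of Theorem \ref{thm:choice.of.A.generates.stabilizer}, which only needs the $A_{v, \tau}$'s up to this ambiguity anyway. Concretely, I would restructure the induction of Theorem \ref{thm:choice.of.A.generates.stabilizer} to draw its factors from $H$ directly: at stage $n$ one needs, for each vertex at distance $n$, some cubical isometry fixing $B_n(o)$ with the prescribed local action $\tau_i$ at $v_i$; such an isometry is exactly an $A_{v_i, \tau_i}$ up to the ball-fixing ambiguity, and we just exhibited one inside $H$ (conjugate of an element of $\mathcal{A}_0$ by an element of $\Lambda \subseteq H$). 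Running the rest of that proof verbatim then shows every $g \in G_o$ is a limit of products of such elements of $H$, i.e. $G_o \subseteq \overline{H}$, which is all we need.
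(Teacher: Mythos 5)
Your proposal follows essentially the same route as the paper's proof: the orbit decomposition $\Aut(X)\cdot o=\coprod_{i}\Lambda\cdot g_i(o)$ reduces everything to the stabilizer $G_o$ (giving generation by $\mathcal{A}_0\cup G_o$ and compact generation), and then Lemma \ref{lem:Lambda.as.left.multiplication} and Lemma \ref{lem:conjugations.of.Avt} are used to replace each $A_{v,\tau}\in\mathcal{A}$ by a $\Lambda$-conjugate of some $A_{v_i,\tau}\in\mathcal{A}_0$ before feeding this into Theorem \ref{thm:choice.of.A.generates.stabilizer}. Your extra care about the non-uniqueness of the elements $A_{v,\tau}$ --- re-running the induction of Theorem \ref{thm:choice.of.A.generates.stabilizer} with factors that are merely \emph{some} $A_{v,\tau}$ lying in $\langle\mathcal{A}_0\rangle$, rather than relying on the (fixed-radius, hence insufficient) compact-open approximation remark --- is a sound way of handling a point the paper glosses over by writing $A_{v,\tau}=L_u\cdot A_{v_i,\tau}\cdot L_u^{-1}$ as an equality.
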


For the same reason as in Theorem \ref{thm:choice.of.A.generates.stabilizer}, the assumption of this theorem is satisfied if $\Lambda$ is a right-angled Artin group and $X$ its Salvetti-complex. Also note that we have reduced the ``infinitely large'' condition of Theorem \ref{thm:choice.of.A.generates.stabilizer} to a ``finite'' condition concerning finitely many $v_i$ and finitely many $\tau$. This opens up new possibilities to check this assumption.

\begin{proof}
	Let $g \in \Aut(X)$. Since $\coprod_{i=0}^n \Lambda \cdot g_i(o) = \Aut(X) \cdot o$, there exists $0 \leq i \leq n$ such that $g(o) \in \Lambda \cdot g_i(o)$. Thus, there exists $\lambda \in \Lambda$ such that $g_i^{-1} \cdot \lambda^{-1} \cdot g(o) = o$. We conclude that $S \cup \{ g_0, \dots, g_n \} \cup G_o$ generates $\Aut(X)$ which is thus compactly generated.
	
	By Theorem \ref{thm:choice.of.A.generates.stabilizer}, this implies that the union $S \cup \{ g_0, \dots, g_n \} \cup \mathcal{A}$ generates a dense subgroup of $\Aut(X)$. We are left to reduce from $\mathcal{A}$ to $\mathcal{A}_0$. Let $A_{v, \tau} \in \mathcal{A}$. Then there exists $0 \leq i \leq m$ such that $v_i$ is equivalent to $v$, that is, $\reduc(v_i) = \reduc(v)$ and there exists $\lambda \in \Lambda$ such that $\lambda(v_i^{\circ}) = v^{\circ}$. By Lemma \ref{lem:Lambda.as.left.multiplication}, there exists $u \in \mathcal{L}^{red}_o$ such that $L_u = \lambda$. By Lemma \ref{lem:conjugations.of.Avt}, we conclude that $A_{v, \tau} = L_u \cdot A_{v_i, \tau} \cdot L_u^{-1}$. Since $\Lambda$ is generated by $S$, we conclude that every element of $\mathcal{A}$ is contained in the group generated by $S \cup \{ g_0, \dots, g_n \} \cup \{ A_{v_i, \tau} \vert 0 \leq i \leq m, \tau \text{ appears at $v_i$ in $G_o$ } \}$. Therefore, $\mathcal{A}$ is contained in the group generated by $\mathcal{A}_0$, which completes the proof.
\end{proof}

\begin{remark}
	The collection $g_1, \dots g_n$ poses a challenge in producing an explicit description of this finite set. This can be circumvented if the action of $\Lambda$ on $X$ is vertex-transitive. In that case, $\Lambda \cdot o = X^{(0)}$ and thus the $\Aut(X)$-orbit has to coincide with the $\Lambda$-orbit, making the elements $g_1, \dots, g_n$ unnecessary. As was mentioned in Remark \ref{rem:Salvetti.complexes}, this is equivalent to requiring $\Lambda$ to be a right-angled Artin group with $X$ being its Salvetti-complex.
\end{remark}




\appendix

\section{Proof of Theorem \ref{thm:characterization.of.cubical.portraits} and Theorem \ref{thm:canonical.portrait.extension}} \label{sec:proofs}

The proof of Theorem \ref{thm:characterization.of.cubical.portraits} can be split into several parts. First, we show that, if $g : X \rightarrow Y$ is a cubical isometry, then the family $(\pre{v}{\sigma(g)})_{v \in \mathcal{L}_o^{edge}}$ defines a cubical portrait.

\begin{lemma} \label{lem:first.direction.of.main.theorem}
	Let $g : X \rightarrow Y$ be a cubical isometry. Then the family $(\pre{v}{\sigma(g)})_{v \in \mathcal{L}_o^{edge}}$ defines a cubical portrait.
\end{lemma}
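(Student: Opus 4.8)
The plan is to check directly that the family $(\pre{v}{\sigma(g)})_{v \in \mathcal{L}_o^{edge}}$ satisfies each of the five conditions $tree$, $comm$, $par$, $inv$, $end$ of Definition \ref{def:cubical.portrait.on.subtree} (here $T = T_o^{edge}$, with induced root $\epsilon$ and $w_0 = \epsilon$), using only that a cubical isometry $g : X \to Y$ is a bijection on vertices which sends edges to edges, squares to squares, and commutes with orientation reversal, together with properties (1)--(4) of the cubical edge-labelings $\ell$ and $\ell'$. Write $o' := g(o)$. The first step is to record the basic compatibility: by definition $\pre{v}{\sigma(g)}$ sends the label $s$ of an outgoing edge $e$ at $v^{\circ}$ to the label of $g(e)$, which is an outgoing edge at $g(v)^{\circ}$; in particular its domain is $\Sigma_v^{edge} = \Sigma_{v^{\circ}}^{edge}$ and the map depends only on $v^{\circ}$ (and $g$), not on the word $v$, and equation \eqref{eq:main.formula} shows that the map $\sigma$ induced by this portrait agrees with $g$ on $\mathcal{L}_o^{edge}$, so that $\sigma(v) = g(v) \in \mathcal{L}'^{edge}_{o'}$ for all $v$. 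This already yields condition $end$: if $v^{\circ} = w^{\circ}$, then $\pre{v}{\sigma(g)}$ and $\pre{w}{\sigma(g)}$ have the same domain $\Sigma_{v^{\circ}}^{edge}$ and are defined by the same rule on it.

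For condition $tree$: since $g$ is bijective and edge-preserving, it restricts to a bijection from the outgoing edges at $v^{\circ}$ onto the outgoing edges at $g(v)^{\circ} = \sigma(v)^{\circ}$, hence $\pre{v}{\sigma(g)}$ is a bijection $\Sigma_v^{edge} \to \Sigma'^{edge}_{\sigma(v)}$ (and for $v = \epsilon$ this says $\pre{\epsilon}{\sigma(g)}(\Sigma_{\epsilon}^{edge}) = \Sigma'^{edge}_{\epsilon}$, so we indeed have a portrait on $T_o^{edge}$ to $\epsilon$). For condition $comm$: if $s, t \in \Sigma_v^{edge}$ with $[s,t] = 1$, the edges at $v^{\circ}$ labeled $s$ and $t$ span a square $S$; then $g(S)$ is a square in $Y$ spanned by the edges at $g(v)^{\circ}$ labeled $\pre{v}{\sigma(g)}(s)$ and $\pre{v}{\sigma(g)}(t)$, so $[\pre{v}{\sigma(g)}(s), \pre{v}{\sigma(g)}(t)] = 1$; applying the same argument to the cubical isometry $g^{-1}$ gives the reverse implication. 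For condition $inv$: the reversal of the edge $e$ at $v^{\circ}$ labeled $s$ is the edge $e^{-1}$ at $vs^{\circ}$ labeled $s^{-1}$ by property (4) of $\ell$, and $g(e^{-1}) = g(e)^{-1}$; reading off its label and using property (4) of $\ell'$ gives $\pre{vs}{\sigma(g)}(s^{-1}) = \pre{v}{\sigma(g)}(s)^{-1}$.

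The condition demanding the most care is $par$. Given $v \in T$, $s \in \Sigma_v^{edge}$ and $t \in \Sigma_v^{edge}$ with $[s,t] = 1$, property (3) of $\ell$ ensures the edges at $v^{\circ}$ labeled $s$ and $t$ span a square $S$, whose fourth vertex equals $vst^{\circ} = vts^{\circ}$; the edge of $S$ issuing from $vs^{\circ}$ that is parallel in $S$ to the $t$-labeled edge at $v^{\circ}$ carries label $t$ by property (2), so $t \in \Sigma_{vs}^{edge}$ and both sides of the desired equality are defined. Now $g(S)$ is a square in $Y$ having an edge from $g(v)^{\circ}$ labeled $\pre{v}{\sigma(g)}(t)$ and the parallel-in-$g(S)$ edge from $g(vs)^{\circ}$ labeled $\pre{vs}{\sigma(g)}(t)$; since $g$ carries a pair of edges parallel in $S$ to a pair of edges parallel in $g(S)$, these two oriented edges are parallel in $Y$, so property (2) of $\ell'$ forces $\pre{v}{\sigma(g)}(t) = \pre{vs}{\sigma(g)}(t)$. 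This completes the verification. None of the five points is deep: the only real point of vigilance throughout is keeping track of which vertex each oriented edge issues from and invoking the correct structural property --- (2) for parallelism, (4) for reversal, (3) for the vertex-independence of commutation --- but $par$ is the one place where the geometry of squares is genuinely used, so I expect it to be the main (and only) obstacle.
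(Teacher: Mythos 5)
Your proposal is correct and follows essentially the same route as the paper: identify $\pre{v}{\sigma(g)}$ with the label of the image under $g$ of the outgoing edge at $v^{\circ}$, then verify $tree$, $comm$, $par$, $inv$, $end$ one by one, with $par$ handled exactly as in the paper via the square spanned at $v^{\circ}$ and property (2) of $\ell'$ applied to the parallel image edges. The only cosmetic differences are that you argue $tree$ directly from bijectivity of $g$ on edges rather than citing Theorem \ref{thm:characterising.tree.isomorphisms}, and you make the use of $g^{-1}$ for the reverse implication in $comm$ explicit.
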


\begin{proof}
	Choose some vertex $o \in X^{(0)}$ and let $o' := g(o)$. It is clear that $g$ defines a rooted tree-isomorphism $T_o^{edge} \rightarrow T'^{edge}_{o'}$ and thus $(\pre{v}{\sigma(g)})_{v \in \mathcal{L}_o^{edge}}$ is a portrait on $T_o^{edge}$ to $\epsilon$ in $T'^{edge}_{o'}$. We are left to show that this portrait is cubical.
	
	Condition $tree$: As established in Theorem \ref{thm:characterising.tree.isomorphisms}, for any rooted tree-isomorphism $g : T_o^{edge} \rightarrow T'^{edge}_{o'}$ and any $v \in \mathcal{L}^{edge}_o$, the map $\pre{v}{\sigma(g)}$ is a bijection from $\Sigma^{edge}_v$ to $\Sigma'^{edge}_{g(v)}$.
	
	Condition $comm$: Let $v \in \mathcal{L}_o^{edge}$ and $s, t \in \Sigma^{edge}_v$. By construction, $\pre{v}{\sigma(g)}$ describes the local action of the cubical isometry $g$ on the outgoing edges at $v^{\circ}$. In particular, the outgoing edges labeled $s$ and $t$ span a square if and only if the outgoing edges at $g(v)$ labeled $\pre{v}{\sigma(g)}(s)$ and $\pre{v}{\sigma(g)}(t)$ span a square. Thus, $[s,t] = [\pre{v}{\sigma(g)}(s), \pre{v}{\sigma(g)}(t)]$.
	
	Condition $par$: Let $v \in \mathcal{L}_o^{edge}$ and let $s, t \in \Sigma_v^{edge}$ such that $[s,t] = 1$. Let $e$ denote the oriented edge that starts at $v^{\circ}$ and is labeled $t$. Consider the square at $v^{\circ}$ spanned by the outgoing edges labeled $s$ and $t$. This square contains an oriented edge, denote it $e'$, that starts at $vs^{\circ}$ and is parallel to $e$. Since $g$ is a cubical isometry, it sends $g(e)$ and $g(e')$ into the same oriented parallel class. Thus, $\ell'(g(e)) = \ell'(g(e'))$ and we calculate
\[ \pre{v}{\sigma(g)}(t) = \ell'( g(e) ) = \ell'( g(e') ) = \pre{vs}{\sigma(g)}(t). \]

	Condition $inv$: Let $v \in \mathcal{L}_o^{edge}$ and $s \in \Sigma_v^{edge}$. Let $e$ be the oriented edge that starts at $v^{\circ}$ and is labeled $s$. Recall that $e^{-1}$ denotes the same edge with reverse orientation. By definition of cubical edge-labelings, $e^{-1}$ is labeled by $s^{-1}$. Since $g$ is a cubical isometry, $g(e)$ and $g(e')$ are the same edge with opposite orientations. Thus,
\[  \pre{v}{\sigma(g)}(s)^{-1} = \ell'( g(e) )^{-1} = \ell'( g(e)^{-1} ) = \ell'( g(e^{-1}) ) = \pre{vs}{\sigma(g)}(s^{-1}). \]

	Condition $end$: Let $v, w \in \mathcal{L}_o^{edge}$ such that $v^{\circ} = w^{\circ}$. Let $s \in \Sigma_v^{edge}$ and $e$ the oriented edge starting at $v^{\circ}$ labeled $s$. By definition of the local actions, we compute
\[ \pre{v}{\sigma(g)}(s) = \ell'( g(e) ) = \pre{w}{\sigma(g)}(s). \]

	Thus, $(\pre{v}{\sigma(g)})_{v \in \mathcal{L}_o^{edge}}$ is a cubical portrait.	
\end{proof}

Before we prove the converse, we need some preparation. Let $(\pre{v}{\sigma})_{v \in \mathcal{L}_o^{edge}}$ be a cubical portrait and define $\sigma$ according to equation (\ref{eq:definition.of.sigma}). If $\sigma$ is to induce a cubical isometry, the following is the first important thing that needs to be shown.

\begin{proposition} \label{prop:cubical.isometries.are.well.defined.on.vertices}
	Let $(\pre{v}{\sigma})_{v \in \mathcal{L}_o^{edge}}$ be a cubical portrait and let $v, w \in \mathcal{L}_o^{edge}$. Then $v^{\circ} = w^{\circ}$ if and only if $\sigma(v)^{\circ} = \sigma(w)^{\circ}$.
	
\end{proposition}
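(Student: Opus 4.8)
The plan is to reduce the statement to how $\sigma$ acts on \emph{elementary moves} between edge-paths. Recall the standard fact that $v, w \in \mathcal{L}_o^{edge}$ satisfy $v^{\circ} = w^{\circ}$ if and only if $v$ and $w$ are joined by a finite sequence of elementary moves, an elementary move being either the insertion or deletion of a backtrack (passing between $\alpha\beta$ and $\alpha s s^{-1}\beta$ with $s \in \Sigma_\alpha^{edge}$) or a square flip (passing between $\alpha s t\beta$ and $\alpha t s\beta$ with $s,t \in \Sigma_\alpha^{edge}$ and $[s,t] = 1$). For a self-contained version one first reduces $v$ and $w$ to combinatorial geodesics using Lemma \ref{lem:innermost.cancellation.reduction}, which exhibits each reduction as a string of flips followed by one backtrack deletion, and then invokes flip-connectivity of combinatorial geodesics between two vertices of a $\CAT$ cube complex. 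Since, by condition $tree$, the portrait induces a rooted tree-isomorphism $T_o^{edge} \to T'^{edge}_{o'}$ (Theorem \ref{thm:characterising.tree.isomorphisms}, Remark \ref{rem:basics.on.cubical.conditions}), the map $\sigma$ is a bijection $\mathcal{L}_o^{edge} \to \mathcal{L}'^{edge}_{o'}$, so $\sigma(v)^{\circ}$ is well-defined and the same fact applies to $\sigma(v), \sigma(w)$ in $Y$. It therefore suffices to prove the \emph{moves lemma}: if $v'$ is obtained from $v$ by a single elementary move, then $\sigma(v')$ is obtained from $\sigma(v)$ by a single elementary move; and, conversely, every elementary move $\sigma(v) \to z$ in $Y$ is $\sigma$ of an elementary move $v \to v'$ in $X$. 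Given this, $v, w$ are joined by elementary moves iff $\sigma(v), \sigma(w)$ are — to lift a sequence of moves in $Y$ one applies the converse part step by step, and injectivity of $\sigma$ forces the lift to end at $\sigma^{-1}(\sigma(w)) = w$ — and the proposition follows from the standard fact applied in $X$ and in $Y$.

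For the forward part of the moves lemma I would compute directly, using $\sigma(\alpha\gamma) = \sigma(\alpha)\,\pre{\alpha}{\sigma}(\gamma)$. For a backtrack,
\[ \sigma(\alpha s s^{-1}\beta) = \sigma(\alpha)\, \pre{\alpha}{\sigma}(s)\, \pre{\alpha s}{\sigma}(s^{-1})\, \pre{\alpha s s^{-1}}{\sigma}(\beta); \]
condition $inv$ rewrites $\pre{\alpha s}{\sigma}(s^{-1}) = \pre{\alpha}{\sigma}(s)^{-1}$, and condition $end$, applied letter by letter along $\beta$ (legitimate because $(\alpha s s^{-1})^{\circ} = \alpha^{\circ}$ and $\mathcal{L}^{edge}_u$ depends only on $u^{\circ}$), gives $\pre{\alpha s s^{-1}}{\sigma}(\beta) = \pre{\alpha}{\sigma}(\beta)$; hence $\sigma(\alpha s s^{-1}\beta)$ is exactly $\sigma(\alpha\beta)$ with the backtrack $\pre{\alpha}{\sigma}(s)\,\pre{\alpha}{\sigma}(s)^{-1}$ inserted. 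For a flip with $[s,t] = 1$, condition $par$ gives $\pre{\alpha s}{\sigma}(t) = \pre{\alpha}{\sigma}(t)$ and $\pre{\alpha t}{\sigma}(s) = \pre{\alpha}{\sigma}(s)$, and condition $end$ along $\beta$ (using $(\alpha s t)^{\circ} = (\alpha t s)^{\circ}$) gives $\pre{\alpha s t}{\sigma}(\beta) = \pre{\alpha t s}{\sigma}(\beta)$, so that
\[ \sigma(\alpha s t\beta) = \sigma(\alpha)\,\pre{\alpha}{\sigma}(s)\,\pre{\alpha}{\sigma}(t)\,\pre{\alpha s t}{\sigma}(\beta), \qquad \sigma(\alpha t s\beta) = \sigma(\alpha)\,\pre{\alpha}{\sigma}(t)\,\pre{\alpha}{\sigma}(s)\,\pre{\alpha s t}{\sigma}(\beta), \]
which differ exactly by transposing the adjacent pair $\pre{\alpha}{\sigma}(s), \pre{\alpha}{\sigma}(t)$; that pair commutes by condition $comm$, since $[\pre{\alpha}{\sigma}(s),\pre{\alpha}{\sigma}(t)] = [s,t] = 1$, so this is a square flip in $Y$. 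The same identities, read backwards, handle backtrack insertions and flips in either direction.

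For the converse part, the key input is the identity $[\pre{\alpha}{\sigma}(s), \pre{\alpha s}{\sigma}(t)] = [s,t]$, valid whenever $s \in \Sigma_\alpha^{edge}$ and $t \in \Sigma_{\alpha s}^{edge}$: by $inv$, $\pre{\alpha s}{\sigma}(s^{-1}) = \pre{\alpha}{\sigma}(s)^{-1}$, and by $comm$ at the vertex $(\alpha s)^{\circ}$, $[\pre{\alpha}{\sigma}(s)^{-1}, \pre{\alpha s}{\sigma}(t)] = [s^{-1},t] = [s,t]$. Now take an elementary move $\sigma(v) \to z$ acting on two consecutive letters $p, q$ of $\sigma(v)$, and write $v = \alpha s s'\beta$ so that $p = \pre{\alpha}{\sigma}(s)$ and $q = \pre{\alpha s}{\sigma}(s')$: if $q = p^{-1}$ then $\pre{\alpha s}{\sigma}(s') = p^{-1} = \pre{\alpha s}{\sigma}(s^{-1})$ forces $s' = s^{-1}$ by injectivity of $\pre{\alpha s}{\sigma}$, so $v$ has a backtrack there; if instead $[p,q] = 1$, the identity forces $[s,s'] = 1$, so $v$ admits the corresponding flip; in either case the forward computation shows this move of $v$ maps to the given move of $\sigma(v)$, and backtrack insertions into $\sigma(v)$ pull back by inverting the appropriate bijection $\pre{\alpha}{\sigma}$. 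The main obstacle is precisely this converse: one must rule out that $\sigma$ turns a non-backtrack into a backtrack, or a non-commuting adjacent pair into a commuting one — exactly what the identity $[\pre{\alpha}{\sigma}(s),\pre{\alpha s}{\sigma}(t)] = [s,t]$ (combining $inv$ and $comm$) prevents. The only other point requiring care is the repeated letter-by-letter use of condition $end$ along the tail $\beta$, which is why $end$ cannot be dropped from the definition of a cubical portrait.
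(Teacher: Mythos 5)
Your proposal is correct and follows essentially the same route as the paper: the paper also reduces the statement to the standard moves fact (its Lemma \ref{lem:moves.for.edge.paths}) together with a lemma that cancellation- and commutation-moves are carried over by any portrait satisfying $comm$, $par$, $inv$, $end$ (its Lemma \ref{lem:moves.are.carried.over}), with the same forward computations. Your converse is phrased as pulling back an arbitrary move applied to $\sigma(v)$ via the identity $[\pre{\alpha}{\sigma}(s),\pre{\alpha s}{\sigma}(t)]=[s,t]$ and injectivity of the local maps, whereas the paper reconstructs preimages of both sides of the move and invokes injectivity of $\sigma$; this is only a minor variation of the same argument.
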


Suppose $v, w \in \mathcal{L}_o^{edge}$ such that $v^{\circ} = w^{\circ}$. Identifying $v$ and $w$ with the edge-paths starting at $o$ that they represent, we see that these are two edge-paths that have the same starting point and the same endpoint. The proof of this proposition relies on the idea that any two such edge-paths can be transformed into each other by using a finite sequence of two moves. This is the content of the following Lemma, which is a standard result about $\CAT$ cube complexes.

\begin{lemma} \label{lem:moves.for.edge.paths}
	Let $v, w \in \mathcal{L}_o^{edge}$ such that $v^{\circ} = w^{\circ}$. Then $v$ can be transformed into $w$ by finitely many applications of the following two moves:
\[ u_1 u_2 \leftrightarrow u_1 s s^{-1} u_2, \]
\[ u_1 s t u_2 \leftrightarrow u_1 t s u_2, \]
where $[s,t] = 1$ in the second move.
\end{lemma}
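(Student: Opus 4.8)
The plan is to prove this in two phases; throughout, set $p := v^{\circ} = w^{\circ}$, so that $v$ and $w$ are edge-paths from $o$ to $p$, and recall that the two moves are reversible and preserve both membership in $\mathcal{L}_o^{edge}$ and the endpoint.

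\textbf{Phase 1: reduce both $v$ and $w$ to combinatorial geodesics using the moves.} It suffices to transform an arbitrary $u\in\mathcal{L}_o^{edge}$ into \emph{some} reduced word with the same endpoint, and for this I would induct on the length of $u$. If $u$ is reduced there is nothing to do. Otherwise $u$ contains an innermost cancellation $(s_i,s_j)$ (every reducible word does; see the discussion after Definition \ref{def:innermost.cancellation}). By Lemma \ref{lem:innermost.cancellation.reduction}, $s_i$ commutes with each of $s_{i+1},\dots,s_{j-1}$, so finitely many applications of the second move slide $s_i$ rightward until it is immediately followed by $s_j$; since the $i$-th and $j$-th edges of $u$ cross a common hyperplane in opposite directions (this is precisely what makes $(s_i,s_j)$ a cancellation), properties (2) and (4) of a cubical edge-labeling force $s_j=s_i^{-1}$, so the first move deletes this backtrack. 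The resulting word is again in $\mathcal{L}_o^{edge}$, has endpoint $p$ (Lemma \ref{lem:innermost.cancellation.reduction}), and is strictly shorter, so induction applies. Applying Phase 1 separately to $v$ and to $w$ produces reduced words $v',w'\in\mathcal{L}_o^{edge}$, both ending at $p$, reachable from $v$ and $w$ respectively by the moves.

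\textbf{Phase 2: transform one combinatorial geodesic into another using only the second move.} This is the classical fact that any two combinatorial geodesics between two vertices of a $\CAT$ cube complex differ by finitely many square moves, and I would prove it by induction on $n:=d_{\ell^1}(o,p)$ (the common length of all such geodesics). Write $v'=s_1\cdots s_n$ and let $w'=t_1\cdots t_n$ be another geodesic from $o$ to $p$. The edge at $o$ labeled $s_1$ is dual to a hyperplane $\hat h$ separating $o$ from $p$, hence crossed by $w'$ exactly once, say at step $k$. The key sub-step is that $w'$ can be brought, by square moves, to a word beginning with $s_1$: since the carrier $N(\hat h)$ is a convex subcomplex isomorphic to $\hat h\times[0,1]$ and both $o$ and the $(k-1)$-st vertex of $w'$ lie in it, the geodesic prefix $t_1\cdots t_{k-1}$ stays inside $N(\hat h)$ on the $o$-side of $\hat h$; one then checks that each edge $t_j$ with $j<k$ spans a square with an edge dual to $\hat h$, so $\hat h$ crosses each of those hyperplanes and $t_k$ can be swapped leftward past $t_{k-1},\dots,t_1$, while $t_k=s_1$ because all edges crossing $\hat h$ in a fixed direction are parallel, hence identically labeled. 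After these swaps, $w'$ has been transformed into $s_1 w''$, and $s_2\cdots s_n$ and $w''$ are geodesics of length $n-1$ from $s_1^{\circ}$ to $p$; the induction hypothesis relates them by square moves taking place after the first letter, and prepending $s_1$ gives $v'\leftrightarrow s_1 w''\leftrightarrow w'$.

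Combining the two phases yields $v\leftrightarrow v'\leftrightarrow w'\leftrightarrow w$. The main obstacle is Phase 2, and within it the sub-step that a geodesic can be rearranged to cross first a given hyperplane adjacent to $o$: Phase 1 is essentially immediate from Lemma \ref{lem:innermost.cancellation.reduction}, whereas Phase 2 is where the geometry of $\CAT$ cube complexes (convexity of hyperplane carriers, or equivalently a shelling argument for a minimal-area disk diagram filling the loop $v'\bar w'$) genuinely enters. If one prefers, Phase 2 may simply be quoted from the standard literature on $\CAT$ cube complexes.
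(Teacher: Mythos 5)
Your proposal is correct, but note that the paper itself does not prove this lemma at all: it is invoked as ``a standard result about $\CAT$ cube complexes'' and used as a black box in the proof of Proposition \ref{prop:cubical.isometries.are.well.defined.on.vertices}. So there is no in-paper argument to match, and what you have done is supply the standard proof. Your Phase 1 is exactly the reduction the paper's own toolkit supports: an innermost cancellation $(s_i,s_j)$ exists in any reducible word, Lemma \ref{lem:innermost.cancellation.reduction} gives the commutations needed to slide $s_i$ next to $s_j$, and properties (2) and (4) of the cubical edge-labeling give $s_j=s_i^{-1}$ so the cancellation-move applies and the length drops; the only point to be explicit about is that the commutation-move really does stay inside $\mathcal{L}_o^{edge}$, which follows from property (3) together with the paper's observation that $[s,t]=[s,t^{-1}]$ (this is what lets you transport the square along the path). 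Your Phase 2 is the classical statement that any two combinatorial geodesics with the same endpoints differ by square moves; your carrier-convexity argument is the usual one (the prefix $t_1\cdots t_{k-1}$ is a geodesic between two vertices of the convex set $N(\hat h)\cap\{o\text{-side}\}$, hence lies in it, so each of its edges spans a square with an edge dual to $\hat h$, and $t_k=s_1$ by property (2)); one small step worth saying aloud is that after sliding $t_k$ to the front the word is still reduced (commutation-moves do not change the set of hyperplanes crossed), so the suffix $w''$ is again a geodesic and the induction on $d_{\ell^1}(o,p)$ goes through. With those two minor points made explicit, your two-phase argument is a complete and correct proof of the lemma, and quoting Phase 2 from the literature, as you suggest, would be equally consistent with how the paper treats it.
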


We refer to the first move as the cancellation-move and to the second as the commutation-move. The following Lemma is the key to the proof of Proposition \ref{prop:cubical.isometries.are.well.defined.on.vertices}. Note that we do not need property $tree$ for this Lemma.

\begin{lemma} \label{lem:moves.are.carried.over}
	Let $v, w \in \mathcal{L}^{edge}_o$ and let $(\pre{v}{\sigma})_{v \in \mathcal{L}^{edge}_o}$ be a portrait that satisfies $comm$, $par$, $inv$, and $end$. Then $v$ and $w$ are related by a finite sequence of the moves in Lemma \ref{lem:moves.for.edge.paths} if and only if $\sigma(v)$ and $\sigma(w) \in \Sigma'^{*}$ are.
	
	In fact, applying one cancellation-move to $v$ corresponds to applying one cancellation-move to $\sigma(v)$ and applying one commutation-move to $v$ corresponds to applying one commutation-move to $\sigma(v)$.
\end{lemma}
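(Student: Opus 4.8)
The plan is to reduce the statement to a purely local claim about the two moves and then verify that claim directly from the portrait conditions $comm$, $par$, $inv$, and $end$. Since any two edge-paths related by a finite sequence of moves are connected one move at a time, it suffices to show: (i) if $v'$ is obtained from $v$ by a single cancellation-move, then $\sigma(v')$ is obtained from $\sigma(v)$ by a single cancellation-move (and symmetrically for the inverse move), and (ii) the analogous statement for commutation-moves. The ``in fact'' clause is exactly this local statement, and the biconditional at the level of finite sequences follows formally: one direction is the concatenation of single-move steps just described, and the reverse direction uses that each step is invertible (a cancellation-move on $\sigma(v)$ must, by (i), come from a cancellation-move on $v$, since $\sigma$ is built letter-by-letter from the $\pre{x}{\sigma}$ and these are injective, so distinct words over $\Sigma$ at each stage produce distinct words over $\Sigma'$).

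First I would handle the \textbf{cancellation-move}. Suppose $v = u_1 u_2$ and $v' = u_1 s s^{-1} u_2$ with $s \in \Sigma^{edge}_{u_1^{\circ}}$. Write $u_1 = s_1 \dots s_k$. Then by formula (\ref{eq:definition.of.sigma}),
\[ \sigma(v') = \sigma(u_1)\, \pre{u_1}{\sigma}(s)\, \pre{u_1 s}{\sigma}(s^{-1})\, \pre{u_1 s}{\sigma}(u_2), \]
where I use that $u_1 s s^{-1}$ and $u_1$ have the same endpoint (so $\pre{u_1 s s^{-1}}{\sigma} = \pre{u_1}{\sigma}$ by $end$, which is what lets the tail $\pre{\cdot}{\sigma}(u_2)$ match). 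By condition $inv$ applied at $u_1$ with the letter $s$, we have $\pre{u_1 s}{\sigma}(s^{-1}) = \pre{u_1}{\sigma}(s)^{-1}$, so the two middle letters are mutually inverse; and by $end$ the tail equals $\pre{u_1}{\sigma}(u_2)$. Hence $\sigma(v') = \sigma(u_1)\, t\, t^{-1}\, \pre{u_1}{\sigma}(u_2) = \sigma(u_1)\,\pre{u_1}{\sigma}(u_2)$ with an inserted cancelling pair $t t^{-1}$, $t := \pre{u_1}{\sigma}(s)$, which is exactly a cancellation-move away from $\sigma(v) = \sigma(u_1)\,\pre{u_1}{\sigma}(u_2)$. (One must also note that since $g$-free ingredients are not needed here, commutation of $s$ with $s^{-1}$ need not be invoked; the only subtlety is that $tt^{-1}$ is a legitimate cancellation pair, which holds because $t^{-1}$ is the involution-image of $t$.)

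Next the \textbf{commutation-move}: $v = u_1 s t u_2$, $v' = u_1 t s u_2$ with $[s,t]=1$. Again expanding via (\ref{eq:definition.of.sigma}), the prefixes $\sigma(u_1)$ agree, and I need to compare $\pre{u_1}{\sigma}(s)\,\pre{u_1 s}{\sigma}(t)$ with $\pre{u_1}{\sigma}(t)\,\pre{u_1 t}{\sigma}(s)$, and then check that the tails agree. By $par$ at $u_1$ with $[s,t]=1$ we get $\pre{u_1 s}{\sigma}(t) = \pre{u_1}{\sigma}(t)$ and $\pre{u_1 t}{\sigma}(s) = \pre{u_1}{\sigma}(s)$; and by $comm$, $[\pre{u_1}{\sigma}(s), \pre{u_1}{\sigma}(t)] = [s,t] = 1$, so the two letters $\pre{u_1}{\sigma}(s)$ and $\pre{u_1}{\sigma}(t)$ commute in $\Sigma'$. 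Thus the length-two middle block of $\sigma(v')$ is obtained from that of $\sigma(v)$ by swapping two commuting letters. For the tail, $u_1 s t$ and $u_1 t s$ have the same endpoint (since $[s,t]=1$), so by $end$ the maps $\pre{u_1 s t}{\sigma}$ and $\pre{u_1 t s}{\sigma}$ agree and the tails $\pre{\cdot}{\sigma}(u_2)$ coincide. Therefore $\sigma(v')$ is a single commutation-move away from $\sigma(v)$.

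The \textbf{main obstacle} I anticipate is purely bookkeeping: making sure the tails $\pre{\cdot}{\sigma}(u_2)$ really match after each move, which is exactly where condition $end$ (``maps agree on words with the same endpoint'') is indispensable — without it, inserting or permuting letters in the middle of a word could change all subsequent local actions. Once $end$ is in place this is routine, but it must be invoked carefully at precisely the two intermediate vertices involved in each move. A secondary point to state cleanly is the reverse implication of the biconditional: because $\sigma$ is reconstructed from the portrait via (\ref{eq:definition.of.sigma}) and all $\pre{x}{\sigma}$ are injective, the map on the relevant finite ``word stages'' is injective, so a move witnessed downstairs in $\Sigma'^*$ has a unique preimage move upstairs; this is what upgrades ``each move transfers'' to ``$v \sim w$ if and only if $\sigma(v) \sim \sigma(w)$.'' No deeper geometry is needed — Lemma \ref{lem:moves.for.edge.paths} is not even used in this lemma, only in its application to Proposition \ref{prop:cubical.isometries.are.well.defined.on.vertices}.
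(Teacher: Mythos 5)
Your forward direction is correct and is essentially the paper's own argument: $end$ to make the tails $\pre{\cdot}{\sigma}(u_2)$ agree, $inv$ for the cancellation-move, and $par$ together with $comm$ for the commutation-move (modulo the small typo that the tail in your first display should be $\pre{u_1 s s^{-1}}{\sigma}(u_2)$, which you then correctly identify with $\pre{u_1}{\sigma}(u_2)$ via $end$). Your closing remark that Lemma \ref{lem:moves.for.edge.paths} is not needed inside this lemma is also right.

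The gap is in the converse, which you dismiss as formal: you claim a move on $\sigma(v)$ ``must, by (i), come from'' a move on $v$ because the local maps are injective. Injectivity only gives \emph{uniqueness} of a lift; it does not show that the upstairs configuration is a \emph{legal} move. Concretely, if $\sigma(v) = u'_1 s' s'^{-1} u'_2$ and $\sigma(w) = u'_1 u'_2$, then reading $v$ letter by letter gives $v = u_1 s r \tilde{u}_2$ with $\pre{u_1}{\sigma}(s) = s'$ and $\pre{u_1 s}{\sigma}(r) = s'^{-1}$, but nothing formal forces $r = s^{-1}$: that requires $inv$ (which gives $\pre{u_1 s}{\sigma}(s^{-1}) = s'^{-1}$, and only then injectivity of $\pre{u_1 s}{\sigma}$ yields $r = s^{-1}$); likewise one must use $end$ to see the suffixes of $v$ and $w$ coincide. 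For a downstairs commutation-move one additionally needs the backward implication of $comm$ to conclude $[s,t]=1$ upstairs before $par$ and $end$ can even be applied. In other words, the axioms have to be deployed a second time in the reverse direction; if, say, $inv$ were dropped, a portrait could send a non-inverse pair of letters of $v$ to $s's'^{-1}$, and $v,w$ would not be related by the corresponding move, so the converse is genuinely not a consequence of injectivity plus the forward direction. The paper's proof of the ``$\Leftarrow$'' half does exactly this: it extracts $u_1$, $s$ (and $t$), $u_2$ from $v$ and $w$, verifies via $inv$, $comm$, $par$, $end$ that $\sigma(u_1 s s^{-1} u_2) = \sigma(v)$ and $\sigma(u_1 u_2) = \sigma(w)$ (respectively the commutation analogue), and only then invokes injectivity of $\sigma$ to identify these words with $v$ and $w$. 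Your write-up needs this step spelled out; as it stands, the biconditional is asserted rather than proved.
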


\begin{proof}
	Let $u_1 \in \mathcal{L}_o^{edge}$ and $u_2 = t_1 \dots t_l \in \mathcal{L}_{u_1}^{edge}$. In analogy to the notation $\pre{u_1}{\sigma(g)}(u_2)$, we define
	\[ \pre{u_1}{\sigma}(u_2) := \pre{u_1}{\sigma}(t_1) \dots \pre{u_1 t_1 \dots t_{i-1}}{\sigma}(t_i) \dots \pre{u_1 t_1 \dots t_{l-1}}{\sigma}(t_l). \]
	We first note that, because $(\pre{v}{\sigma})_{v \in \mathcal{L}_o^{edge}}$ satisfies $end$, we know that, if $u_1^{\circ} = u'^{\circ}_1$, then $\pre{u_1}{\sigma}(u_2) = \pre{u'_1}{\sigma}(u_2)$. Let $v, w \in \mathcal{L}_o^{edge}$. For both directions of the statement, we only have to consider the case that $v$ and $w$ (or $\sigma(v)$ and $\sigma(w)$) are related by one cancellation-move or one commutation-move
	
	$``\Rightarrow''$: Suppose there are $u_1, s, u_2$ such that
	\[ v = u_1 s s^{-1} u_2, \quad w = u_1 u_2. \]
	As noted above, condition $end$ implies that
	\[ \pre{u_1}{\sigma}(u_2) = \pre{u_1 s s^{-1}}{\sigma}( u_2 ). \]
	Condition $inv$ implies that
	\[ \pre{u_1}{\sigma}(s)^{-1} = \pre{u_1 s}{\sigma}(s^{-1}).  \]
	By equations (\ref{eq:main.formula}) and (\ref{eq:main.formula.extended}), we know that
	\begin{equation*}
		\begin{split}
			\sigma(v) & = \sigma(u_1 s s^{-1} u_2)\\
			& = \sigma(u_1) \pre{u_1}{\sigma}(s) \pre{u_1 s}{\sigma}(s^{-1}) \pre{u_1 s s^{-1}}{\sigma}(u_2)\\
			& = \sigma(u_1) \pre{u_1}{\sigma}(s) \pre{u_1}{\sigma}(s)^{-1} \pre{u_1}{\sigma}(u_2)
		\end{split}
	\end{equation*}
	and
	\begin{equation*}
		\sigma(u_1) \pre{u_1}{\sigma}(u_2) = \sigma(w).
	\end{equation*}
	Thus, $\sigma(v)$ and $\sigma(w)$ are related by one cancellation-move.\\
	
	Now suppose there are $u_1, s, t, u_2$ such that $[s,t] = 1$ and
	\[ v = u_1 s t u_2, \quad w = u_1 t s u_2. \]
	By $end$, we know that
	\[ \pre{u_1 s t}{\sigma}(u_2) = \pre{u_1 t s}{\sigma}(u_2). \]
	By $comm$, we know that
	\[ [\pre{u_1}{\sigma}(s), \pre{u_1}{\sigma}(t)] = 1. \]
	Combining this with $par$, we obtain
	\[ \pre{u_1}{\sigma}(s) = \pre{u_1 t}{\sigma}(s), \]
	\[ \pre{u_1}{\sigma}(t) = \pre{u_1 s}{\sigma}(t). \]
	Following the same principle as in the previous case, we make the following two computations:
	\begin{equation*}
		\begin{split}
			\sigma(v) & = \sigma(u_1 s t u_2)\\
			& = \sigma(u_1) \pre{u_1}{\sigma}(s) \pre{u_1 s}{\sigma}(t) \pre{u_1 s t}{\sigma}(u_2)\\
			& = \sigma(u_1) \pre{u_1}{\sigma}(s) \pre{u_1}{\sigma}(t) \pre{u_1st}{\sigma}(u_2),
		\end{split}
	\end{equation*}
	\begin{equation*}
		\begin{split}
			\sigma(w) & = \sigma(u_1 t s u_2)^{\circ}\\
			& = \sigma(u_1) \pre{u_1}{\sigma}(t) \pre{u_1 t}{\sigma}(s) \pre{u_1 t s}{\sigma}(u_2)\\
			& = \sigma(u_1) \pre{u_1}{\sigma}(t) \pre{u_1}{\sigma}(s) \pre{u_1 s t}{\sigma}(u_2).
		\end{split}
	\end{equation*}
	We conclude that $\sigma(v)$ and $\sigma(w)$ are related by one commutation-move.\\

	$``\Leftarrow''$: Suppose there are $u'_1, s', u'_2$ such that
	\[ \sigma(v) = u'_1 s' s'^{-1} u'_2, \quad \sigma(w) = u'_1 u'_2. \]
	By construction of the map $\sigma : \mathcal{L}_o^{edge} \rightarrow \Sigma'^{*}$, we find the following elements:
	\begin{itemize}
		\item A word $u_1 \in \mathcal{L}_o^{edge}$ such that $\sigma(u_1) = u'_1$.
		
		\item A letter $s \in \Sigma^{edge}_{u_1}$ such that $\pre{u_1}{\sigma}(s) = s'$.
		
		\item A word $u_2 \in \mathcal{L}_{u_1}^{edge}$ such that $\pre{u_1}{\sigma}(u_2) = u'_2$.
	\end{itemize}
	Using $end$, $inv$, and equations (\ref{eq:main.formula}) and (\ref{eq:main.formula.extended}), we compute
	\begin{equation*}
		\begin{split}
			\sigma(u_1 s s^{-1} u_2) & = \sigma(u_1) \pre{u_1}{\sigma}(s) \pre{u_1 s}{\sigma}(s^{-1}) \pre{u_1 s s^{-1}}{\sigma}(u_2)\\
			& = u'_1 s' s'^{-1} u'_2\\
			& = \sigma(v).
		\end{split}
	\end{equation*}
	On the other hand,
	\[ \sigma(u_1 u_2) = \sigma(u_1) \pre{u_1}{\sigma}(u_2) = u'_1 u'_2 = \sigma(w). \]
	Since $\sigma$ is injective, we conclude that $v = u_1 s s^{-1} u_2$, while $w = u_1 u_2$. Thus, $v$ and $w$ are related by one cancellation-move.\\
		
	Suppose there are $u'_1, s', t', u'_2$ such that $[s',t'] = 1$ and
	\[Ê\sigma(v) = u'_1 s' t' u'_2, \quad \sigma(w) = u'_1 t' s' u'_2. \]
	As before, using the definition of $\sigma : \mathcal{L}_o^{edge} \rightarrow \Sigma^{*}$, we find the following elements:
	\begin{itemize}
		\item A word $u_1 \in \mathcal{L}_o^{edge}$ such that $\sigma(u_1) = u'_1$.
		
		\item A letter $s \in \Sigma^{edge}_{u_1}$ such that $\pre{u_1}{\sigma}(s) = s'$.
		
		\item A letter $t \in \Sigma^{edge}_{u_1}$ such that $\pre{u_1 }{\sigma}(t) = t'$. (By $par$ we know that $\sigma_{u_1 s}(t) = \sigma_{u_1}(t) = t'$.)
		
		\item A word $u_2 \in \mathcal{L}_{u_1 s t}^{edge}$ such that $\pre{u_1 s t}{\sigma}(u_2) = u'_2$.
	\end{itemize}
	Since $\pre{u_1}{\sigma}$ satisfies $comm$, we have that
	\[ [s,t] = [\pre{u_1}{\sigma}(s), \pre{u_1}{\sigma}(t)] = [s',t'] = 1.\]
	Using $par$ and equations (\ref{eq:main.formula}) and (\ref{eq:main.formula.extended}), we obtain
	\begin{equation*}
		\begin{split}
			\sigma(u_1 s t u_2) & = \sigma(u_1) \pre{u_1}{\sigma}(s) \pre{u_1 s}{\sigma}(t) \pre{u_1 s t}{\sigma}(u_2) = u'_1 s' t' u'_2 = \sigma(v)
		\end{split}
	\end{equation*}
	On the other hand, using $par$, $end$ and equations (\ref{eq:main.formula}) and (\ref{eq:main.formula.extended}), we compute
	\begin{equation*}
		\begin{split}
			\sigma(u_1 t s u_2) & = \sigma(u_1) \pre{u_1}{\sigma}(t) \pre{u_1 t}{\sigma}(s) \pre{u_1 t s}{\sigma}(u_2)\\
			& = u'_1 t' \pre{u_1}{\sigma}(s) \pre{u_1 s t}{\sigma}(u_2)\\
			& = u'_1 t' s' u'_2\\
			& = \sigma(w).
		\end{split}
	\end{equation*}
	Since $\sigma$ is injective, we conclude that $v = u_1 s t u_2$ and $w = u_1 t s u_2$. Thus, $v$ and $w$ are related by one commutation-move, which finishes the proof.
\end{proof}

\begin{proof}[Proof of Proposition \ref{prop:cubical.isometries.are.well.defined.on.vertices}]
	Let $v, w \in \mathcal{L}^{edge}_o$. Since $(\pre{v}{\sigma})_{v \in \mathcal{L}^{edge}_o}$ is a cubical portrait, we conclude from property $tree$ that $\sigma(v), \sigma(w) \in \mathcal{L}'^{edge}_{o'}$ and thus $\sigma(v)^{\circ}$ and $\sigma(w)^{\circ}$ are well-defined.
	
	Combining Lemma \ref{lem:moves.for.edge.paths} and Lemma \ref{lem:moves.are.carried.over}, we obtain that $v^{\circ} = w^{\circ}$ if and only if $v$ and $w$ are related by a finite sequence of cancellation-moves and commutation-moves, which holds if and only if the same holds for $\sigma(v)$ and $\sigma(w)$, which holds if and only if $\sigma(v)^{\circ} =Ê\sigma(w)^{\circ}$. This proves the Proposition.
\end{proof}

With Proposition \ref{prop:cubical.isometries.are.well.defined.on.vertices}, the last part of the proof of Theorem \ref{thm:characterization.of.cubical.portraits} becomes straight-forward.

\begin{lemma} \label{lem:converse.of.main.theorem}
	If $(\pre{v}{\sigma})_{v \in \mathcal{L}_o^{edge}}$ is a cubical portrait, then $\sigma$ induces a cubical isometry that sends $o$ to $o'$.
\end{lemma}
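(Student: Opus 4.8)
The plan is to build the cubical isometry directly on vertices and then extend it. Since $X$ is connected, every vertex $p\in X^{(0)}$ equals $v^{\circ}$ for some $v\in\mathcal{L}_o^{edge}$, and by condition $tree$ the portrait induces, via Theorem \ref{thm:characterising.tree.isomorphisms}, a rooted tree-isomorphism $\sigma:T_o^{edge}\to T'^{edge}_{o'}$; in particular $\sigma(v)\in\mathcal{L}'^{edge}_{o'}$, so $\sigma(v)^{\circ}$ makes sense. I would define $g(p):=\sigma(v)^{\circ}$. The crucial point is that this is well defined, and that is exactly Proposition \ref{prop:cubical.isometries.are.well.defined.on.vertices}: $v^{\circ}=w^{\circ}$ if and only if $\sigma(v)^{\circ}=\sigma(w)^{\circ}$. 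This same equivalence gives injectivity of $g$, while surjectivity follows because $\sigma$ is onto the vertices of $T'^{edge}_{o'}$ and $Y$ is connected. Also $g(o)=\sigma(\epsilon)^{\circ}=\epsilon^{\circ}=o'$.

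Next I would check that $g$ is an isomorphism of $1$-skeleta. Fix $p=v^{\circ}$. By condition $end$ the map $\pre{v}{\sigma}$ depends only on $p$, and by condition $tree$ it is a bijection $\Sigma_v^{edge}\to\Sigma'^{edge}_{\sigma(v)}$, i.e.\ a bijection from the oriented edges at $p$ to those at $g(p)$. It is compatible with $g$ on vertices: the edge at $p$ labelled $s$ ends at $(vs)^{\circ}$, and $g\big((vs)^{\circ}\big)=\sigma(vs)^{\circ}=\big(\sigma(v)\,\pre{v}{\sigma}(s)\big)^{\circ}$ is the endpoint of the edge at $g(p)$ labelled $\pre{v}{\sigma}(s)$. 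Hence $g$ carries edges to edges bijectively and is a graph isomorphism $X^{(1)}\to Y^{(1)}$. Moreover $g$ and $g^{-1}$ preserve squares: two edges at $p$ labelled $s,t$ span a square iff $[s,t]=1$, which by condition $comm$ holds iff $[\pre{v}{\sigma}(s),\pre{v}{\sigma}(t)]=1$, i.e.\ iff their images span a square at $g(p)$.

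I would then invoke the standard fact that a $\CAT$ cube complex is determined by its $2$-skeleton --- the link of every vertex is a flag complex (see \cite{Wise21}) --- so a square-preserving isomorphism of $1$-skeleta extends uniquely to a cubical isomorphism, which is automatically an isometry for the $\CAT$-metrics. This yields the cubical isometry $g:X\to Y$ with $g(o)=o'$. Finally one unwinds the definitions to see that $\sigma$ is the portrait of $g$: for $v\in\mathcal{L}_o^{edge}$ and $s\in\Sigma_v^{edge}$, the rooted tree-isomorphism induced by $g$ sends $v\mapsto\sigma(v)$ and $vs\mapsto\sigma(vs)=\sigma(v)\,\pre{v}{\sigma}(s)$, so $\pre{v}{\sigma(g)}=\pre{v}{\sigma}$; thus $(\pre{v}{\sigma})_v$ defines the cubical isometry $g$.

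The only genuinely delicate step is the first one --- well-definedness of $g$ on vertices --- and this is precisely Proposition \ref{prop:cubical.isometries.are.well.defined.on.vertices} (hence Lemma \ref{lem:moves.are.carried.over}); once that is in hand, the rest is routine bookkeeping together with the flag-link extension principle for $\CAT$ cube complexes.
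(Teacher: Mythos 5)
Your proposal is correct and follows essentially the same route as the paper: define $g$ on vertices via $\sigma$, use Proposition \ref{prop:cubical.isometries.are.well.defined.on.vertices} for well-definedness and injectivity, get bijectivity from condition $tree$ via Theorem \ref{thm:characterising.tree.isomorphisms}, and check that adjacency is preserved in both directions. The only cosmetic difference is your explicit square-preservation check and appeal to the flag-link extension principle, whereas the paper stops at the isomorphism of $1$-skeleta and relies on the standard fact (quoted elsewhere in the paper) that such a graph isomorphism of $\CAT$ cube complexes extends uniquely to a cubical isomorphism.
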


\begin{proof}
	By Proposition \ref{prop:cubical.isometries.are.well.defined.on.vertices}, $\sigma$ induces an injective map on vertices $g : X^{(0)} \rightarrow Y^{(0)}$. This map is bijective, since condition $tree$ implies that $\sigma$ is an isomorphism $T_o^{edge} \rightarrow T'^{edge}_{o'}$ on trees. We are left to show that two vertices $p, p' \in X^{(0)}$ are connected by an edge if and only if $g(p)$ and $g(p')$ are.
	
	Let $v, w \in \mathcal{L}_o^{edge}$ such that $v^{\circ}$ and $w^{\circ}$ are connected by an edge. Then there exists some $s \in \Sigma^{edge}_{v}$ such that $w^{\circ} = vs^{\circ}$. Therefore, we have
	\[ \sigma(w)^{\circ} = \sigma(vs)^{\circ} = \sigma(v) \pre{v}{\sigma}(s). \]
	This implies that $\sigma(v)^{\circ}$ and $\sigma(w)^{\circ}$ are connected by an edge (which is labeled by $\pre{v}{\sigma}(s)^{\pm 1}$, depending on orientation).
	
	Conversely, suppose $\sigma(v)$ and $\sigma(w)$ are connected by an edge. Then there exists $s' \in \Sigma'^{edge}_{\sigma(v)}$ such that $\sigma(w)^{\circ} = \sigma(v) s'^{\circ}$. Since $\pre{v}{\sigma} : \Sigma^{edge}_v \rightarrow \Sigma'^{edge}_{\sigma(v)}$ is bijective, there exists $s \in \Sigma^{edge}_v$ such that $\pre{v}{\sigma}(s) = s'$. We compute
	\[ \sigma(vs) = \sigma(v) \pre{v}{\sigma}(s) = \sigma(v) s', \]
	\[ \sigma(vs)^{\circ} = \sigma(v) s'^{\circ} = \sigma(w)^{\circ}. \]
	By Proposition \ref{prop:cubical.isometries.are.well.defined.on.vertices}, we conclude that $vs^{\circ} = w^{\circ}$ and thus $v^{\circ}$ and $w^{\circ}$ are connected by an edge. This implies that $\sigma$ induces a cubical isometry $g : X \rightarrow Y$ that sends $o$ to $o'$.
\end{proof}

Theorem \ref{thm:characterization.of.cubical.portraits} is the combined statements of Lemma \ref{lem:first.direction.of.main.theorem} and \ref{lem:converse.of.main.theorem}\\

One of the main uses of Theorem \ref{thm:characterization.of.cubical.portraits} is the construction of specific cubical isometries. When doing so, it is often convenient to construct the maps $\pre{v}{\sigma}$ for all {\bf reduced} words $v$, show that this collection of maps satisfies the five properties of cubical portraits, and use property $end$ to extend $(\pre{v}{\sigma})_{v \in \mathcal{L}_o^{red}}$ to a portrait $(\pre{v}{\sigma})_{v \in \mathcal{L}_o^{edge}}$. Since every vertex in $X$ is the endpoint of a reduced edge-path, property $end$ makes this extension unique. However, it is a-priori unclear whether this extension is still cubical. While properties $comm$, $par$, $inv$, and $end$ are very straight-forward to extend as mentioned in Remark \ref{rem:basics.on.cubical.conditions}, property $tree$ requires a bit more work. This is the content of the next Lemma.

\begin{lemma} \label{lem:extension.satisfies.tree}
	Let $(\pre{v}{\sigma})_{v \in \mathcal{L}_o^{red}}$ be a cubical portrait on $T_o^{red}$ to $\epsilon$ in $T'^{edge}_{o'}$. The unique extension to a portrait on $T_o^{edge}$ to $\epsilon$ in $T'^{edge}_{o'}$ that still satisfies $end$ also satisfies $tree$, $comm$, $par$, and $inv$.
\end{lemma}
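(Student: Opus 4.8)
The plan is as follows. Write $\pre{v}{\sigma}:=\pre{\bar v}{\sigma}$ for the extension, where $\bar v\in\mathcal L_o^{red}$ is any reduced word with $\bar v^\circ=v^\circ$ (one exists, as every vertex of $X$ is the endpoint of a combinatorial geodesic from $o$); since the given cubical portrait on $T_o^{red}$ satisfies $end$, this is independent of the choice of $\bar v$, so $\pre{v}{\sigma}$ depends only on $v^\circ$. Because $\Sigma_v^{edge}=\Sigma_{\bar v}^{edge}$, this makes the extension a well-defined portrait; it is the unique extension satisfying $end$, and condition $end$ for it is immediate.

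I would then dispatch $comm$, $par$, $inv$, which by Remark \ref{rem:basics.on.cubical.conditions} are semi-local, by reducing each instance to the corresponding instance of the original portrait at reduced words. For $comm$ at $v$ this is immediate since $\pre{v}{\sigma}=\pre{\bar v}{\sigma}$ and commutation of labels does not depend on the vertex. For $par$ and $inv$ at a pair $(v,vs)$ with $s\in\Sigma_v^{edge}$, let $\hat h$ be the hyperplane dual to the edge $v^\circ$--$vs^\circ$, and let $p_0\in\{v^\circ,vs^\circ\}$ be the endpoint on the same side of $\hat h$ as $o$; a reduced word $\rho$ from $o$ to $p_0$, extended by the $\hat h$-crossing edge, is still reduced and reaches the other endpoint. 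Reading off labels, $(v,vs)$ is then, up to which endpoint is which, either $(\rho,\rho a)$ with $a=s$ (when $v^\circ=p_0$) or $(\rho a,\rho)$ with $a=s^{-1}$ (when $vs^\circ=p_0$), with $\rho,\rho a\in\mathcal L_o^{red}$; the two identities follow from $par$, $inv$ of the original portrait at $(\rho,\rho a)$, using in the second case only that $\cdot^{-1}$ is an involution and $[s,t]=[s^{-1},t]$.

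The substantive part is $tree$, which I would prove by induction on $|v|$, the case $v=\epsilon$ being the normalization $\pre{\epsilon}{\sigma}(\Sigma_\epsilon^{edge})=\Sigma'^{edge}_\epsilon$ built into the definition of a portrait on a tree. For $v=v's$ with $|v'|=|v|-1$, the inductive hypothesis ($tree$ at $v'$) gives $\sigma(v')\in\mathcal L'^{edge}_{o'}$ and $\pre{v'}{\sigma}(s)\in\Sigma'^{edge}_{\sigma(v')}$, hence $\sigma(v)=\sigma(v')\,\pre{v'}{\sigma}(s)\in\mathcal L'^{edge}_{o'}$. Choosing a reduced $\bar v$ with $\bar v^\circ=v^\circ$, the original portrait's $tree$ condition yields $\pre{\bar v}{\sigma}(\Sigma_{\bar v}^{edge})=\Sigma'^{edge}_{\sigma(\bar v)}$ and $\sigma(\bar v)\in\mathcal L'^{edge}_{o'}$; since $\pre{v}{\sigma}=\pre{\bar v}{\sigma}$, $\Sigma_v^{edge}=\Sigma_{\bar v}^{edge}$, and $\Sigma'^{edge}_w$ depends only on $w^\circ$, the desired equality $\pre{v}{\sigma}(\Sigma_v^{edge})=\Sigma'^{edge}_{\sigma(v)}$ collapses to the single claim $\sigma(v)^\circ=\sigma(\bar v)^\circ$. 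To prove this I would connect $v$ and $\bar v$ (same endpoint) by a chain of moves in $\mathcal L_o^{edge}$ (Lemma \ref{lem:moves.for.edge.paths}) chosen so that it \emph{never increases length}: first reduce $v$ to a geodesic by repeatedly eliminating innermost cancellations (each elimination being a block of commutation moves followed by one length-decreasing cancellation move, via Lemma \ref{lem:innermost.cancellation.reduction}), then pass between the two resulting geodesics by commutation moves only. Since the extension already satisfies $comm$, $par$, $inv$, $end$, Lemma \ref{lem:moves.are.carried.over} transports this into a chain of moves of the same types from $\sigma(v)$ to $\sigma(\bar v)$, none of which introduces a new cancelling pair. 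A commutation move, or a backtrack-cancellation move, applied to a genuine edge-path in $Y$ again yields a genuine edge-path with the same endpoint (a routine check from properties (1)--(4) of the cubical edge-labeling on $Y$), so starting from $\sigma(v)\in\mathcal L'^{edge}_{o'}$ and inducting along the chain we obtain $\sigma(\bar v)\in\mathcal L'^{edge}_{o'}$ with $\sigma(\bar v)^\circ=\sigma(v)^\circ$, completing the step.

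The main obstacle is exactly this endpoint-matching $\sigma(v)^\circ=\sigma(\bar v)^\circ$, and the point requiring care is the insistence on a length-non-increasing chain of moves. Were the chain allowed to lengthen $v$ via a ``reverse'' cancellation move, the induced move on the $\sigma$-side would insert a pair $s'\,s'^{-1}$ whose legality as an edge-path step is itself an instance of condition $tree$ at a word not yet treated, and the induction would break. With the length bound in force, every word occurring on the $\sigma$-side stays a genuine edge-path emanating from $o'$, so the endpoints are forced to agree; the remaining verifications are routine bookkeeping.
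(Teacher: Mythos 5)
Your proposal is correct and follows essentially the same route as the paper's proof of Lemma \ref{lem:extension.satisfies.tree}: an induction over word length that compares each word with a reduced representative ending at the same vertex, transports the cancellation- and commutation-moves of Lemma \ref{lem:moves.for.edge.paths} through Lemma \ref{lem:moves.are.carried.over} (which needs only $comm$, $par$, $inv$, $end$) to match the endpoints $\sigma(v)^{\circ} = \sigma(\bar v)^{\circ}$, and then invokes property $tree$ of the given portrait on $T_o^{red}$. The only difference is that you make explicit two points the paper treats as immediate -- arranging the chain of moves to be length-non-increasing so that no insertion move requires an as-yet-unproved instance of $tree$, and the reversed-edge cases of $par$ and $inv$ via the involution and $[s,t]=[s^{-1},t]$ -- which is extra care within the same argument rather than a different approach.
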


\begin{proof}
	Let $(\pre{v}{\sigma})_{v \in \mathcal{L}_o^{edge}}$ be the extension. As discussed in Remark \ref{rem:basics.on.cubical.conditions}, $comm$, $par$, and $inv$ trivially generalise to the extension. We are left to show $tree$.

	\subsubsection*{Step 1: Show that for any $v \in \mathcal{L}_o^{edge}$, the word $\sigma(v)$ is contained in $\mathcal{L}'^{edge}_{o'}$.}
	Let $v \in \mathcal{L}_o^{edge}$. We use induction over the length of $v$. If $v = \epsilon$, then $\sigma(v) = \epsilon \in \mathcal{L}'^{edge}_{o'}$. Now suppose $\sigma(w) \in \mathcal{L}'^{edge}_{o'}$ for every $w$ of length $\leq n$ and let $v$ be of length $n+1$. We find a word $w \in \mathcal{L}^{edge}_o$ of length $n$ and $s \in \Sigma^{edge}_w$ such that $v = ws$. Let $w_{red}$ be a reduced word such that $w_{red}^{\circ} = w^{\circ}$ and thus $v^{\circ} = w_{red} s^{\circ}$. Clearly, the length of $w_{red}$ is at most the length of $w$ and thus $\leq n$. Therefore, $\sigma(w), \sigma(w_{red}) \in \mathcal{L}'^{edge}_{o'}$. Furthermore, since $w^{\circ} = w_{red}^{\circ}$, they are related by a finite sequence of the moves in Lemma \ref{lem:moves.for.edge.paths}. Since $(\pre{v}{\sigma})_{v \in \mathcal{L}_o^{edge}}$ satisfies $comm$, $par$, $inv$, and $edge$, we can apply Proposition \ref{prop:cubical.isometries.are.well.defined.on.vertices} and conclude that $\sigma(w)$ and $\sigma(w_{red})$ are related by a finite sequence of the same two moves. Since $\sigma(w), \sigma(w_{red}) \in \mathcal{L}'^{edge}_{o'}$ by induction-assumption, this implies that $\sigma(w)^{\circ} = \sigma(w_{red})^{\circ}$.
	
	By assumption, property $tree$ is satisfied on the subtree $T^{red}_o$ and thus $\pre{w_{red}}{\sigma}(s) \in \Sigma'^{edge}_{\sigma(w_{red})}$. Since $\sigma(w_{red})^{\circ} = \sigma(w)^{\circ}$, we have that $\Sigma'^{edge}_{\sigma(w_{red})} = \Sigma'^{edge}_{\sigma(w)}$ and we conclude that
	\[ \pre{w}{\sigma}(s) = \pre{w_{red}}{\sigma}(s) \in \Sigma'^{edge}_{\sigma(w_{red})} = \Sigma'^{edge}_{\sigma(w)}. \]
	We conclude that $\sigma(v) = \sigma(w) \pre{w}{\sigma}(s) \in \mathcal{L}'^{edge}_{o'}$. By induction, this finishes Step 1.
		
	\subsubsection*{Step 2: Show property $tree$.}
	Let $v \in \mathcal{L}^{edge}_o$ and $v_{red} \in \mathcal{L}^{red}_o$ such that $v^{\circ} = v_{red}^{\circ}$. By property $end$, we know that $\pre{v}{\sigma} = \pre{v_{red}}{\sigma}$. Since $v^{\circ} = v^{\circ}_{red}$ and $\sigma(v)^{\circ} = \sigma(v_{red})^{\circ}$, we know that $\Sigma_{v}^{edge} = \Sigma_{v_{red}}^{edge}$ and $\Sigma'^{edge}_{\sigma(v)} = \Sigma'^{edge}_{\sigma(v_{red})}$. Therefore,
	\[ \pre{v}{\sigma}( \Sigma_v^{edge} ) = \pre{v_{red}}{\sigma}( \Sigma_{v_{red}}^{edge} ) = \Sigma'^{edge}_{\sigma(v_{red})} = \Sigma'^{edge}_{\sigma(v)}. \]
	Therefore, $(\pre{v}{\sigma})_{v \in \mathcal{L}_o^{edge}}$ satisfies property $tree$.
\end{proof}

Lemma \ref{lem:extension.satisfies.tree} and Theorem \ref{thm:characterization.of.cubical.portraits} together imply Theorem \ref{thm:canonical.portrait.extension}. Therefore, cubical isometries $X \rightarrow Y$ can be constructed by producing cubical portraits on the tree of reduced words $T^{red}_o$.

\bibliography{mybib}

\begin{thebibliography}{MSSW23}

\bibitem[Ago13]{Agol13}
Ian Agol.
\newblock The virtual {H}aken conjecture.
\newblock {\em Doc. Math.}, 18:1045--1087, 2013.
\newblock With an appendix by Agol, Daniel Groves, and Jason Manning.

\bibitem[BBM21]{BelkBleakMatucci21}
James Belk, Collin Bleak, and Francesco Matucci.
\newblock Rational embeddings of hyperbolic groups.
\newblock {\em Journal of Combinatorial Algebra}, 5(2):123--183, 2021.

\bibitem[BFR19]{BaumslagFineRosenberger19}
Gilbert Baumslag, Benjamin Fine, and Gerhard Rosenberger.
\newblock One-relator groups: an overview.
\newblock In {\em Groups St Andrews 2017 in Birmingham. Selected papers of the
  conference, Birmingham, UK, August 5--13, 2017}, pages 119--157. Cambridge:
  Cambridge University Press, 2019.

\bibitem[BFS20]{BaderFurmanSauer20}
Uri Bader, Alex Furman, and Roman Sauer.
\newblock Lattice envelopes.
\newblock {\em Duke Math. J.}, 169(2):213--278, 2020.

\bibitem[BM00]{BurgerMozes00}
Marc Burger and Shahar Mozes.
\newblock Groups acting on trees: {From} local to global structure.
\newblock {\em Publ. Math., Inst. Hautes {\'E}tud. Sci.}, 92:113--150, 2000.

\bibitem[Cap14]{Caprace14}
Pierre-Emmanuel Caprace.
\newblock Automorphism groups of right-angled buildings: simplicity and local
  splittings.
\newblock {\em Fundam. Math.}, 224(1):17--51, 2014.

\bibitem[Cas23]{Castellano23}
Ilaria Castellano.
\newblock An introduction to totally disconnected locally compact groups and
  their finiteness conditions.
\newblock In {\em Modern trends in algebra and representation theory. Based on
  the LMS autumn algebra school 2020}, pages 335--369. Cambridge: Cambridge
  University Press, 2023.

\bibitem[CdM24]{CapracedeMedts24}
Pierre-Emmanuel Caprace and Tom de~Medts.
\newblock Lattice envelopes of right-angled artin groups.
\newblock {\em ArXiv:2401.15943}, 2024.

\bibitem[CRW17a]{CapraceReidWillis17a}
Pierre-Emmanuel Caprace, Colin~D. Reid, and George~A. Willis.
\newblock Locally normal subgroups of totally disconnected groups. {I}:
  {General} theory.
\newblock {\em Forum Math. Sigma}, 5:76, 2017.
\newblock Id/No e11.

\bibitem[CRW17b]{CapraceReidWillis17b}
Pierre-Emmanuel Caprace, Colin~D. Reid, and George~A. Willis.
\newblock Locally normal subgroups of totally disconnected groups. {II}:
  {Compactly} generated simple groups.
\newblock {\em Forum Math. Sigma}, 5:89, 2017.
\newblock Id/No e12.

\bibitem[CW04]{CrispWiest04}
John Crisp and Bert Wiest.
\newblock Embeddings of graph braid and surface groups in right-angled {Artin}
  groups and braid groups.
\newblock {\em Algebr. Geom. Topol.}, 4:439--472, 2004.

\bibitem[DMSS18]{MedtsSilvaStruyve18}
Tom De~Medts, Ana~C. Silva, and Koen Struyve.
\newblock Universal groups for right-angled buildings.
\newblock {\em Groups Geom. Dyn.}, 12(1):231--287, 2018.

\bibitem[Fio24]{Fioravanti24}
Elia Fioravanti.
\newblock Divisible cube complexes and finite-order automorphisms of raags,
  2024.

\bibitem[Fur67]{Furstenberg67}
H.~Furstenberg.
\newblock Poisson boundaries and envelopes of discrete groups.
\newblock {\em Bull. Am. Math. Soc.}, 73:350--356, 1967.

\bibitem[Gen21]{Genevois21}
Anthony Genevois.
\newblock Algebraic characterisation of relatively hyperbolic special groups.
\newblock {\em Isr. J. Math.}, 241(1):301--341, 2021.

\bibitem[HIM23a]{HartnickMedici23b}
Tobias Hartnick and Merlin Incerti-Medici.
\newblock Automorphism groups of cocompact $\mathrm{CAT(0)}$ cube complexes and
  simplicity.
\newblock {\em arXiv:2312.03336}, 2023.

\bibitem[HIM23b]{HartnickMedici23a}
Tobias Hartnick and Merlin Incerti-Medici.
\newblock Automorphisms of self-similar trees.
\newblock {\em arXiv:2312.03334}, 2023.

\bibitem[HP98]{HaglundPaulin98}
Fr\'ed\'eric Haglund and Fr\'ed\'eric Paulin.
\newblock Simplicit\'e de groupes d'automorphismes d'espaces \`a courbure
  n\'egative.
\newblock {\em The Epstein Birthday Schrift}, 1998.

\bibitem[HW07]{HaglundWise07}
Fr\'ed\'eric Haglund and Daniel~T. Wise.
\newblock Special cube complexes.
\newblock {\em Geometric and Functional Analysis}, 17(5):1551--1620, 2007.

\bibitem[KLW12]{KellerLenzWarzel12b}
Matthias Keller, Daniel Lenz, and Simone Warzel.
\newblock On the spectral theory of trees with finite cone type.
\newblock {\em Israel Journal of Mathematics}, 194(1):107--135, 2012.

\bibitem[KLW14]{KellerLenzWarzel14}
Matthias Keller, Daniel Lenz, and Simone Warzel.
\newblock An invitation to trees of finite cone type: random and deterministic
  operators.
\newblock {\em arXiv: Spectral Theory}, 2014.

\bibitem[Lyo90]{Lyons90}
Russell Lyons.
\newblock Random walks and percolation on trees.
\newblock {\em Ann. Probab.}, 18(3):931--958, 1990.

\bibitem[Mos73]{Mostow73}
G.~D. Mostow.
\newblock {\em Strong rigidity of locally symmetric spaces}, volume~78 of {\em
  Ann. Math. Stud.}
\newblock Princeton University Press, Princeton, NJ, 1973.

\bibitem[MSSW23]{MSSW23}
Alex Margolis, Sam Shepherd, Emily Stark, and Daniel Woodhouse.
\newblock Graphically discrete groups and rigidity, 2023.

\bibitem[NR98]{NibloReeves98}
G.A. Niblo and L.D. Reeves.
\newblock The geometry of cube complexes and the complexity of their
  fundamental groups.
\newblock {\em Topology}, 37(3):621--633, 1998.

\bibitem[NW02]{NagnibedaWoess02}
Tatiana Nagnibeda and Wolfgang Woess.
\newblock Random walks on trees with finitely many cone types.
\newblock {\em Journal of Theoretical Probability}, 15(2):383--422, 2002.

\bibitem[RS22]{ReidSmith22}
Colin~D. Reid and Simon~M. Smith.
\newblock Groups acting on trees with tits' independence property (p), 2022.

\bibitem[Tit70]{Tits70}
Jacques Tits.
\newblock Sur le groupe des automorphismes d'un arbre.
\newblock {\em Essays on Topology and Related Topics}, pages 188--211, 1970.

\bibitem[Wis21]{Wise21}
Daniel~T. Wise.
\newblock {\em The structure of groups with a quasiconvex hierarchy}.
\newblock Princeton University Press, 2021.

\end{thebibliography}
\bibliographystyle{alpha}

\end{document}